\documentclass[reqno, 10pt, letterpaper]{amsart}
\usepackage{amsfonts,amsmath, amsthm, amssymb, latexsym, epsfig}
\usepackage[all]{xy}
\usepackage{xspace}
\usepackage{comment}
\usepackage{setspace}
\usepackage{enumerate}
%\doublespacing
\usepackage[mathscr]{eucal}

\usepackage{bbold}

	\topmargin 30pt
	\advance \topmargin by -\headheight
	\advance \topmargin by -\headsep

	\textheight 8.2in
	
	\oddsidemargin 40pt
	\evensidemargin \oddsidemargin
	\marginparwidth 0.5in

	\textwidth 5.5in

	\newcommand{\catc}{\mathfrak{C}^{*}\text{-}\mathfrak{alg}}
	\newcommand{\ftn}[3]{ #1 \colon #2 \to #3 }
	\newcommand{\setof}[2]{\ensuremath{\left\{ #1 \: \mid \: #2 \right\}}}
	\newcommand{\tc}[1]{\overline{{\tt t}#1}}
	\newcommand{\Her}{\mathcal{H}}

	\newcommand{\kk}{\ensuremath{\mathit{KK}}\xspace}
	\newcommand{\multialg}[1]{\mathcal{M}(#1)\xspace}
	\newcommand{\corona}[1]{\mathcal{Q}(#1)\xspace}
	\newcommand{\Z}{\ensuremath{\mathbb{Z}}\xspace}
	\newcommand{\C}{\ensuremath{\mathbb{C}}\xspace}

	\newcommand{\N}{\ensuremath{\mathbb{N}}\xspace}
	
	\newcommand{\K}{\ensuremath{\mathbb{K}}\xspace}
	\newcommand{\id}{\ensuremath{\operatorname{id}}}
	
	\newcommand{\ouri}{{\mathbb I}}	
	\newcommand{\Prim}{\operatorname{Prim}}
	\newcommand{\primT}{\Prim^\tau}	
	\newcommand{\primTS}{\Prim^{\tau,\Sigma}}

	\theoremstyle{plain}
	\newtheorem{thm}{Theorem}[section]
	\newtheorem{lemma}[thm]{Lemma}
	\newtheorem{theor}[thm]{Theorem}
	\newtheorem{propo}[thm]{Proposition}
	\newtheorem{corol}[thm]{Corollary}

	\theoremstyle{definition}
	\newtheorem{defin}[thm]{Definition}
	
	\newtheorem{remar}[thm]{Remark}
	
	\newtheorem{examp}[thm]{Example}

	\numberwithin{equation}{section}
	\numberwithin{figure}{section}

\newcommand{\FK}{FK}

\begin{document}
	\title{Amplified graph $C^{*}$-algebras}
	\author{S{\o}ren Eilers}
        \address{Department of Mathematical Sciences \\
        University of Copenhagen\\
        Universitetsparken~5 \\
        DK-2100 Copenhagen, Denmark}

              \email{eilers@math.ku.dk }

	\author{Efren Ruiz}
        \address{Department of Mathematics\\University of Hawaii,
Hilo\\200 W. Kawili St.\\
Hilo, Hawaii\\
96720-4091 USA}
        \email{ruize@hawaii.edu}

        \author{Adam P.W.~S{\o}rensen}
        \address{Department of Mathematical Sciences \\
        University of Copenhagen\\
        Universitetsparken~5 \\
        DK-2100 Copenhagen, Denmark}

        \email{apws@math.ku.dk}

        \date{\today}
	
%AMS info

	\keywords{Classification, Extensions, Graph algebras}
	\subjclass[2000]{Primary: 46L35, 37B10 Secondary: 46M15, 46M18}

        \begin{abstract}
        We provide a complete invariant for graph $C^{*}$-algebras which are amplified in
the sense that whenever there is an edge between two vertices, there are infinitely
many. The invariant used is the standard primitive ideal space adorned with a map
into $\{-1,0,1,2,\dots\}$, and we prove that the classification result is strong in
the sense that isomorphisms at the level of the invariant always lift. We
extend the classification result to cover more graphs, and give a range result for
the invariant (in the vein of Effros-Handelman-Shen) which is further used to prove
that extensions of graph $C^*$-algebras associated to amplified graphs are again
graph $C^*$-algebras of amplified graph.
        \end{abstract}

        \maketitle
        
%%%%%%%%%%%%%%%%%%%%%%%
%%%%%%%%%%%%%%%%%%%%%%%    
\section{Introduction}
%%%%%%%%%%%%%%%%%%%%%%%
%%%%%%%%%%%%%%%%%%%%%%%

When classifying $C^*$-algebras we usually consider some subcategory, $\mathcal{C}$
say, of all $C^*$-algebras. 
We then hope to find a functor $\mathcal{F}$ from $\mathcal{C}$ or from the category
of all $C^*$-algebras to some other category, $\mathcal{D}$ say, with the property
that 
\[
        C_1 \cong C_2 \iff \mathcal{F}(C_1) \cong \mathcal{F}(C_2).
\]
Hopefully it is easy to determine if two objects in $\mathcal{D}$ are isomorphic.
If one somehow comes to be in possession of such a classifying functor, there are several natural questions to ask. 
Is our functor a strong classifying functor?
That is, given some isomorphism $\phi \colon \mathcal{F}(C_1) \to \mathcal{F}(C_2)$ can we find an isomorphism $\psi \colon C_1 \to C_2$ such that $\mathcal{F}(\psi) = \phi$?
What is the range of $\mathcal{F}$?
If the domain of $\mathcal{F}$ is all $C^*$-algebras, then we can ask under what conditions $\mathcal{F}(\mathfrak{A}) \in \mathcal{F}(\mathcal{C})$ guarantees that $\mathfrak{A} \in \mathcal{C}$.  

There are many examples of such functors. 
The best known is perhaps the one that sends a $C^*$-algebra $\mathfrak{A}$ to its primitive ideal space, denoted $\operatorname{Prim}(\mathfrak{A})$. 
Restricting to the category of commutative $C^*$-algebras we obtain a classifying functor, since in this case we may apply Gelfand duality, and if one restricts further to unital commutative $C^*$-algebras then this is even a strong classifying functor. 
It is also well known that the range of the functor is all locally compact Hausdorff spaces, but apart from the obvious fact that when $\operatorname{Prim} (\mathfrak{A})$ is not Hausdorff, then $\mathfrak{A}$ is not commutative, there is no obvious way to recognize the commutative $C^*$-algebras by looking at primitive ideal spaces.

Among other well known classifying functors, we have the ordered $K_0$ (with unit or scale) which classifies AF-algebras, and the graded $K_0\oplus K_1$, which classifies purely infinite simple $C^*$-algebras which are nuclear and fall in the UCT class.
These are strong classification results, the ranges of the invariants are known, and in the former case quite a lot is known about how to recognize the classified $C^*$-algebra in larger classes by looking at their $K$-theory.

In \cite{ERRlinear} it was boldly conjectured that the ideal related $K$-theory $\FK(-)$, is a classifying (up to stable isomorphism) functor for graph $C^*$-algebras with finitely many ideals. 
Supporting evidence for this conjecture can, for instance, be found in \cite{semt_classgraphalg, ERRlinear, segrer:ccfis}, where graph $C^*$-algebras with small or otherwise special ideal lattices are classified using $\FK(-)$.
Although we are apparantly still a long way away from resolving whether $\FK(-)$ is a classifying (up to stable isomorphism) functor for graph $C^*$-algebras, the conjecture raises the following additional questions:
\begin{enumerate}[A]
        \item Is $\FK(-)$ a classifying functor?
                \begin{enumerate}[1]
                        \item Is it a strong classifying functor?
                        \item Can we achieve exact classification? 
                        \item What is the range of $\FK$?
                \end{enumerate}
        \item Which relation on graphs is induced by stable isomorphism?
        \item Are the graph algebras recognizable within larger classes of C*-algebras by $\FK(-)$?
				\begin{enumerate}[1]
                         \item Is it possible to achieve permanence results for extensions of graph algebras?
                \end{enumerate}
\end{enumerate}
These questions and their answers affect one another.
We have tried to capture these connections in the following diagram.

\begin{center}
\includegraphics[width=7cm]{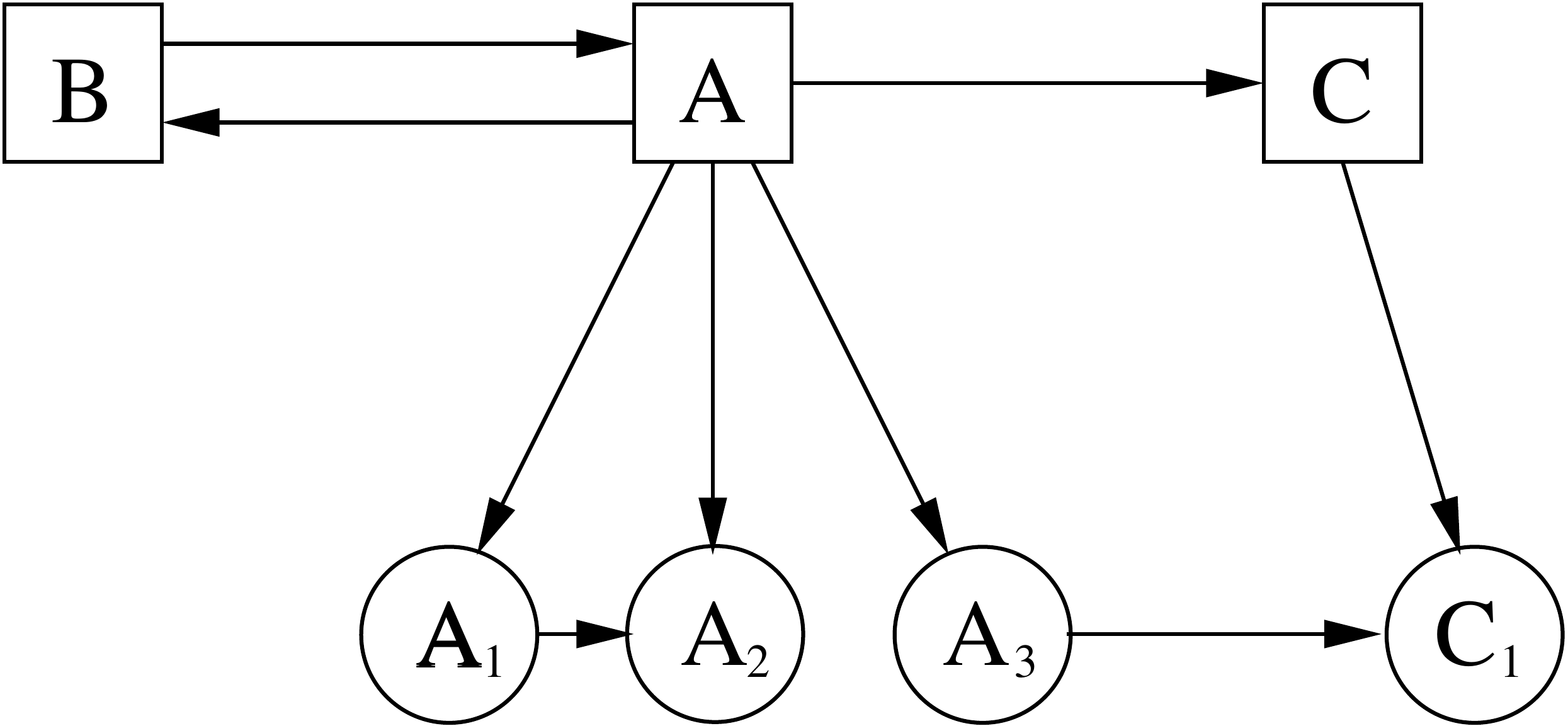}
\end{center}

In the class of graph $C^*$-algebras, results pertaining to $A$ have been given in \cite{semt_classgraphalg}, \cite{segrer:ccfis}, and \cite{ERRlinear}, and more will appear in \cite{segrer:scecc}. 
The issue $A_1$ is the subject of \cite{segr:rccconi} as well as \cite{grer:rccconiII}, \cite{segrer:scecc}, and all of these papers along with \cite{segrer:okfe} adress issue $A_2$.
$A_3$ is the subject of \cite{EKTW} as well as forthcoming work in \cite{ABK}.
The question $B$ is resolved for simple unital graph $C^*$-algebras in \cite{apwsalafranks}.
And results of relevance for $C$ and $C_1$ have been obtained in \cite{EKTW}, \cite{segrer:okfe} and \cite{ABK}.

In the present paper we resolve all the questions $A$, $B$ and $C$ for a special class of graph algebras -- imposing this time, indirectly, a requirement on the involved $K$-groups instead of on the ideal lattice.  
Instead of working directly with $\FK(-)$, we introduce, for any $C^*$-algebra, the {\bf tempered primitive ideal space}.
It turns out to be a complete invariant for the algebras we wish to study.  
This invariant, which we denote by $\primT(\mathfrak{A})$, consists of the standard primitive ideal space $\Prim(\mathfrak{A})$ along with a map $\tau:\Prim(\mathfrak{A})\to \Z\cup\{-\infty,\infty\}$ which describes the nature of the $K_0$-groups of certain sub-quotients of $\mathfrak{A}$.
A formal definition is given in Section \ref{sec:tempideal}. 

In terms of graphs, what we want to consider are graphs with finitely many vertices and the special property that if there is an edge between two vertices, then there are infinitely many edges between them.  
We call such graphs amplified. 
Naturally any vertex in such a graph is either an infinite emitter or a sink, and so the $K_0$-group is easily computed.
It is simply the free abelian group with as many generators as our graph has vertices.
Furthermore such graphs always satisfies the technical condition $(K)$ so the ideal structure of a graph algebra is readily understood from the path structure of the graph.  

An important concept for us is the \emph{transitive closure} of a graph $G$, defined in the case of $G$ by adding an edge $e$ with $s(e)=v$ and $r(e)=w$ to the graph if no such edge exists, but there is a path from $v$ to $w$ in $G$.
We denote this graph by ${\tt t} G$. 
We also need the \emph{amplification} of a graph $G$, defined by adding countably infinite number of edges from $v$ to $w$ if there exists an edge in $G$ from $v$ to $w$.
We denote this graph by $\overline{G}$. 

We can now state one of the key results of our paper. 

\begin{theor}
Let $G$ and $F$ be graphs with finitely many vertices. 
The following are equivalent.
\begin{itemize}
\item[(i)] $\tc{G} \cong \tc{F}$.

\item[(ii)] $C^{*} ( \tc{G} ) \cong C^{*} ( \tc{F} )$.

\item[(iii)] $C^{*} ( \overline{G} ) \cong C^{*} (\overline{F} )$.

\item[(iv)] $C^{*} ( \overline{G} ) \otimes \K \cong C^{*} ( \overline{F} ) \otimes \K $.

\item[(v)] $\primT( C^{*} ( \overline{G} ) ) \cong \primT( C^{*} ( \overline{F} )  )$.

\item[(vi)] $\FK( C^{*} ( \overline{G} ) ) \cong \FK( C^{*} ( \overline{F} )  )$.
\end{itemize}
\end{theor}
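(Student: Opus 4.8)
The plan is to close the cycle of implications $(i)\Rightarrow(ii)\Rightarrow(iii)\Rightarrow(iv)\Rightarrow(vi)\Rightarrow(v)\Rightarrow(i)$, arranged so that every link except two is either purely formal or rests on established facts. Here $(i)\Rightarrow(ii)$ is functoriality of $C^{*}(-)$ on graph isomorphisms, $(iii)\Rightarrow(iv)$ is tensoring by $\K$, and $(iv)\Rightarrow(vi)$ is the invariance of $\FK(-)$ under stable isomorphism. For $(vi)\Rightarrow(v)$ I would argue that $\primT$ is recoverable from $\FK$ on this class: an amplified graph satisfies condition $(K)$ and has no breaking vertices, so $\FK(C^{*}(\overline{G}))$ knows the lattice of all ideals of $C^{*}(\overline{G})$, hence determines $\Prim(C^{*}(\overline{G}))$ as a topological space (a finite $T_{0}$ space is determined by its specialization order), and $\FK$ also records the \emph{ordered} $K_{0}$-groups of all subquotients, which is exactly the data from which the map $\tau$ is assembled.

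For $(ii)\Rightarrow(iii)$ I would isolate, as a lemma, that $C^{*}(\overline{H})\cong C^{*}(\tc{H})$ for every graph $H$ with finitely many vertices; granting this, $(ii)$ gives $C^{*}(\overline{G})\cong C^{*}(\tc{G})\cong C^{*}(\tc{F})\cong C^{*}(\overline{F})$. The lemma should be proved by realizing the passage from $\overline{H}$ to its transitive closure as a finite composition of elementary moves on graphs, each inducing an actual \emph{isomorphism}, not merely a stable isomorphism, precisely because in an amplified graph every vertex is a sink or an infinite emitter; the conceptual reason it must hold is that transitive closure alters neither the reachability preorder on vertices nor the relevant subquotients, so in fact $\primT(C^{*}(\overline{H}))=\primT(C^{*}(\tc{H}))$. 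With the cycle closed, $(iv)\Rightarrow(iii)$ is then automatic: a stable isomorphism forces $\tc{G}\cong\tc{F}$, and any graph isomorphism $\tc{G}\to\tc{F}$ carries the sum of all vertex projections to the sum of all vertex projections, so induces a \emph{unital} isomorphism $C^{*}(\tc{G})\to C^{*}(\tc{F})$, and the lemma then yields $C^{*}(\overline{G})\cong C^{*}(\overline{F})$.

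The heart of the matter is $(v)\Rightarrow(i)$, which I would obtain by computing $\primT(C^{*}(\overline{G}))$ explicitly. For a condition-$(K)$ amplified graph without breaking vertices, the ideals of $C^{*}(\overline{G})$ correspond to hereditary subsets of the vertex set and the primitive ideals to the maximal tails; since there are finitely many vertices, every maximal tail is the set of vertices that reach a fixed strongly connected component $C$ of $G$ (a vertex lying on no cycle being its own singleton component). Thus $\Prim(C^{*}(\overline{G}))$, as a topological space, is the poset of strongly connected components of $G$ ordered by reachability. Moreover no vertex of $\overline{G}$ is regular, so $K_{1}$ vanishes and $K_{0}$ is free on the vertices, both for $C^{*}(\overline{G})$ and for every subquotient; in particular the minimal simple subquotient at the primitive ideal $\mathfrak{p}_{C}$ is stably isomorphic to $C^{*}$ of the restriction of $\overline{G}$ to $C$, which is $\C$ when $C$ is a non-cyclic singleton and a purely infinite simple algebra with $K_{0}\cong\Z^{|C|}$ otherwise. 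Reading the definition of $\tau$ against this, $\tau(\mathfrak{p}_{C})$ depends only on $|C|$ and on whether $C$ is a cycle component, and it determines both. Finally, the triple consisting of the reachability poset of components, the function $C\mapsto|C|$, and the indicator of which components are cycle components is exactly the combinatorial datum that reconstitutes $\tc{G}$ up to graph isomorphism --- a complete amplified digraph with loops on each cycle component, no edges inside each non-cyclic singleton, and all edges from $C$ to $C'$ whenever $C$ reaches $C'$. Hence an isomorphism $\primT(C^{*}(\overline{G}))\cong\primT(C^{*}(\overline{F}))$ is the same thing as an isomorphism of these labelled posets, that is, a graph isomorphism $\tc{G}\cong\tc{F}$; and tracing through the construction shows that this graph isomorphism can be chosen to lift the given isomorphism of invariants, which is the promised strengthening of the classification.

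The two steps requiring genuine work are the lemma $C^{*}(\overline{H})\cong C^{*}(\tc{H})$ --- where the delicate point is producing an honest isomorphism rather than only a Morita equivalence --- and pinning down the precise value of $\tau$ on the minimal simple subquotients together with the verification that it separates cycle components of distinct sizes from one another and from the non-cyclic singletons; everything else is functoriality, standard invariance properties of $\FK$, and the usual description of the ideal structure of condition-$(K)$ graph algebras. I expect this faithful passage from $\primT(C^{*}(\overline{G}))$ back to $\tc{G}$ to be the crux, since it is precisely where the particular choice of invariant must prove its worth.
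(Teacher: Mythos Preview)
Your proposal is correct and follows essentially the same route as the paper. The paper proves the same cycle of implications (with the roles of (v) and (vi) swapped in the body relative to the introduction, but the content is identical): it isolates $C^{*}(\overline{H})\cong C^{*}(\tc{H})$ as Corollary~\ref{transitiveClosure}, obtained from a sequence of elementary graph moves (Lemma~\ref{addEdgesFinite} through Theorem~\ref{moveT}), and then proves $\primT\Rightarrow$ graph isomorphism as Proposition~\ref{p:invtrancls} by exactly the mechanism you describe --- your ``strongly connected components ordered by reachability'' are the paper's sets $\langle u\rangle_{\tc{G}}$ and the associated $\iota_{\tc{G}}(u)$, and your computation of $\tau$ on the simple subquotients is Lemma~\ref{simplesubq} together with the $\mathcal{O}_\infty$-versus-$\C$ dichotomy in the $|\langle u\rangle|=1$ case. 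Your remark that the graph isomorphism can be chosen to lift the given isomorphism of invariants is recorded separately in the paper as Lemma~\ref{l:strongtc} and Proposition~\ref{p:strongclassamp}.
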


Aside from showing that $\FK(-)$ is a classifying functor for amplified graph algebras, the result also gives a concrete geometric description of when two amplified graphs gives rise to isomorphic algebras.  

We provide various extensions of this result; we extend to cover classification of (some) graphs where all vertices are singularities and prove an Effros-Handelman-Shen type theorem for the range of the invariant.
This is then used to show permanence properties for certain graph $C^{*}$-algebras, hence answering questions $C$ and $C_1$. 

%%%%%%%%%%%%%%%%%%%%%%%
%%%%%%%%%%%%%%%%%%%%%%%
\section{Graphs and their algebras}
%%%%%%%%%%%%%%%%%%%%%%%
%%%%%%%%%%%%%%%%%%%%%%%

Across the literature on graph algebras there is some inconsistency about how to turn the arrows when defining the graph algebra. 
We go with the notation from, for instance, \cite{ddmt_arbgraph}. 

\begin{defin}
Let $G = (G^0,G^1,s_{G},r_{G})$ be a graph. A Cuntz-Krieger $G$-family is a set of mutually orthogonal projections $\{ p_v \mid v \in G^0 \}$ and a set $\{ s_e \mid e \in G^1 \}$ of partial isometries satisfying the following conditions:
\begin{itemize}
	\item[(CK0)] $s_e^* s_f = 0$ if $e,f \in G^1$ and $e \neq f$,
	\item[(CK1)] $s_e^* s_e = p_{r_{G}(e)}$ for all $e \in G^1$,
	\item[(CK2)] $s_e s_e^* \leq p_{s_{G}(e)}$ for all $e \in G^1$, and,
	\item[(CK3)] $p_v = \sum_{e \in s_{G}^{-1}(v)} s_e s_e^*$ for all $v \in G^0$ with $0 < |s_{G}^{-1}(v)| < \infty$.
\end{itemize}
The graph algebra $C^*(G)$ is defined as the universal $C^*$-algebra given by these generators and relations.
\end{defin}

We now define a few graph concepts. 

\begin{defin}
Let $G$ be graph and let $u,v$ be vertices in $G$. 
We write $u \geq v$ if there is a path from $u$ to $v$ in $G$ or if $u = v$.
\end{defin}

\begin{defin}
Let $G$ be a graph.
A subset $H \subseteq G^0$ is called \emph{hereditary} if for all $u \in H$ we have 
\[
	u \geq v \implies v \in H.
\]
We denote by $\mathcal{H}(G)$ the lattice of hereditary sets in $G^0$. 
\end{defin}

Of particular importance to us is the amplification of a graph.

\begin{defin}
Let $G$ be a graph.
The \emph{amplification of $G$}, denoted by $\overline{G}$, is defined by $\overline{G}^{0} = G^{0}$, 
\begin{align*}
\overline{G}^{1} = \setof{ e(v,w)^{n} }{ \text{$n \in \N$, $v, w \in G^{0}$ and there exists an edge from $v$ to $w$} }, 
\end{align*}
and $s_{ \overline{G} } ( e(v,w)^{n} ) = v$, and $r_{\overline{G}} ( e(v,w)^{n} ) = w$. 

If $E = \overline{G}$ for some graph $G$ we say that $E$ is an \emph{amplified graph}.
\end{defin}

The ideal structure of graph algebras is well understood.
For amplified graphs it is especially nice.  
Since all sets of vertices automatically are saturated and the graphs always satisfy the technical condition (K), we have an isomorphism between $\mathcal{H}(\overline{G})$ and the ideals of $C^*(\overline{G})$. 
See \cite{bhrs:idealstructure} for details. 
This connection between the path structure of $\overline{G}$ and the ideals of the associated algebra motivates our next definition. 

\begin{defin}
Let $G = ( G^{0}, G^{1} , r_{G} , s_{G} )$ be a graph.
Define ${\tt t}G$ as follows:
\begin{align*}
	{\tt t}G^{0} &= G^{0}, \\
	{\tt t}G^{1} &= G^1 \cup \setof{ e(v, w ) }{ \text{there is a path but no edge from } v \text{ to } w },
\end{align*}
with range and source maps that extend those of $G$ and satisfies
\begin{align*}
s_{ {\tt t}G} ( e(v,w) ) &= v,   \\
r_{ {\tt t}G } ( e(v,w) ) &= w.
\end{align*}
\end{defin}

The idea is that in ${\tt t}G$ the relations ``there is a path between'' and ``there is an edge between'' becomes the same. 
Of course adding one edge is fairly arbitrary. 
A more natural choice would perhaps be to add as many edges from $u$ to $v$ as there paths from $u$ to $v$. 
But with that choice ${\tt t}{\tt t}G$ might not be the same as ${\tt t}G$.
We will almost only use the transitive closure together with the amplification, thus this choice is irrelevant. 

There is one final class of graphs that will be important for us.

\begin{defin}
A graph $G$ is called \emph{singular} if every vertex in $G$ is either an infinite emitter or a sink. 
\end{defin}

\begin{remar}
An amplified graph is obviously singular. 
And there are non-amplified singular graphs, cf.\ Example \ref{nonsing}.
\end{remar}

%%%%%%%%%%%%%%%%%%%%%%%
%%%%%%%%%%%%%%%%%%%%%%%
\section{A move on graphs} \label{sec:move}
%%%%%%%%%%%%%%%%%%%%%%%
%%%%%%%%%%%%%%%%%%%%%%%

In this section we will describe a simple way to alter an amplified graph without changing the isomorphism class of the associated algebra.

\begin{lemma} \label{addEdgesFinite}
Let $G$ be a graph, and let $u \in G^0$ be some vertex that emits infinitely many edges to some finite emitter $v$ in $G^0$. Let $E$ be the graph with vertex set $G^0$, edge set
 \[
	E^1 = G^1 \cup \{ f^n \mid n \in \N, f \in s_{G}^{-1}(v) \},
\]
and range and source maps that extend those of $G$ and have $r_{E}(f^n) = r_{G}(f)$ and $s_{E}(f^n) = u$. Then  $C^*(G) \cong C^*(E)$.
 \end{lemma}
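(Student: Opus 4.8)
The strategy is to exhibit an explicit isomorphism by constructing a Cuntz--Krieger $E$-family inside $C^*(G)$ and, conversely, a Cuntz--Krieger $G$-family inside $C^*(E)$, and then invoke the universal property in each direction. Write $\{p_v, s_e\}$ for the canonical generators of $C^*(G)$ and $\{q_v, t_e\}$ for those of $C^*(E)$. The key geometric observation is that adding the edges $f^n$ (copies of the edges out of $v$, but now emitted by $u$) does not disturb any Cuntz--Krieger relation, because the only vertices affected by the change are $u$ and $v$, and both remain infinite emitters in $E$ exactly when they are in $G$ --- in particular (CK3) is vacuous at $u$ and at $v$ in both graphs, so no summation constraint is ever imposed.

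For the forward direction I would enumerate the infinitely many edges from $u$ to $v$ in $G$ as $\{g_n\}_{n \in \N}$, and split this set into countably many disjoint countably infinite subsets, one ``reserved'' copy $\{g^{(e)}_n\}_n$ for each edge $e \in s_G^{-1}(v)$, plus still-infinitely-many left over, say $\{h_n\}_n$. For $e \in s_G^{-1}(v)$ and $n \in \N$ set $S_{f^n} := \bigl(\sum_n s_{g^{(e)}_n} s_{g^{(e)}_n}^*\bigr)$-type partial isometries --- more precisely, pick an isometry-valued reindexing that absorbs one copy of $p_v$: since $u$ emits infinitely many edges to $v$, the projection $\sum_n s_{g^{(e)}_n}s_{g^{(e)}_n}^*$ is a subprojection of $p_u$ that is Murray--von Neumann equivalent to $p_v$ via a partial isometry $W_e$ with $W_e^*W_e = p_v$ and $W_eW_e^* \le p_u$; then define $S_{f^n} := W_e \, s_e$ when $f = e$, reindexing $n$ appropriately, while keeping $S_e := s_e$ for all original edges $e \in G^1$ and $P_v := p_v$. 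One checks (CK0)--(CK2) directly from the relations in $C^*(G)$, and (CK3) only needs checking at finite emitters $w \ne u$; for such $w$ the edge set $s_E^{-1}(w) = s_G^{-1}(w)$ is unchanged, so the relation transfers verbatim. This yields a $*$-homomorphism $\Phi : C^*(E) \to C^*(G)$.

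For the reverse direction the map is easier: the canonical $G$-family $\{p_v, s_e\}$ already sits inside $C^*(E)$ as $\{q_v, t_e\}$ (forgetting the new edges $t_{f^n}$), and one must only verify that the original (CK3) relations still hold in $C^*(E)$ --- again automatic since the only finite-emitter relations involve vertices $w$ with $s_E^{-1}(w) = s_G^{-1}(w)$. This gives $\Psi : C^*(G) \to C^*(E)$. Finally I would show $\Phi$ and $\Psi$ are mutually inverse by checking they act as identity on generators up to the reindexing bookkeeping, which forces surjectivity of both maps (the image of $\Phi$ contains all $s_e$ hence all of $C^*(G)$ since $p_u = \sum$ over $s_G^{-1}(u)$ is not needed --- wait, $u$ is an infinite emitter so we instead note $p_u$ is recovered as a generator directly in the target). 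The main obstacle is the careful bookkeeping in the forward direction: one must partition the infinitely many $u \to v$ edges so that the ``borrowed'' copies used to manufacture the $S_{f^n}$ are disjoint from each other and still leave the family satisfying (CK0), and simultaneously ensure nothing over-counts at $v$ (which is harmless precisely because $v$, being a finite emitter in $G$ but with $u$ emitting infinitely to it, becomes an infinite emitter in $E$ --- so (CK3) at $v$ is dropped on the $E$ side). Verifying this compatibility of the two universal properties is the only real content; everything else is relation-chasing.
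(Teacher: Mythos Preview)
Your forward construction is roughly on the right track and close to what the paper does (the paper enumerates the $u\to v$ edges as $\{e_n\}$, sets $t_{e_n}=s_{e_{2n-1}}$ and $t_{f^n}=s_{e_{2n}}s_f$, and keeps all other generators fixed), but there is a contradiction in your description: you cannot both ``keep $S_e:=s_e$ for all original edges $e\in G^1$'' \emph{and} reserve some of the $u\to v$ edges to manufacture the $S_{f^n}$, since (CK0) would then fail between $S_{g_n}$ and $S_{f^m}$. You must reindex the $u\to v$ edges, as the paper does. Also, your parenthetical that $v$ ``becomes an infinite emitter in $E$'' is wrong: the new edges $f^n$ have source $u$, so $s_E^{-1}(v)=s_G^{-1}(v)$ and (CK3) at $v$ is identical in both graphs.

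The genuine gap is your plan to prove the isomorphism by exhibiting $\Psi$ as a two-sided inverse of $\Phi$. With $\Psi$ the naive inclusion $s_e\mapsto t_e$, the composite $\Psi\circ\Phi$ sends $t_{e_n}\mapsto t_{e_{2n-1}}$, which is not the identity; and $\Phi\circ\Psi$ sends $s_{e_n}\mapsto s_{e_{2n-1}}$, whose image misses all the even-indexed $s_{e_{2k}}$ and hence is not surjective. So $\Psi$ is an embedding but not an inverse, and no amount of ``reindexing bookkeeping'' will fix this. The paper instead argues the two properties of $\Phi$ separately. Surjectivity uses precisely the finite-emitter hypothesis on $v$: one recovers $s_{e_{2n}}=\sum_{f\in s_G^{-1}(v)} t_{f^n}t_f^{*}$ as a \emph{finite} sum via (CK3) at $v$. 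Injectivity is obtained by the gauge-invariant uniqueness theorem, after constructing a modified $\mathbb{T}$-action $\alpha$ on $C^*(G)$ (leaving the $s_{e_{2n}}$ fixed and rotating the rest) that intertwines with the standard gauge action on $C^*(E)$ through $\Phi$. Your proposal never invokes gauge-invariant uniqueness, and without it there is no mechanism in your outline to establish injectivity.
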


In the lemma if $G$ looks like:
\[
	\xymatrix{ 
 	 	& & \bullet \\
          	 	u \ar[r]^{\infty} & v \ar@<-0.1em>[ur]\ar@<0.1em>[ur] \ar[dr] & \\
 	 	& & \bullet \\ 
	}
\]
then $E$ will look like:
\[
	\xymatrix{ 
 	 	& & \bullet \\
          	 	u \ar[r]^{\infty} \ar@/^1em/[urr]^{\infty} \ar@/_1em/[drr]_{\infty} & v \ar@<-0.1em>[ur]\ar@<0.1em>[ur] \ar[dr] & \\
 	 	& & \bullet \\ 
	}
\]

\begin{proof}
Let $\{p_v, s_e \mid v \in G^0, e \in G^1\}$ be the Cuntz-Krieger $G$-family generating $C^*(G)$.
Let 
\[
	\{ e_1, e_2, \ldots \} = \{ e \in G^1 \mid s_{G}(e) = u, r_{G}(e) = v \}.
\]
For each edge $e \in G^1 \setminus \{e_n \mid n \in \N\}$ we let $t_e = s_e$.
For each $e_n$ we let $t_{e_n} = s_{e_{2n-1}}$. 
For each  $f^n$ we let $t_{f^n} = s_{e_{2n}}s_f$. 

We claim that $\{ p_v, t_e \mid v \in E^0, e \in E^1 \}$ is a Cuntz-Krieger $E$-family in $C^*(G)$. 
First we check that all the $t_e$, $e \in E^1$, have orthogonal ranges.
Let $e,f \in G^1$. If neither $e$ nor $f$ has source $u$ we have $t_e = s_e$ and $t_f = s_f$, so they have orthogonal ranges.
Suppose now that $s_{G}(e) = u$ and $s_{G}(f) \neq u$.
Then we can write $t_e = s_g x$ where $g$ is an edge with $s_{G}(g) = u$ and $x$ is some element in $C^*(G)$.
Thus we have 
\[
	t_e^* t_f = x^* s_g^* s_f = 0,
\]
the last equility holds since $s_{G}(f) \neq u$ so $g \neq f$.
From this we also get $t_f^* t_e = (t_e^* t_f)^* = 0$.
We now consider the case where both $e$ and $f$ have source $u$.
The only case which is different from before, is if $e = g^n$ and $f = h^m$ for some edges $h,g \in s_{G}^{-1}(v)$ and $n,m \in  \N$.
In this case we have 
\begin{align*}
	t_e^* t_f 	&=  s_g^* s_{e_ {2n}}^* s_{e_{2m}} s_h = \delta_{n,m} s_g^* p_v s_f^* \\
			&= \delta_{n,m} s_g^* s_f = \delta_{n,m} \delta_{g,f} p_{r_{G}(g)} \\
			&= \delta_{n,m} \delta_{g,f} p_{r_{E}(e)} ,
\end{align*}
which gives the desired result.

Note that by the above computations the relation $t_e^* t_e = p_{r_{E}(e)}$ holds for all $e \in E^1$.
The sum-relation $(CK3)$ holds at all vertices, since the only vertex, where we changed the out going edges (and the corresponding partial isometries) is an infinite emitter. 
So it only remains to verify that $t_e t_e^* \leq p_{s_{E}(e)}$ for all $e \in E^1$.
This is easily seen to be true unless $e = f^n$ for some $f \in s_{G}^{-1}(v)$ and $n \in \N$.
But even in this case we see that 
\[
	t_e t_e^* = s_{e_{2n}} s_f s_f^* s_{e_{2n}}^* \leq s_{e_{2n}} s_{e_{2n}}^* \leq p_{s_{G}(e_{2n})} = p_u = p_{s_{E}(e)}.
\]	
Hence $\{p_v, t_e\}$ is a Cuntz-Krieger $E$-family.

By universality we get a $*$-homomorphism $\phi \colon C^*(E) \to C^*(G)$.
We claim that $\phi$ is an isomorphism. 
The only generators of $C^*(G)$ that are not trivially in $\phi(C^*(E))$ are $s_{e_{2n}}$, $n  \in \N$. 
To see that these generators are in the image we fix some $n \in \N$. 
For each $f \in s_{G}^{-1}(v)$ we have 
\[
	t_{f^n} t_{f}^* = s_{e_{2n}} s_f s_f^*.
\]
Since $v$ is a finite emitter, we get
\[
	\sum_{f \in s_{G}^{-1}(v)} t_{f^n} t_{f}^* = s_{e_{2n}} \left( \sum_{f \in s_{G}^{-1}(v)} s_f s_f^* \right) = s_{e_{2n}} p_{v} = s_{e_{2n}}.
\]
So $e_{s_{2n}}$ is in the image of $\phi$, and hence $\phi$ is surjective.

We now turn to injectivity. 
We will define a strongly continuous action $\alpha$ of $\mathbb{T}$ on $C^*(G)$. 
Let $H = G^0 \setminus \{ e_{2n} \mid n \in \N \}$.
For each $z \in \mathbb{T}$ define
\[
	\alpha_z(s_e) = \begin{cases} z s_e, & \text{ if } e \in H \\ s_e, & \text{ if } e \notin H \end{cases},
\]
and 
\[
	\alpha_z(p_v) = p_v, \quad v \in G^0.
\]
By universality this defines an action of $\mathbb{T}$ on $C^*(G)$. 
A standard argument will show that $\alpha$ is strongly continuous.
If $\gamma$ is the standard gauge action on $C^*(E)$ then we have 
\[
	\phi \circ \gamma_z = \alpha_z \circ \phi, \quad \text{ for all $z \in \mathbb{T}$}.
\]
The gauge-invariant uniqueness theorem \cite[Theorem 2.1]{bhrs:idealstructure} now implies that $\phi$ is injective. 
\end{proof}

We do not need to add edges from $u$ to all the the vertices $v$ emits to. 

\begin{corol} \label{finiteAddOnlySome}
Let $G$ be a graph, $u \in G^0$ an infinite emitter.
Fix a finite emitter $v$ that $u$ emits infinitely to, and fix an edge $f \in s_{G}^{-1}(v)$.
Let $E$ be the graph with vertex set $G^0$, edge set
 \[
	E^1 = G^1 \cup \{ f^n \mid n \in \N \},
\]
and range and source maps that extend those of $G$ and have $r_{E}(f^n) = r_{G}(f)$ and $s_{E}(f^n) = u$.
Then $C^*(E) \cong C^*(F)$.
\end{corol}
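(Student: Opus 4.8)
The statement should read $C^*(E) \cong C^*(G)$ (the ``$F$'' on the right is a slip for ``$G$''), and the plan is to derive it from Lemma~\ref{addEdgesFinite} by applying that lemma \emph{twice} — once to $G$ and once to $E$ — and then observing that the two resulting graphs are isomorphic.

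First I would apply Lemma~\ref{addEdgesFinite} to $G$ with the given infinite emitter $u$ and finite emitter $v$, obtaining a graph $G'$ on the same vertex set with
\[
G'^1 = G^1 \cup \{\, h^n \mid n\in\N,\ h\in s_G^{-1}(v)\,\},\qquad s(h^n)=u,\ r(h^n)=r_G(h),
\]
together with an isomorphism $C^*(G)\cong C^*(G')$. Next I would apply the same lemma to $E$. Its hypotheses still hold: $u\neq v$ (an infinite emitter cannot coincide with a vertex to which it emits infinitely many edges, since $v$ is a finite emitter), and $E$ arises from $G$ by adjoining edges with source $u$ only, so $v$ is still a finite emitter in $E$ and $u$ still emits infinitely many edges into it in $E$. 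This produces a graph $\tilde E$ with
\[
\tilde E^1 = G^1 \cup \{\, f^n \mid n\in\N\,\} \cup \{\, h^n \mid n\in\N,\ h\in s_G^{-1}(v)\,\}
\]
(using $s_E^{-1}(v)=s_G^{-1}(v)$ and choosing the adjoined edges with fresh labels) and an isomorphism $C^*(E)\cong C^*(\tilde E)$.

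It remains to compare $\tilde E$ and $G'$. They have identical vertex sets, and the only edges of $\tilde E$ absent from $G'$ are the $f^n$, each running from $u$ to $r_G(f)$; but $G'$ already contains the countably many edges $h^n$ with $h=f$, which also run from $u$ to $r_G(f)$. Thus $\tilde E$ and $G'$ agree on all edges except that each has a countably infinite set of edges from $u$ to $r_G(f)$, so any bijection between those two sets extends to a graph isomorphism $\tilde E\cong G'$, and hence $C^*(\tilde E)\cong C^*(G')$. Concatenating the isomorphisms gives $C^*(E)\cong C^*(\tilde E)\cong C^*(G')\cong C^*(G)$.

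The argument is essentially bookkeeping and I do not expect a genuine obstacle once Lemma~\ref{addEdgesFinite} is in hand; the only care needed is in verifying that the lemma still applies to $E$ and in tracking edge labels so that the final graph isomorphism is legitimate. In particular, the degenerate cases $r_G(f)=v$ or $r_G(f)=u$ cause no trouble, since in either case $G'$ still has infinitely many edges from $u$ to $r_G(f)$ (indeed when $r_G(f)=v$ one even has $E\cong G$ directly).
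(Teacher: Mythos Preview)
Your proposal is correct and is precisely a detailed unpacking of the paper's one-line proof (``Applying Lemma~\ref{addEdgesFinite} to both $E$ and $G$ yields isomorphic graphs''). You have also correctly identified the typo $F\to G$ and carefully verified the bookkeeping the paper leaves implicit.
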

\begin{proof}
Applying Lemma \ref{addEdgesFinite} to both $E$ and $G$ yields isomorphic graphs. 
\end{proof}

The requirement that $v$ is a finite emitter seems somewhat artificial, as we are focusing on  a single edge leaving $v$.
We now wish to remove that requirement. 
To do that, we use the out-splittings of Bates and Pask \cite{tbdp:flow}.
For the convenience of the reader, we record a special case of \cite[Theorem 3.2]{tbdp:flow}:

\begin{theor}[Out-split] \label{outSplit}
Let $G$ be a graph, and let $v \in G^0$.
Given an non-empty subset $\mathcal{E}_1$ of $s_{G}^{-1}(v)$ such that $\mathcal{E}_0 = s_{G}^{-1}(v) \setminus \mathcal{E}_1$ is non-empty, we define a graph $G_{os} = (G_{os}^0, G_ {os}^1, r_{os}, s_{os})$ by 
\begin{align*}
 	G_{os}^0 & = (G^0 \setminus \{ v \}) \cup \{ v^0, v^1 \}, \\
	G_{os}^1 & = \left( G^1 \setminus r_{G}^{-1}(v) \right) \cup \{ e^0, e^1 \mid e \in E^1, r_{G}(e) = v \}.
\end{align*}
For $e \notin r_{G}^{-1}(v)$ we let $r_{os}(e) = r_{G}(e)$, for $e \in r_{G}^{-1}(v)$ we let $r_{os}(e^i) = v^i$, $i = 0,1$.
For $e \notin s_{G}^{-1}(v)$ we let $s_{os}(v) = s_{G}(e)$, for $e \in s_{G}^{-1}(v)$ we let $s_{os}(e) = v^i$ if $e \in \mathcal{E}_i$, $i= 1,2$.

If $\mathcal{E}_1$ is finite then $C^*(G)\simeq C^*(G_{os})$.
\end{theor}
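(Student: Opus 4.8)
The plan is to follow the same strategy as in the proof of Lemma~\ref{addEdgesFinite}: exhibit a Cuntz--Krieger $G$-family inside $C^{*}(G_{os})$, use universality to obtain a $*$-homomorphism $\pi\colon C^{*}(G)\to C^{*}(G_{os})$, prove injectivity with the gauge-invariant uniqueness theorem, and prove surjectivity by hand --- the latter being the only place where the hypothesis that $\mathcal{E}_1$ is finite enters.

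Let $\{q_x,t_f \mid x\in G_{os}^{0},\ f\in G_{os}^{1}\}$ be the canonical Cuntz--Krieger $G_{os}$-family generating $C^{*}(G_{os})$. Put $Q_w=q_w$ for $w\in G^{0}\setminus\{v\}$, put $Q_v=q_{v^{0}}+q_{v^{1}}$, and, for $e\in G^{1}$, put $T_e=t_e$ if $r_G(e)\neq v$ and $T_e=t_{e^{0}}+t_{e^{1}}$ if $r_G(e)=v$ (including loops at $v$, which cause only minor bookkeeping). One checks (CK0)--(CK3) for $\{Q_w,T_e\}$ directly: the orthogonality of the ranges of the $T_e$ and the relations (CK1) and (CK2) come from the corresponding relations in $C^{*}(G_{os})$ together with $q_{v^{0}}q_{v^{1}}=0$, which annihilates the cross terms appearing when $r_G(e)=v$; and (CK3) at a regular vertex $w\neq v$ is obtained by regrouping the (CK3)-relation at $w$ in $C^{*}(G_{os})$, while (CK3) at $v$ (when $v$ is regular in $G$) is obtained by adding the (CK3)-relations at $v^{0}$ and $v^{1}$ --- here one uses that a vertex of $G$ different from $v$ is regular in $G$ if and only if it is regular in $G_{os}$, and that $v$ is regular in $G$ if and only if both $v^{0}$ and $v^{1}$ are regular in $G_{os}$. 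Universality now produces $\pi$ with $\pi(p_w)=Q_w$ and $\pi(s_e)=T_e$.

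For injectivity, observe that each $Q_w$ is nonzero (it is $q_w$, or it dominates $q_{v^{0}}$), and that the gauge action $\gamma$ of $\mathbb{T}$ on $C^{*}(G_{os})$ satisfies $\gamma_z(Q_w)=Q_w$ and $\gamma_z(T_e)=zT_e$ for all $z\in\mathbb{T}$; thus $\pi$ intertwines the gauge action on $C^{*}(G)$ with $\gamma$, and the gauge-invariant uniqueness theorem \cite[Theorem~2.1]{bhrs:idealstructure} shows that $\pi$ is injective.

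Finally, surjectivity, where I expect the main obstacle to lie. Since $\mathcal{E}_1$ is finite and non-empty, $v^{1}$ is a regular vertex of $G_{os}$, so (CK3) in $C^{*}(G_{os})$ expresses $q_{v^{1}}$ as a finite sum $\sum_{f\in s_{os}^{-1}(v^{1})}t_ft_f^{*}$; each summand equals $\pi(s_es_e^{*})$ for the corresponding $e\in\mathcal{E}_1$, whence $q_{v^{1}}\in\pi(C^{*}(G))$, and then $q_{v^{0}}=\pi(p_v)-q_{v^{1}}\in\pi(C^{*}(G))$ as well. Given this, the remaining generators are covered: $q_w=\pi(p_w)$ for $w\neq v$; $t_e=\pi(s_e)$ whenever $r_G(e)\neq v$; and for $r_G(e)=v$ one recovers $t_{e^{i}}=(t_{e^{0}}+t_{e^{1}})q_{v^{i}}=\pi(s_e)\,q_{v^{i}}$ using $t_{e^{0}}q_{v^{1}}=t_{e^{1}}q_{v^{0}}=0$. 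Hence $\pi$ is onto and therefore an isomorphism. This last step is exactly where finiteness is essential: if $\mathcal{E}_1$ were infinite, then $v^{1}$ could be an infinite emitter, (CK3) would give no control over $q_{v^{1}}$, and the summands $q_{v^{0}}$ and $q_{v^{1}}$ of $\pi(p_v)$ could not be separated inside the range of $\pi$.
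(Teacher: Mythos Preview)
Your argument is correct. The paper does not actually prove this theorem; it merely records it as a special case of \cite[Theorem~3.2]{tbdp:flow} (Bates--Pask) for the reader's convenience. Your proof is essentially the standard one from that source: build a Cuntz--Krieger $G$-family inside $C^{*}(G_{os})$ by summing over the split pieces, invoke the gauge-invariant uniqueness theorem for injectivity, and use finiteness of $\mathcal{E}_1$ (hence regularity of $v^1$) to recover the individual projections $q_{v^0}$, $q_{v^1}$ and the split edges $t_{e^i}$ for surjectivity. One minor imprecision worth tightening: when $e\in\mathcal{E}_1$ is a loop at $v$, the (CK3)-sum at $v^1$ contributes \emph{two} summands $t_{e^0}t_{e^0}^*$ and $t_{e^1}t_{e^1}^*$, and it is their sum that equals $\pi(s_es_e^*)$, not each individually; this does not affect the conclusion that $q_{v^1}\in\pi(C^*(G))$.
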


We also explicitly write down how to go back. 
Note that these two theorems are two ways of saying the same thing.

\begin{theor}[Out-amalgamation] \label{outAmalgamate}
Let $G$ be a graph, and let $v^0, v^1 \in G^0$. 
We now define a new graph $G_{oa} = (G_{oa}^0, G_{oa}^1, r_{oa}, s_{oa})$ by
\begin{align*}
	G_{oa}^0 & = (G^0 \setminus \{ v^0, v^1 \}) \cup \{ v \}, \\
	G_{oa}^1 & = G^1 \setminus r_{G}^{-1}(v^1).
\end{align*}
For $e \in r_{G}^{-1}(v^0)$ we let $r_{oa}(e) = v$, for $e \notin r_{G}^{-1}(v^0)$ we let $r_{oa}(e) = r_{G}(e)$.
For $e \in s_{G}^{-1}(v^i)$ we let $s_{os}(e) = v$, for $e \notin s_{G}^{-1}(v^i)$ we let $s_{oa}(e) = s_{G}(e)$, $i=1,2$.

If $v_1$  is a finite emitter then $C^*(G) \cong C^*(G_{oa})$.
\end{theor}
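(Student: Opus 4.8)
The plan is to recognise the passage $G\mapsto G_{oa}$ as the exact inverse of an out-splitting and to read off the isomorphism from Theorem~\ref{outSplit}. So I would apply Theorem~\ref{outSplit} not to $G$ but to $G_{oa}$, splitting the amalgamated vertex $v$, and then check that the graph produced is $G$ itself. This is the precise sense in which, as remarked above, the two theorems say the same thing.

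First I would choose the partition. By construction the edges of $G_{oa}$ emitted by $v$ are precisely the edges of $G$ emitted by $v^0$ together with those emitted by $v^1$, their sources redirected to $v$; write $s_{G_{oa}}^{-1}(v)=\mathcal{E}_0\sqcup\mathcal{E}_1$ accordingly, with $\mathcal{E}_i$ corresponding to $s_G^{-1}(v^i)$. Since $v^1$ is a finite emitter in $G$, the set $\mathcal{E}_1$ is finite, which is exactly the hypothesis required by Theorem~\ref{outSplit}; both $\mathcal{E}_0$ and $\mathcal{E}_1$ are non-empty (the degenerate possibilities, where $v^0$ or $v^1$ emits no edge, are harmless and I would dispose of them separately, since then the relevant vertex is a sink). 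Theorem~\ref{outSplit} then yields $C^*(G_{oa})\cong C^*((G_{oa})_{os})$.

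It remains to exhibit a graph isomorphism $(G_{oa})_{os}\to G$. On vertices it sends the two children $v^0,v^1$ of $v$ to the vertices of the same name in $G$ and fixes everything else; on edges it fixes every edge whose range is not $v$, and sends a split pair $e^0,e^1$ lying over an edge of $G_{oa}$ into $v$ (equivalently, over an edge of $G$ into $v^0$) to that edge of $G$ and to its counterpart into $v^1$. Verifying that this is a well-defined isomorphism is a routine but slightly fiddly comparison of the source and range maps of $(G_{oa})_{os}$, as listed in Theorem~\ref{outSplit}, against those of $G$, as recorded in the definitions of $r_{oa}$ and $s_{oa}$; this bookkeeping is the only substantive part of the argument, and it is here that one uses that the incoming edges of $v^0$ and of $v^1$ sit in the bijection produced by an out-split. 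Composing the two isomorphisms gives $C^*(G)\cong C^*((G_{oa})_{os})\cong C^*(G_{oa})$. The main obstacle, such as it is, is purely combinatorial: keeping the relabelling of edges straight through the two constructions. (One could instead give a direct proof by building a Cuntz--Krieger $G_{oa}$-family inside $C^*(G)$ and invoking the gauge-invariant uniqueness theorem, as in the proof of Lemma~\ref{addEdgesFinite}, but reducing to Theorem~\ref{outSplit} is shorter.)
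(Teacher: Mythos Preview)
Your approach is exactly the paper's: the authors do not give a separate proof of Theorem~\ref{outAmalgamate} but simply declare that it and Theorem~\ref{outSplit} ``are two ways of saying the same thing'' and remark afterwards that out-splitting and out-amalgamating are inverse operations, which is precisely the reduction you carry out. Your observation that the verification of $(G_{oa})_{os}\cong G$ uses that the incoming edges of $v^0$ and $v^1$ sit in the bijection produced by an out-split is correct and is in fact an unstated hypothesis in the paper's formulation of the theorem; in the paper's only application (Lemma~\ref{addOnlySome}) the graph being amalgamated was just obtained by an out-split, so this hypothesis is satisfied there.
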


\begin{remar}
Out-splitting and out-amalgamating are inverse operations.
More specifically, if we first out-split according to some partition of $s^{-1}(v)$, and then out-amalgamate $v^0$ and $v^1$ we get back to where we started. 
\end{remar}

\begin{lemma} \label{addOnlySome}
Let $G$ be a graph, $u \in G^0$ an infinite emitter, and $v$ a vertex that $u$ emits infinitely to. Fix an edge $f \in s_{G}^{-1}(v)$.
Let $E$ be the graph with vertex set $G^0$, edge set
 \[
	E^1 = G^1 \cup \{ f^n \mid n \in \N \},
\]
and range and source maps that extend those of $G$ and have $r_{E}(f^n) = r_{G}(f)$ and $s_{E}(f^n) = u$.
Then $C^*(E) \cong C^*(F)$.
\end{lemma}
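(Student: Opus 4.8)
The plan is to bootstrap from the finite-emitter version, Corollary~\ref{finiteAddOnlySome}, using an out-splitting (Theorem~\ref{outSplit}) to replace $v$ by a finite emitter and an out-amalgamation (Theorem~\ref{outAmalgamate}) to undo the replacement after the extra edges have been adjoined. We split into two cases according to whether $v$ is a finite or an infinite emitter. If $v$ is a finite emitter, then Corollary~\ref{finiteAddOnlySome} applies verbatim, with the same $u$, $v$ and $f$, and there is nothing more to do. So assume from now on that $v$ is an infinite emitter; note that this forces $u = v$ or $u \ne v$, but in either case $u$ remains an infinite emitter.

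Put $\mathcal{E}_1 = \{f\}$ and $\mathcal{E}_0 = s_{G}^{-1}(v) \setminus \{f\}$; both are non-empty and $\mathcal{E}_1$ is finite, so Theorem~\ref{outSplit} gives $C^*(G) \cong C^*(G_{os})$, where $v$ has been split into $v^0$ and $v^1$. The vertex $v^1$ emits only the one or two edges that $f$ gives rise to under the out-split (just $f$ itself if $r_{G}(f) \ne v$, and the split edges $f^0, f^1$ if $r_{G}(f) = v$), so $v^1$ is a finite emitter; and $v^1$ still receives infinitely many edges from a single vertex $w$ of $G_{os}$, namely $w = u$ when $u \ne v$ and $w = v^0$ when $u = v$. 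Apply Corollary~\ref{finiteAddOnlySome} inside $G_{os}$ to the infinite emitter $w$, the finite emitter $v^1$, and the edge $\hat f \in s_{G_{os}}^{-1}(v^1)$ taken to be $f$ itself if $r_{G}(f) \ne v$ and to be the split copy $f^0$ (the one with range $v^0$) if $r_{G}(f) = v$. This yields $C^*(G_{os}) \cong C^*((G_{os})')$, where $(G_{os})'$ is $G_{os}$ with countably many extra edges from $w$ to $r_{G_{os}}(\hat f)$ adjoined. None of these new edges has range $v^1$, so out-amalgamating $v^0$ and $v^1$ -- legitimate by Theorem~\ref{outAmalgamate} since $v^1$ is a finite emitter -- deletes exactly the edges into $v^1$ and merges $v^0, v^1$ back to $v$, giving $C^*((G_{os})') \cong C^*(((G_{os})')_{oa})$. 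Tracking sources and ranges through the composite, out-amalgamation reverses out-splitting on the affected part, $w$ is carried back to $u$, and $r_{G_{os}}(\hat f)$ is carried back to $r_{G}(f)$ by the choice of $\hat f$, so $((G_{os})')_{oa}$ is precisely $G$ with the edges $f^n$ ($n \in \N$) from $u$ to $r_{G}(f)$ adjoined, i.e.\ it is $E$. Composing the three isomorphisms gives $C^*(G) \cong C^*(E)$.

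The only genuine obstacle is the bookkeeping in the final step: one must verify that out-amalgamation exactly undoes the out-split off of the newly adjoined edges, and that those adjoined edges end up with source $u$ and range $r_{G}(f)$. This requires separate (but entirely routine) attention to the coincidences $u = v$, $r_{G}(f) = v$ and $r_{G}(f) = u$, since each of these alters which vertex of $G_{os}$ plays the role of the infinite emitter, or which edge plays the role of $f$. No new analytic ingredient is needed beyond Theorems~\ref{outSplit} and~\ref{outAmalgamate} and Corollary~\ref{finiteAddOnlySome}.
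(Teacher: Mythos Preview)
Your proof is correct and follows essentially the same strategy as the paper's: out-split at $v$ with $\mathcal{E}_1=\{f\}$, apply Corollary~\ref{finiteAddOnlySome} to the resulting finite emitter $v^1$, then out-amalgamate $v^0,v^1$ back to $v$. The only differences are cosmetic: the paper disposes of the loop case $r_G(f)=v$ up front by the trivial observation $E\cong G$ (since $u$ already emits infinitely to $r_G(f)=v$) rather than pushing it through the machinery, and it does not explicitly separate the coincidence $u=v$ --- your treatment is in fact slightly more careful on that point.
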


\begin{proof}
There is nothing to prove if $f$ is a loop, since then $E \cong G$. 
In the case $v$ is a finite emitter, we can just appeal to Corollary \ref{finiteAddOnlySome}.

Let us consider the case where $v$ is an infinite emitter and $r_G(f) \neq v$.
Define $\mathcal{E}_1 = \{ f \}$ and $\mathcal{E}_0 = s_{G}^{-1}(v) \setminus \{ f \}$. 
We out-split according to that partition $\mathcal{E}_0, \mathcal{E}_1$ of $s_{G}^{-1}(v)$, to obtain a graph $G_{os}$ that has vertex set $(G^0 \setminus \{ v \}) \cup \{ v^0, v^1 \}$.
By Theorem \ref{outSplit} $C^*(G) \cong C^*(G_{os})$.
Using Corollary \ref{finiteAddOnlySome} on $u$ and $v^1$ yields a graph $F$ with $C^*(G_{os}) \cong C^*(F)$.

We now out-amalgamate $v^0, v^1$ in $F$ to get a graph $F_{oa}$.
By Theorem \ref{outAmalgamate} $C^*(F) \cong C^*({F_{oa}})$ since $v^1$ is a finite emitter. 
We claim that $F_{oa} \cong E$. 
They both have the same vertex set as $G$. 
Given two vertices $x,y$ such that $x \neq u$, there are the same number of edges from $x$ to $y$ in both $E$ and $F_{oa}$, as in both cases there is the same number of edges from $x$ to $y$ as there is in $G$.
For any vertex $y$ other than $r_G(f)$ we must have that there are the same number of edges from $u$ to $y$ in both $F_{oa}$ and $E$ as there is in $G$.
But if $y = r_G(f)$ then $u$ has infinitely many edges to $y$ in both $E$ and $F_{oa}$. 
Thus $F_{oa} \cong E$.

In conclusion:
\[
	C^*(G) \cong C^*(G_{os}) \cong C^*(F) \cong C^*(F_{oa}) \cong C^*(E). \qedhere
\] 
\end{proof}

The next example illustrates the different graphs used in the above proof.
\begin{examp}
Let $G$ be the graph:
\[
	\xymatrix{ 
 	 	& & \bullet \\
          	 	u \ar[r]_{\infty} & v \ar[ur]_{\infty} \ar[dr] & \\
 	 	& & \bullet \\ 
	}
\]
The first step in the proof is to out-split $G$ at $v$. 
This results in the graph $G_{os}$:
\[
	\xymatrix{ 
 	 	& v^1 \ar[r]_{f}& \bullet \\
          	 	u \ar[r]_{\infty} \ar[ur]_{\infty} & v^0 \ar[ur]_{\infty} \ar[dr] & \\
 	 	& & \bullet \\ 
	}
\]
Then Corollary \ref{finiteAddOnlySome} is applied, yielding $F$:
\[
	\xymatrix{ 
 	 	& v^1 \ar[r]_{f}& \bullet \\
          	 	u \ar[r]_{\infty} \ar[ur]_{\infty}  \ar@/^3em/[urr]^{\infty} & v^0 \ar[ur]_{\infty} \ar[dr] & \\
 	 	& & \bullet \\ 
	}
\]
Which we finally out-amalgamate to $F_{oa}$:
\[
	\xymatrix{ 
 	 	& & \bullet \\
          	 	u \ar[r]_{\infty} \ar@/^3em/[urr]^{\infty} & v \ar[ur]_{\infty} \ar[dr] & \\
 	 	& & \bullet \\ 
	}
\]
We see that $F_{oa} \cong E$. 
\end{examp} 

We now can now present the final version of our move. 

\begin{theor} \label{moveT}
Let $\alpha = \alpha_1 \alpha _2 \cdots \alpha_n$ be a path in a graph $G$. 
Let $E$ be the graph with vertex set $G^0$, edge set
 \[
	E^1 = G^1 \cup \{ \alpha^m \mid m \in \N \},
\]
and range and source maps that extend those of $G$ and have $r_{E}(\alpha^m) = r_{G}(\alpha)$ and $s_{E}(\alpha^m) = s_{G}(\alpha)$.
If 
\[
	|s_{G}^{-1}(s_{G}(\alpha_1) ) \cap r_{G}^{-1}(r_{G}(\alpha_1))| = \infty,
\]
then $C^*(G) \cong C^*(E)$.
\end{theor}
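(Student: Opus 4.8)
The plan is to induct on the length $n$ of the path $\alpha = \alpha_1 \cdots \alpha_n$, using Lemma~\ref{addOnlySome} to peel off the leading edge one step at a time. The base case $n = 1$ is trivial: the hypothesis says that $s_{G}(\alpha_1)$ already emits infinitely many edges to $r_{G}(\alpha_1)$, so adjoining countably many further edges between this same pair of vertices yields a graph isomorphic to $G$, and hence $C^*(E) \cong C^*(G)$. For the inductive step the strategy, exactly as in the proof of Corollary~\ref{finiteAddOnlySome}, is to perform one and the same enlargement on both $G$ and $E$ so that the two resulting graphs literally coincide; this reduces the claim to an instance of the theorem for a path of length $n-1$.

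So let $n \geq 2$, and abbreviate $u = s_{G}(\alpha_1) = s_{G}(\alpha)$, $v = r_{G}(\alpha_1) = s_{G}(\alpha_2)$, and $w = r_{G}(\alpha_n) = r_{G}(\alpha)$. The hypothesis says that $u$ is an infinite emitter which emits infinitely many edges to $v$, while $\alpha_2 \in s_{G}^{-1}(v)$. Applying Lemma~\ref{addOnlySome} with the edge $f = \alpha_2$ therefore shows that the graph $\hat{G}$ obtained from $G$ by adjoining a countably infinite family of edges from $u$ to $r_{G}(\alpha_2)$ satisfies $C^*(\hat{G}) \cong C^*(G)$. The very same lemma applies with $E$ in place of $G$: passing from $G$ to $E$ only adds edges with source $u$, so $u$ is still an infinite emitter emitting infinitely many edges to $v$, and $\alpha_2$ is still an edge out of $v$; this produces a graph $\hat{E}$ with $C^*(\hat{E}) \cong C^*(E)$. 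It thus suffices to prove $C^*(\hat{G}) \cong C^*(\hat{E})$. Choose one of the newly adjoined edges $\beta$ of $\hat{G}$, so $s_{\hat{G}}(\beta) = u$ and $r_{\hat{G}}(\beta) = r_{G}(\alpha_2) = s_{G}(\alpha_3)$ when $n \geq 3$, and consider the path $\beta \alpha_3 \cdots \alpha_n$ in $\hat{G}$ (just $\beta$ itself when $n = 2$); it has length $n-1$ and runs from $u$ to $w$. Since $\hat{G}$ contains infinitely many edges from $s_{\hat{G}}(\beta) = u$ to $r_{\hat{G}}(\beta)$, the inductive hypothesis applies to this path and gives $C^*(\hat{G}) \cong C^*(\hat{G}')$, where $\hat{G}'$ is $\hat{G}$ with a countably infinite family of edges from $u$ to $w$ adjoined.

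To finish, observe that $\hat{G}'$ and $\hat{E}$ are isomorphic as graphs: each is obtained from $G$ by adjoining one countably infinite family of edges from $u$ to $r_{G}(\alpha_2)$ together with one countably infinite family of edges from $u$ to $w$. Chaining the isomorphisms yields $C^*(G) \cong C^*(\hat{G}) \cong C^*(\hat{G}') \cong C^*(\hat{E}) \cong C^*(E)$, completing the induction. The main point requiring care is the repeated bookkeeping: one must check each time that the hypotheses of Lemma~\ref{addOnlySome} are met — in particular that they persist after passing to $E$, and that after the first enlargement the leading edge $\beta$ of the shortened path again has infinitely many parallels — and one should dispose of the degenerate configurations ($\alpha$ revisiting $u$; $\alpha_2$ a loop, which is already handled inside Lemma~\ref{addOnlySome}; or the borderline case $n = 2$, where $r_{G}(\alpha_2) = w$ and $\hat{G}$ already coincides with $E$). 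None of these causes real difficulty.
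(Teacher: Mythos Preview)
Your proof is correct and follows essentially the same approach as the paper: repeated application of Lemma~\ref{addOnlySome} to propagate the infinite bundle of edges along the path $\alpha$. The paper's one-line proof phrases this as ``first adding edges, and then taking away the unwanted ones'' (i.e., add bundles $u\to r(\alpha_2),\,u\to r(\alpha_3),\,\ldots,\,u\to w$ in succession, then strip off the intermediate ones in reverse), whereas you organize the same idea as an induction on $n$ and, in the spirit of Corollary~\ref{finiteAddOnlySome}, apply the lemma symmetrically to both $G$ and $E$ so that the removal step is absorbed into the graph isomorphism $\hat G'\cong\hat E$; this is a tidy variant of the same argument.
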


\begin{proof}
Apply Lemma \ref{addOnlySome} a number of times (first adding edges, and then taking away the unwanted ones).
\end{proof}

\begin{corol} \label{transitiveClosure}
If $G$ is a graph with $|G^0|<\infty$, then $C^*(\overline{G}) \cong C^*(\overline{{\tt t}G})$.
\end{corol}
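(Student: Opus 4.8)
The plan is to manufacture $\overline{{\tt t}G}$ out of $\overline{G}$ by repeatedly applying the move of Theorem \ref{moveT}. The first thing I would record are two elementary features of amplification: passing from $G$ to $\overline{G}$ does not change the reachability relation $\geq$ on vertices (adjoining parallel copies of edges that are already present creates no new paths), and in $\overline{G}$ there are, between any ordered pair of vertices, either no edges at all or countably infinitely many. In particular, if $\alpha=\alpha_1\cdots\alpha_n$ is any path in $\overline{G}$, then its first edge $\alpha_1$ automatically belongs to an infinite family of parallel edges, so the numerical hypothesis of Theorem \ref{moveT} is satisfied for \emph{every} path of $\overline{G}$.

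Since $|G^0|<\infty$, there are only finitely many ordered pairs $(v,w)$ of vertices for which there is a path from $v$ to $w$ in $G$ but no edge from $v$ to $w$ in $G$; this last condition is equivalent to there being no edge from $v$ to $w$ in $\overline{G}$, so for such a pair every path from $v$ to $w$ in $\overline{G}$ has length at least two. Enumerate these pairs as $(v_1,w_1),\dots,(v_k,w_k)$ and fix, for each $i$, a path $\beta_i$ from $v_i$ to $w_i$ inside $\overline{G}$. I would then run Theorem \ref{moveT} successively: set $G_0=\overline{G}$ and, having built $G_{i-1}$, let $G_i$ be obtained from $G_{i-1}$ by adjoining countably many new parallel edges $\beta_i^{\,m}$, $m\in\N$, from $v_i$ to $w_i$. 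Each step is permitted: $\beta_i$ is still a path of $G_{i-1}$ (we only ever add edges), and its first edge still lies in an infinite parallel family because every $G_j$ retains the edges of $\overline{G}$ and the moves only ever add countably infinite bundles; hence Theorem \ref{moveT} gives $C^*(G_{i-1})\cong C^*(G_i)$.

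It then remains to identify $G_k$. Each move adjoins an edge from $v_i$ to $w_i$ with $v_i\geq w_i$ already holding, so no move enlarges the relation $\geq$, and no move introduces an edge between any pair other than the $(v_i,w_i)$. Consequently $G_k$ has countably infinitely many edges from $v$ to $w$ exactly when there is a path from $v$ to $w$ in $G$, and none otherwise. This is precisely the edge data of $\overline{{\tt t}G}$: an edge from $v$ to $w$ exists in ${\tt t}G$ if and only if there is a path from $v$ to $w$ in $G$, and amplification replaces any positive (finite or infinite) edge multiplicity by $\aleph_0$. Hence $G_k\cong\overline{{\tt t}G}$ and $C^*(\overline{G})\cong C^*(G_k)\cong C^*(\overline{{\tt t}G})$.

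The argument has no deep step; the only point that needs genuine care is the bookkeeping in the second paragraph, namely checking that the hypothesis of Theorem \ref{moveT} survives each application. This is exactly where things would break down if a move were allowed to leave behind only finitely many parallel edges, and it is the systematic adjoining of a \emph{countably infinite} bundle at each stage that keeps every intermediate graph sufficiently amplified for the next move to apply.
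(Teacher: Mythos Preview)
Your proof is correct and follows the same approach as the paper: repeatedly apply Theorem~\ref{moveT}, using that in an amplified graph every edge lies in an infinite parallel family so the hypothesis is always satisfied. The paper's proof compresses this into two lines, whereas you spell out the induction over the finitely many missing pairs and verify that the hypothesis persists after each move; your version is more explicit but not genuinely different.
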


\begin{proof}
For any path $\alpha$ in an amplified graph, we have 
\[
	|s_{G}^{-1}(s_{G}(\alpha_1) ) \cap r_{G}^{-1}(r_{G}(\alpha_1)) | = \infty,
\]
so Theorem \ref{moveT} proves the desired result.
\end{proof}

We may ask ourselves: How important is the requirement 
\[
	| s_{G}^{-1}(s_{G}(\alpha_1) ) \cap r_{G}^{-1}(r_{G}(\alpha_1)) | = \infty,
\]
in Theorem \ref{moveT};
in particular, can we replace it by simply requiring that there are infinitely many paths from $s_{G}(\alpha)$ to $r_{G}(\alpha)$?
It turns out we cannot.
Witness:

\begin{examp}\label{nonsing}
Consider the graph $G$:
\[
	\xymatrix{
 		x & \ar[l]_{\infty} u \ar[r] & y \ar[r]^{\infty} & z
	}.
\]	
There are infinitely many paths from $u$ to $z$, so one might hope that $C^*(G)$ is isomorphic to the algebra of the graph  $E$:
\[
	\xymatrix{
 		x & \ar[l]_{\infty} u \ar@/_1em/[rr]_{\infty} \ar[r] & y \ar[r]^{\infty} & z
	}.
\]
However, $C^*(G)$ has seven ideals and $C^*(E)$ only has six.
\end{examp}

The problem in the graph above is that $u$ is a breaking vertex for $\{x\}$ in $G$ but not in $E$. Assuming that no vertices are breaking in this way, more general results are available, see Corollary \ref{c:singularisamp}.

%%%%%%%%%%%%%
%%%%%%%%%%%%%
\section{The tempered primitive ideal space} \label{sec:tempideal}
%%%%%%%%%%%%%
%%%%%%%%%%%%%

In this section we introduce the invariant with which we shall work. To state it in sufficient generality, we need a preliminary discussion about $C^{*}$-algebras over $X$.  Most of the facts given about $C^{*}$-algebras over $X$ are taken from \cite{rmrn:bootstrap}.

Let $X$ be a topological space and let $\mathbb{O}( X)$ be the set of open subsets of $X$, partially ordered by set inclusion $\subseteq$.  A subset $Y$ of $X$ is called \emph{locally closed} if $Y = U \setminus V$ where $U, V \in \mathbb{O} ( X )$ and $V \subseteq U$.  The set of all locally closed subsets of $X$ will be denoted by $\mathbb{LC}(X)$.  The set of all connected, non-empty locally closed subsets of $X$ will be denoted by $\mathbb{LC}(X)^{*}$.  

The partially ordered set $( \mathbb{O} ( X ) , \subseteq )$ is a \emph{complete lattice}, that is, any subset $S$ of $\mathbb{O} (X)$ has both an infimum $\bigwedge S$ and a supremum $\bigvee S$.  More precisely, for any subset $S$ of $\mathbb{O} ( X )$, 
\begin{equation*}
\bigwedge_{ U \in S } U = \left( \bigcap_{ U \in S } U \right)^{\circ} \quad \mathrm{and} \quad \bigvee_{ U \in S } U = \bigcup_{ U \in S } U.
\end{equation*}

For a $C^{*}$-algebra $\mathfrak{A}$ the set of closed ideals of $\mathfrak{A}$, partially ordered by $\subseteq$ is a complete lattice. More precisely, for any set $S$ of ideals, 
\begin{equation*}
\bigwedge_{ \mathfrak{I} \in S } \mathfrak{I} = \bigcap_{ \mathfrak{I} \in S } \mathfrak{I}  \quad \mathrm{and} \quad \bigvee_{ \mathfrak{I} \in S } \mathfrak{I} = \overline{ \sum_{ \mathfrak{I} \in S } \mathfrak{I} }.
\end{equation*}

\begin{defin}
Let $\mathfrak{A}$ be a $C^{*}$-algebra.  Let $\mathrm{Prim} ( \mathfrak{A} )$ denote the \emph{primitive ideal space} of $\mathfrak{A}$, equipped with the usual hull-kernel topology, also called the Jacobson topology.

Let $X$ be a topological space.  A \emph{$C^{*}$-algebra over $X$} is a pair $( \mathfrak{A} , \psi )$ consisting of a $C^{*}$-algebra $\mathfrak{A}$ and a continuous map $\ftn{ \psi }{ \mathrm{Prim} ( \mathfrak{A} ) }{ X }$.  A $C^{*}$-algebra over $X$, $( \mathfrak{A} , \psi )$, is \emph{separable} if $\mathfrak{A}$ is a separable $C^{*}$-algebra.  We say that $( \mathfrak{A} , \psi )$ is \emph{tight} if $\psi$ is a homeomorphism.  
\end{defin}

We always identify $\mathbb{O} ( \mathrm{Prim} ( \mathfrak{A} ) )$ with the ideals in $\mathfrak{A}$ using the lattice isomorphism
\begin{equation*}
U \mapsto \bigcap_{ \mathfrak{p} \in \mathrm{Prim} ( \mathfrak{A} ) \setminus U } \mathfrak{p}.
\end{equation*}
 Let $( \mathfrak{A} , \psi )$ be a $C^{*}$-algebra over $X$.  Then we get a map $\ftn{ \psi^{*} }{ \mathbb{O} ( X ) }{ \mathbb{O} ( \mathrm{Prim} ( \mathfrak{A} ) )}$ defined by
\begin{equation*}
U \mapsto \setof{ \mathfrak{p} \in \mathrm{Prim} ( \mathfrak{A} ) }{ \psi ( \mathfrak{p} ) \in U } = \mathfrak{A}[ U ].
\end{equation*}
For $Y = U \setminus V \in \mathbb{LC} ( X )$, set $\mathfrak{A}[Y] = \mathfrak{A} [U] / \mathfrak{A}[V]$.   By Lemma 2.15 of \cite{rmrn:bootstrap}, $\mathfrak{A} [ Y]$ does not depend on $U$ and $V$.

\begin{remar}
By Example 2.16 of \cite{rmrn:bootstrap}, if $\mathrm{Prim} ( \mathfrak{A} )$ is finite, then for every $x \in \mathrm{Prim} ( \mathfrak{A} )$, $\{ x \} \in \mathbb{LC} ( \mathrm{Prim} ( \mathfrak{A} ) )$ and $\mathfrak{A}[ \{ x \} ]$ is simple.  Moreover, every simple sub-quotient of $\mathfrak{A}$ is of the form $\mathfrak{A} [ \{ x \} ]$ for some $x \in \mathrm{Prim} ( \mathfrak{A} )$.  To shorten the notation, we set $\mathfrak{A} [ x ] = \mathfrak{A} [ \{ x \} ]$ for each $x \in \mathrm{Prim} ( \mathfrak{A} )$.
\end{remar}

\begin{defin}
Let $\mathfrak{A}$ and $\mathfrak{B}$ be $C^{*}$-algebras over $X$.  A homomorphism $\ftn{ \phi }{ \mathfrak{A} }{ \mathfrak{B} }$ is \emph{$X$-equivariant} if $\phi ( \mathfrak{A} [U] ) \subseteq \mathfrak{B} [U ]$ for all $U \in \mathbb{O}(X)$.  Hence, for every $Y = U \setminus V$, $\phi$ induces a homomorphism $\ftn{ \phi_{Y} }{ \mathfrak{A} [ Y ] }{ \mathfrak{B} [Y] }$.  Let $\catc(X)$ be the category whose objects are $C^{*}$-algebras over $X$ and whose morphisms are $X$-equivariant homomorphisms.  
\end{defin}

We can now define what we mean by ideal related $K$-theory. 
This has been known as filtrated $K$-theory in \cite[Definition 2.1]{ERRlinear}. 

\begin{defin}
Let $\mathfrak{A}$ be a $C^*$-algebra over some finite set $X$. 
Whenever we have open sets $U \subseteq V \subseteq W \subseteq X$ we have nested ideals $\mathfrak{A}[U] \triangleleft \mathfrak{A}[V] \triangleleft \mathfrak{A}[W] \triangleleft \mathfrak{A}$. 
Hence we get the following six-term sequence in $K$-theory 
\[
	\xymatrix{
		K_0(\mathfrak{A}[V \setminus U]) \ar[r]^{\iota_*} & K_0(\mathfrak{A}[W \setminus U]) \ar[r]^{\pi_*} & K_0(\mathfrak{A}[W \setminus V]) \ar[d]^{\delta} \\
		K_1(\mathfrak{A}[W \setminus V]) \ar[u]^{\delta} & K_1(\mathfrak{A}[W \setminus U]) \ar[l]^{\pi_*} & K_1(\mathfrak{A}[V \setminus U]) \ar[l]^{\iota_*}
	}
\]
The ideal related $K$-theory $\FK_X$ is the collection of all the $K$-groups (with $K_0$ ordered) that arise in this way together with all the bounding maps $\{ \iota_{*}, \pi_*, \delta \}$. 

Let $\mathfrak{B}$ be another $C^*$-algebra over $X$ and let $\psi$ be a self-homeomorphism of $X$.
We say that $\FK_X(\mathfrak{A}) \cong \FK_X(\mathfrak{B})$ if there exists group isomorphism $\alpha_*^{U,V} \colon K_*(\mathfrak{A}[U \setminus V]) \to K_*(\mathfrak{B}[U \setminus V])$, preserving all the bounding maps and such that $\alpha_0^{U,V}$ is an order isomorphism.

If $X = \mathrm{Prim} ( \mathfrak{A} )$, then we get all $K$-groups of all sub-quotients. 
In this case we write $\FK$ instead of $\FK_{\mathrm{Prim} ( \mathfrak{A} )}$.
\end{defin}

In the case of a simple $C^*$-algebra $\mathfrak{A}$, $\FK(\mathfrak{A})$ collapses to $(K_0(\mathfrak{A}), K_0^+(\mathfrak{A}), K_1(\mathfrak{A}))$. 
If $\mathfrak{A}$ has precisely one ideal, $\mathfrak{J}$ say, $\FK(\mathfrak{A})$ is just the six-term exact sequence in $K$-theory coming from $\mathfrak{J} \rightarrowtail \mathfrak{A} \twoheadrightarrow \mathfrak{A} / \mathfrak{J}$.

\subsection{The Alexandrov Topology} \label{sec:alexandrov}

\begin{defin}
Let \((X,\leq)\) be a preordered set.  A subset \(S\subseteq
  X\) is called \emph{Alexandrov-open} if \(S\ni x\le y\)
  implies \(y \in S\).  The Alexandrov-open subsets form a
  topology on~\(X\) called the \emph{Alexandrov topology}.
\end{defin}

A subset of~\(X\) is closed in the Alexandrov topology if and
only if \(S\ni x\) and \(x\ge y\) imply \(S\ni y\).  It is
locally closed if and only if it is \emph{convex}, that is,
\(x\le y\le z\) and \(x,z\in S\) imply \(y\in S\).  In
particular, singletons are locally closed.

The specialisation preorder for the Alexandrov topology is the
given preorder.  Moreover, a map \((X,\le)\to (Y,\le)\) is
continuous for the Alexandrov topology if and only if it is
monotone.  Thus we have identified the category of preordered
sets with monotone maps with a full subcategory of the category
of topological spaces.

It also follows that if a topological space carries an
Alexandrov topology for some preorder, then this preorder must
be the specialisation preorder.  In this case, we call the
space an \emph{Alexandrov space} or a \emph{finitely generated space}. The following theorem, \cite[Corollary 2.33]{rmrn:bootstrap}, provides an equivalent descriptions of Alexandrov spaces.

\begin{theor}
  \label{t:finite_Alexandrov}
  Any finite topological space is an Alexandrov space.  Thus
  the construction of Alexandrov topologies and specialisation
  preorders provides a bijection between preorders and
  topologies on a finite set.
\end{theor}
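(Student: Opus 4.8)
The plan is to exploit the single feature that makes finite spaces special: a finite topology has only finitely many open sets, so an \emph{arbitrary} intersection of open sets is again open. Fix a finite topological space $X$. For each $x\in X$ let $U_x=\bigcap\setof{U}{U\text{ open},\ x\in U}$; by the observation just made, $U_x$ is the smallest open set containing $x$. Define a relation on $X$ by $x\le y$ if and only if $y\in U_x$ --- equivalently, if and only if every open set containing $x$ also contains $y$, which is precisely the specialisation preorder of $X$. This is a preorder: reflexivity is the statement $x\in U_x$, and for transitivity note that $x\le y$ means $y\in U_x$, so by minimality $U_y\subseteq U_x$, and then $y\le z$ gives $z\in U_y\subseteq U_x$, i.e.\ $x\le z$.

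Next I would check that the topology of $X$ is exactly the Alexandrov topology of $\le$. On the one hand every open set $U$ is Alexandrov-open: if $x\in U$ and $x\le y$ then $U_x\subseteq U$, hence $y\in U$. On the other hand every Alexandrov-open set $S$ is open: for each $x\in S$ one has $U_x\subseteq S$ (any $y\in U_x$ satisfies $x\le y$, so $y\in S$), whence $S=\bigcup_{x\in S}U_x$ is a union of open sets. Thus $X$ carries the Alexandrov topology of a preorder --- necessarily its specialisation preorder, as recalled before the theorem --- so $X$ is an Alexandrov space.

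For the bijection statement I would simply assemble the two halves. The construction preorder $\mapsto$ Alexandrov topology is injective on any set, since (as recorded just above) composing it with ``take the specialisation preorder'' is the identity; and the first part of this proof shows that on a finite set this construction is also surjective, because every topology equals the Alexandrov topology of its own specialisation preorder. Hence the two constructions are mutually inverse, yielding a bijection between preorders and topologies on a finite set.

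I do not anticipate a genuine obstacle: once finiteness is used to produce the minimal neighbourhoods $U_x$, the rest is an unwinding of the definitions. The one point requiring care is to keep the orientation conventions straight --- Alexandrov-open sets are the up-sets of $\le$, and the specialisation preorder must be read off with the matching convention --- so that the already-recorded fact ``the specialisation preorder of the Alexandrov topology is the given preorder'' can be invoked verbatim.
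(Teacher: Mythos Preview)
Your argument is correct and is the standard one: finiteness forces arbitrary intersections of opens to be open, which yields the minimal open neighbourhoods $U_x$, and from there the identification of the given topology with the Alexandrov topology of the specialisation preorder is a routine unwinding. The bijection claim is then immediate from the two inverse constructions, exactly as you set it up.

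There is nothing to compare against, however: the paper does not supply its own proof of this statement. It is recorded there as a quotation of \cite[Corollary~2.33]{rmrn:bootstrap} and left unproved. So your proposal is not an alternative route but rather a self-contained verification of a result the paper imports wholesale.
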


\begin{remar}
A useful way to represent finite partially ordered sets %and
%hence finite sober topological space
is via finite
\emph{directed acyclic graphs}.

To a partial order~\(\preceq\) on~\(X\), we associate the
finite acyclic graph with vertex set~\(X\) and with an
arrow \(x\leftarrow y\) if and only if \(x\prec y\) and there
is no \(z\in X\) with \(x\prec z\prec y\).  We can recover the
partial order from this graph by letting \(x\preceq y\) if and
only if the graph contains a directed path \(x\leftarrow
x_1\leftarrow \dotsb \leftarrow x_n\leftarrow y\).
\end{remar}

\subsection{Invariant for $C^{*}$-algebras over a finite topological space}

Let $X$ and $Y$ be topological spaces and let $\ftn{ \alpha }{ X }{ Y }$ be a homeomorphism.  Define $\ftn{ \widetilde{ \alpha } }{ \mathbb{LC} ( X ) }{ \mathbb{LC} (Y) }$ by $\widetilde{ \alpha } ( U \setminus V ) = \alpha ( U) \setminus \alpha(V)$, where $U, V \in \mathbb{O} ( X )$ with $V \subseteq U$.  Note that since $\alpha$ is a homeomorphism, $\widetilde{ \alpha }$ is a well-defined bijection.

\begin{defin}
Let $\mathfrak{A}$ be a $C^{*}$-algebra over $X$.  Then
\begin{itemize}
\item[(1)] $\ouri_{X}( \mathfrak{A} )$ is the ordered pair $( X, \tau_{\mathfrak{A} } )$, where $\ftn{ \tau_{ \mathfrak{A} } }{X }{ \Z \cup \{ -\infty, \infty \} }$ such that 
\[
\tau_{ \mathfrak{A} } ( x ) =
\begin{cases}
 - \mathrm{rank} ( \mathfrak{A} [ x ] ) &\text{if $K_{0} ( \mathfrak{A}  [ x ] )_{+} \neq K_{0} ( \mathfrak{A} [x] )$} \\
  \mathrm{rank} ( \mathfrak{A} [ x ] ) &\text{if  $K_{0} ( \mathfrak{A}  [ x ] )_{+} = K_{0} ( \mathfrak{A} [x] )$}
\end{cases}
\]

\item[(2)] $\ouri_{ X , \Sigma } ( \mathfrak{A} )$ is the ordered triple $( X , \tau_{ \mathfrak{A} } , \sigma_{ \mathfrak{A} } )$, where the ordered pair $( X , \tau_{ \mathfrak{A} } ) = \ouri_{ X } ( \mathfrak{A} )$ and $\ftn{ \sigma_{ \mathfrak{A} } }{ \mathbb{LC} ( X ) }{ \{ 0, 1 \} }$ such that 
\begin{align*}
\sigma_{ \mathfrak{A} } (Y) = 
\begin{cases}
1, &\text{if $\mathfrak{A} [ Y ]$ is unital}\\
0, &\text{otherwise}
\end{cases}
\end{align*}
\end{itemize}
If $X = \mathrm{Prim} ( \mathfrak{A} )$, we set $\ouri_{ \mathrm{Prim} ( \mathfrak{A} ) } ( \mathfrak{A} ) = \primT ( \mathfrak{A} )$ and $\ouri_{ \mathrm{Prim} ( \mathfrak{A} ) , \Sigma } ( \mathfrak{A} ) = \primT_{ \Sigma } ( \mathfrak{A} )$.

Let $\mathfrak{B}$ be a $C^{*}$-algebra over $Y$.  
\begin{itemize}
\item[(i)] We say that $\ouri_{X}( \mathfrak{A} )$ and $\ouri_{Y} ( \mathfrak{B} )$ are  \emph{isomorphic}, denoted by $\ouri_{X}( \mathfrak{A} ) \cong \ouri_{Y}( \mathfrak{B} )$, if there exists a homeomorphism $\ftn{ \alpha }{ X }{ Y }$ such that $\tau_{ \mathfrak{B} } \circ \alpha = \tau_{ \mathfrak{A} }$.

\item[(ii)] We say that $\ouri_{X, \Sigma}( \mathfrak{A} )$ and $\ouri_{Y, \Sigma} ( \mathfrak{B} )$ are \emph{isomorphic}, denoted by $\ouri_{X, \Sigma }( \mathfrak{A} ) \cong \ouri_{Y, \Sigma}( \mathfrak{B} )$, if there exists a homeomorphism $\ftn{ \alpha }{ X }{ Y }$ such that $\tau_{ \mathfrak{B} } \circ \alpha = \tau_{ \mathfrak{A} } $ and $\sigma_{ \mathfrak{B} } \circ \widetilde{\alpha} = \sigma_{\mathfrak{A}}$.
\end{itemize}

If $\mathfrak{B}$ is a $C^{*}$-algebra over $X$ and $\alpha$ in (i) or (ii) is $\id_{X}$, then we write $\ouri_{X} ( \mathfrak{A} ) \equiv \ouri_{X} ( \mathfrak{B} )$ and $\ouri_{X, \Sigma } ( \mathfrak{A} ) \equiv \ouri_{ X , \Sigma } ( \mathfrak{B} )$.
\end{defin}

%%%%%%%%%%%%%%%%%%%%%%%%
%%%%%%%%%%%%%%%%%%%%%%%%
\section{Classification of amplified graph $C^{*}$-algebras with finitely many vertices}\label{classamplified}
%%%%%%%%%%%%%%%%%%%%%%%%
%%%%%%%%%%%%%%%%%%%%%%%%

In order to use our invariant we will need to study simple sub-quotients of $C^*(\tc{G})$. 
We focus on a nice class of hereditary subsets.  

\begin{defin}
If $G$ is a graph we define a map $\iota_G \colon G^0 \to \Her(G)$ by 
\begin{align*}
	\iota_G(v) = \{ w \in G^0 \mid v \geq w \}. 
\end{align*} 
\end{defin}

By definition, we have $u \geq u$ for any vertex $u \in G^0$.
Hence $u \in \iota_G(u)$ for all vertices $u$. 
We claim that the sets $\iota_G(v)$ are ``special'' in the lattice of hereditary sets.
More specifically:
\begin{lemma}\label{desciotaimg}
The following are equivalent for a set $H \in \Her(G)$:
\begin{enumerate}[(i)]
	\item There is a $H_0 \in \Her(G)$ such that $H_0 \subsetneq H$ and for every $H_1 \in \Her(G)$ we have $H_1 \subsetneq H \Rightarrow H_1 \subseteq H_0$. I.e. $H$ contains a largest proper hereditary subset. 
	\item $H = \iota_G(u)$ for some $u \in G^0$. 
\end{enumerate}
\end{lemma}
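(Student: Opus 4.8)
The plan is to prove the two implications separately, using throughout that $\geq$ is a preorder on $G^0$ — reflexivity is part of the definition, and transitivity follows by concatenating paths — and that the hereditary closure $\iota_G(v)$ of a single vertex is itself hereditary (if $u\geq v$ and $v\geq w$ then $u\geq w$, so $w\in\iota_G(v)$).

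For (ii)$\Rightarrow$(i), suppose $H=\iota_G(u)$ and set $K=\{v\in G^0\mid v\geq u\text{ and }u\geq v\}$, the mutual-reachability class of $u$; note $u\in K\subseteq H$. I would propose $H_0=H\setminus K$ as the largest proper hereditary subset and verify three things. First, $H_0$ is hereditary: if $v\in H_0$ and $v\geq w$, then $w\in H$, and if $w$ were in $K$ we would get $v\geq w\geq u$ together with $u\geq v$ (from $v\in H$), so $v\in K$, a contradiction. Second, $H_0\subsetneq H$ since $u\notin H_0$. Third, any hereditary $H_1\subsetneq H$ satisfies $H_1\cap K=\emptyset$: a vertex $v\in H_1\cap K$ would force $u\in H_1$ (as $v\geq u$ and $H_1$ is hereditary), and then $u\geq w$ for all $w\in H$ would force $H\subseteq H_1$, contradicting $H_1\subsetneq H$. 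Hence $H_1\subseteq H_0$.

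For (i)$\Rightarrow$(ii), let $H_0$ be the largest proper hereditary subset of $H$ and pick any $u\in H\setminus H_0$ (possible since $H_0\subsetneq H$). As $H$ is hereditary and $u\in H$, we have $\iota_G(u)\subseteq H$; if this inclusion were proper, then $\iota_G(u)$ would be a proper hereditary subset of $H$, hence contained in $H_0$, contradicting $u\in\iota_G(u)\setminus H_0$. Therefore $H=\iota_G(u)$.

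I do not expect a genuine obstacle here; the only point requiring care is in (ii)$\Rightarrow$(i), where one must delete the entire mutual-reachability class $K$ of $u$ rather than just $u$ itself, because $H\setminus\{u\}$ can fail to be hereditary when another vertex of $H$ returns to $u$.
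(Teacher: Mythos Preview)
Your proof is correct and follows essentially the same approach as the paper. The (i)$\Rightarrow$(ii) direction is identical; for (ii)$\Rightarrow$(i) the paper writes $H_0=\bigcup_{\{w\in H\mid w\not\geq u\}}\iota_G(w)$ rather than your $H\setminus K$, but these are the same set (indeed the paper observes this equivalence immediately after the proof, when it notes $\langle u\rangle=\iota_G(u)\setminus H_0$), and the verification that $H_0$ dominates every proper hereditary subset proceeds the same way.
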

\begin{proof}
First we show $(i) \Rightarrow (ii)$: Suppose that $u \in H \setminus H_0$. We claim that for all $v \in H$, we must have $u \geq v$. Since if $u \not \geq v$ for some $v \in H$, then $\iota_G(u) \subsetneq H$, but since $u \notin H_0$, we have $\iota_G(u) \not \subseteq H_0$. That is a contradiction, so we must have $u \geq v$ for all $v \in H$. But then $\iota_G(u) = H$.

We now show $(ii) \Rightarrow (i)$: Suppose $H = \iota_G(u)$. Put
\[
	H_0 = \bigcup_{\{ w \in H \mid  w \not \geq u\}} \iota_G(w). 
\]
Note that $H_0 \subsetneq H$. If $H_1 \subsetneq H$, then no $w \in H_1$ has a path to $u$. Thus 
\[
	H_1 = \bigcup_{w \in H_1} \iota_G(w) \subseteq \bigcup_{\{ w \in H \mid  w \not \geq u\}} \iota_G(w) = H_0. \qedhere
\] 
\end{proof}

\begin{corol} \label{iotalatticeiso}
If $\Phi \colon \Her(G) \to \Her(E)$ is a lattice isomorphism then 
\[
	\Phi(\iota_G(G^0)) = \iota_E(E^0).
\]
\end{corol}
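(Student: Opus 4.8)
The plan is to deduce Corollary \ref{iotalatticeiso} directly from the intrinsic, lattice-theoretic characterization of the image sets $\iota_G(G^0)$ provided by Lemma \ref{desciotaimg}. The point is that condition (i) of that lemma --- ``$H$ contains a largest proper hereditary subset'' --- is phrased purely in terms of the partial order on $\Her(G)$, and is therefore automatically preserved by any lattice isomorphism.

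Concretely, I would argue as follows. Let $\Phi\colon\Her(G)\to\Her(E)$ be a lattice isomorphism, and let $H\in\iota_G(G^0)$, say $H=\iota_G(u)$. By Lemma \ref{desciotaimg}, $H$ satisfies condition (i): there is $H_0\in\Her(G)$ with $H_0\subsetneq H$ such that every $H_1\in\Her(G)$ with $H_1\subsetneq H$ satisfies $H_1\subseteq H_0$. Now apply $\Phi$. Since $\Phi$ is an order isomorphism, $K_0:=\Phi(H_0)$ satisfies $K_0\subsetneq\Phi(H)=:K$, and for any $K_1\in\Her(E)$ with $K_1\subsetneq K$ we have $\Phi^{-1}(K_1)\subsetneq H$, hence $\Phi^{-1}(K_1)\subseteq H_0$, hence $K_1\subseteq K_0$. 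Thus $K=\Phi(H)$ satisfies condition (i) of Lemma \ref{desciotaimg} in $\Her(E)$, so by the equivalence $K\in\iota_E(E^0)$. This shows $\Phi(\iota_G(G^0))\subseteq\iota_E(E^0)$. Applying the same reasoning to $\Phi^{-1}$ gives the reverse inclusion $\Phi^{-1}(\iota_E(E^0))\subseteq\iota_G(G^0)$, i.e. $\iota_E(E^0)\subseteq\Phi(\iota_G(G^0))$, and the two inclusions together give the claimed equality.

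The only thing to be careful about is the bookkeeping with strict versus non-strict inclusions when transporting condition (i) through $\Phi$ --- one must use that an order isomorphism takes $\subsetneq$ to $\subsetneq$ (which is immediate since it is a bijection respecting $\subseteq$ in both directions) --- but there is no real obstacle here; the entire content has been isolated in Lemma \ref{desciotaimg}, and this corollary is a formal consequence. If one wants to be even more succinct, one can simply observe that property (i) is an order-theoretic property of the element $H$ in the lattice, so the set of elements satisfying it is preserved by any lattice isomorphism, and then invoke Lemma \ref{desciotaimg} to identify that set with $\iota_G(G^0)$ on one side and $\iota_E(E^0)$ on the other.
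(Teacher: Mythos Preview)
Your argument is correct and is exactly the intended one: the paper states this result as a corollary with no proof, since condition (i) of Lemma~\ref{desciotaimg} is a purely order-theoretic property and is therefore preserved by any lattice isomorphism. Your write-up simply makes explicit what the paper leaves implicit.
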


Another important feature of the sets $\iota_G(u)$ is that we can use them to get simple sub-quotients of $C^*(\tc{G})$.

\begin{defin}
Let $G = (G^0, G^1, s_G, r_G)$ be a graph. 
Put
\[
	\langle u \rangle_G = \{ v \in G^0 \mid \iota_G(v) = \iota_G(u) \}.
\]
Let
\[
	G_u = (\langle u \rangle_G, s^{-1}_G(\langle u \rangle_G) \cap r^{-1}_{G}(\langle u \rangle_G), s_G \vert_{\langle u \rangle_G}, r_G \vert_{\langle u \rangle_G}).
\]
\end{defin}

Observe that if $H_0$ is the largest proper hereditary subset of $\iota_G(u)$, then 
\begin{align*}
	\langle u \rangle	& = \{ v \in G^0 \mid u \geq v \text{ and } v \geq u \} \\
				& = \iota_G(u) \setminus \bigcup_{ v \not \geq u} \iota_G(v) \\
				& = \iota_G(u) \setminus \bigcup_{\{v \in \iota_G(v) \mid v \not \geq u\}} \iota_G(v) \\
				& = \iota_G(u) \setminus H_0 
\end{align*}

From this we easily see that if $u \in \tc{G}^0$ then $C^*(\tc{G}_u)$ is simple. 
We also get that it is a simple sub-quotient (up to stable isomorphism).

\begin{lemma} \label{simplesubq}
Let $G$ be a graph. 
Let $u \in \tc{G}^0$ and let $H_0$ be the largest proper hereditary subset of $H = \iota_{\tc{G}}(u)$.
We have 
\[ 
	I_H / I_{H_0} \otimes \K \cong C^*(\tc{G}_u) \otimes \K.
\]
In particular $|\langle u \rangle| = \mathrm{rank} ( K_{0} ( I_H / I_{H_0}) )$. 
\end{lemma}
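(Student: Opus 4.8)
The idea is to identify $I_H/I_{H_0}$ as the graph algebra of a suitable subgraph of $\tc{G}$. First I would recall the standard structure theory for gauge-invariant ideals of graph $C^*$-algebras (as in \cite{bhrs:idealstructure}): since $\tc{G}$ satisfies condition (K) and every subset of vertices is saturated, the ideal $I_H$ is Morita equivalent to $C^*(\tc{G}_H)$, where $\tc{G}_H$ is the subgraph of $\tc{G}$ with vertex set $H$ and all edges of $\tc{G}$ with source in $H$ (which, since $H$ is hereditary, automatically have range in $H$). Likewise $I_{H_0} \otimes \K \cong C^*(\tc{G}_{H_0}) \otimes \K$. The quotient $I_H/I_{H_0}$ should then be the algebra of the graph obtained from $\tc{G}_H$ by deleting the vertices of $H_0$ together with all edges incident to them. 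Concretely, this quotient graph has vertex set $H \setminus H_0 = \langle u \rangle_{\tc{G}}$ and edge set consisting of those edges of $\tc{G}$ with both source and range in $\langle u \rangle$ — which is exactly $\tc{G}_u$ by definition.

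The key computational step is therefore to verify that passing to the quotient by $I_{H_0}$ really does just erase the vertices in $H_0$ and the edges touching them. For this I would use the description of quotients of graph algebras by gauge-invariant ideals: $C^*(\tc{G}_H)/(\text{ideal corresponding to } H_0)$ is the algebra of the quotient graph, possibly with some extra adjustments at ``breaking vertices''. Here the crucial observation — which is where the amplified/transitive-closure hypothesis enters — is that there are \emph{no breaking vertices}: a breaking vertex would be a vertex $w \in \langle u \rangle$ that is an infinite emitter in $\tc{G}_H$ but emits only finitely many edges into $H \setminus H_0$. But in $\tc{G}$, if $w$ emits any edge to a vertex $v$, then (by construction of $\tc{G}$, and since $w \geq v$ forces either $v \in \langle u\rangle$ or $v \in H_0$) the edge structure is such that $w$ cannot fail to be an infinite emitter into $\langle u\rangle$ unless $w$ emits nothing to $\langle u\rangle$ at all. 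I would need to check carefully that $w \in \langle u \rangle$ always has $w \geq u$, hence $\iota_G(w) = \iota_G(u)$, so the largest proper hereditary subset of $\iota_G(w)$ equals $H_0$; combined with the chain of equalities displayed just before the lemma ($\langle u \rangle = \iota_G(u) \setminus H_0$), this pins down which edges survive. So the quotient graph is precisely $\tc{G}_u$, giving $I_H/I_{H_0} \otimes \K \cong C^*(\tc{G}_u) \otimes \K$.

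For the final sentence, $\langle u \rangle = H \setminus H_0$ is a set of vertices each of which is (in $\tc{G}$, hence in $\tc{G}_u$) either a sink or an infinite emitter, so $\tc{G}_u$ is singular. Therefore $K_0(C^*(\tc{G}_u))$ is the free abelian group on the vertex set $\langle u \rangle$ (as noted in the introduction, for graphs with no finite emitters the vertex map into $K_0$ is an isomorphism onto a free abelian group of rank $|\langle u \rangle|$). Since $K_0$ is a stable invariant, $\mathrm{rank}(K_0(I_H/I_{H_0})) = \mathrm{rank}(K_0(C^*(\tc{G}_u))) = |\langle u \rangle|$.

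\textbf{Main obstacle.} The delicate point is the no-breaking-vertices claim and, relatedly, making sure the quotient graph is literally $\tc{G}_u$ rather than $\tc{G}_u$ with some vertices turned into sinks or with extra edges. This requires pinning down the local edge structure at vertices of $\langle u \rangle$ inside $\tc{G}$ and using that $\langle u\rangle = \iota_G(u)\setminus H_0$ with $H_0$ the largest proper hereditary subset — essentially, that every edge of $\tc{G}$ out of a vertex of $\langle u \rangle$ either stays in $\langle u \rangle$ or lands in $H_0$, and that ``landing in $H_0$'' happens for infinitely many edges precisely when it happens for one, so no finiteness obstruction arises. Everything else is a direct application of the cited ideal-structure and Morita-equivalence results for graph algebras.
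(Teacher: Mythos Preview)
The paper does not actually give a proof of this lemma; it is stated immediately after the displayed computation $\langle u\rangle = \iota_{\tc{G}}(u)\setminus H_0$ and is evidently meant to follow from that identity together with standard graph-algebra structure theory from \cite{bhrs:idealstructure}. Your plan supplies precisely those details and is correct: identify $I_H$ up to Morita equivalence with $C^*(\tc{G}_H)$ via the full hereditary subalgebra generated by $\{p_v : v\in H\}$, pass to the quotient by the ideal corresponding to $H_0$, observe that there are no breaking vertices so the quotient graph is exactly $\tc{G}_u$, and then read off the rank of $K_0$ from the fact that $\tc{G}_u$ is singular.

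One minor simplification: your ``main obstacle'' is easier than you make it sound. In an amplified graph the number of edges between any ordered pair of vertices is either $0$ or $\infty$, so for \emph{any} hereditary set $H_0$ and any infinite emitter $w\notin H_0$ the set $s^{-1}(w)\setminus r^{-1}(H_0)$ is automatically empty or infinite. Thus amplified graphs have no breaking vertices whatsoever, and you do not need the more delicate analysis of where edges out of $w\in\langle u\rangle$ can land.
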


We are now ready to classify amplified graph algebras. 

\begin{propo}\label{p:invtrancls}
Let $G$ and $E$ be graphs with finitely many vertices. 
If $\primT(C^*(\tc{G})) \cong \primT(C^*(\tc{E}))$ then $\tc{G} \cong \tc{E}$. 
\end{propo}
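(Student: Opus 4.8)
The plan is to recover the graph $\tc{G}$, as a combinatorial object, from the invariant $\primT(C^*(\tc{G}))$. The key observation is that for a graph of the form $\tc{G}$ with finitely many vertices, the primitive ideal space together with the temperature map $\tau$ encodes all the data needed to reconstruct $\tc{G}$ up to isomorphism, because $\tc{G}$ is a transitively closed graph on a finite vertex set and such graphs are determined by their reachability preorder together with the ``multiplicity type'' (loop vs.\ no loop, and how many vertices sit in each strongly connected component). So the first step is to translate the hypothesis $\primT(C^*(\tc{G})) \cong \primT(C^*(\tc{E}))$ into a statement about the hereditary-set lattices: a homeomorphism $\Prim(C^*(\tc{G})) \to \Prim(C^*(\tc{E}))$ intertwining $\tau$ yields, via the identification of $\mathbb{O}(\Prim)$ with ideals and the isomorphism $\Her(\tc{G}) \cong \ilat(C^*(\tc{G}))$ valid since $\tc{G}$ satisfies condition (K), a lattice isomorphism $\Phi \colon \Her(\tc{G}) \to \Her(\tc{E})$.

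The second step is to use Corollary~\ref{iotalatticeiso}: $\Phi$ restricts to a bijection $\iota_{\tc{G}}(\tc{G}^0) \to \iota_{\tc{E}}(\tc{E}^0)$. Now I would build the candidate graph isomorphism. By Lemma~\ref{desciotaimg} the sets $\iota_{\tc{G}}(u)$ are exactly those hereditary sets with a largest proper hereditary subset $H_0(u)$, and by the displayed computation preceding Lemma~\ref{simplesubq}, $\langle u \rangle_{\tc{G}} = \iota_{\tc{G}}(u) \setminus H_0(u)$, with $|\langle u \rangle_{\tc{G}}| = \mathrm{rank}(K_0(I_{\iota(u)}/I_{H_0(u)}))$; these ranks are read off from $\tau$ via Lemma~\ref{simplesubq} (taking absolute values of $\tau$ at the relevant points, which by the remark in Section~\ref{sec:tempideal} correspond to the simple subquotients $C^*(\tc{G}_u)$). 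Since $\Phi$ is a lattice isomorphism, it carries the pair $(\iota_{\tc{G}}(u), H_0(u))$ to a pair of the same form in $\Her(\tc{E})$, hence matches $\langle u \rangle_{\tc{G}}$ with some $\langle \Phi(u)\rangle_{\tc{E}}$, and the equality of cardinalities of these strongly connected components follows from $\tau_{C^*(\tc{G})} = \tau_{C^*(\tc{E})} \circ \alpha$. This gives a bijection $\tc{G}^0 \to \tc{E}^0$ after choosing, within each strongly connected component, an arbitrary matching of vertices of equal cardinality blocks.

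The third step is to check that this vertex bijection extends to a graph isomorphism, i.e.\ that it respects edges with the correct multiplicities. Here I would argue separately for the three kinds of edges in a transitively closed amplified-source graph. Between distinct strongly connected components $\langle u\rangle$ and $\langle w \rangle$ with $u \geq w$, $u \not\geq w$ reversed: there is an edge in $\tc{G}$ from a vertex of $\langle u \rangle$ to a vertex of $\langle w \rangle$ precisely when $\iota_{\tc{G}}(w) \subseteq \iota_{\tc{G}}(u)$, a lattice-theoretic condition preserved by $\Phi$; and in $\tc{G}$ (being transitively closed) this means \emph{every} vertex of $\langle u\rangle$ emits to \emph{every} vertex of $\langle w \rangle$. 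Within a single strongly connected component $\langle u \rangle$: either $\langle u \rangle$ is a singleton with no loop — detected by $\tau$ giving a nonnegative value (the simple subquotient is $\cong \C$, i.e.\ $\mathrm{rank}=1$ and $K_0$ positively ordered... actually the finite-dimensional case) versus $\langle u \rangle$ carrying loops, where $C^*(\tc{G}_u)$ is a simple purely infinite Cuntz–Krieger algebra and $\tau$ records the negative sign; in the looped case, since the component lies inside an amplified graph, $\tc{G}_u$ is the complete graph on $\langle u \rangle$ with infinitely many edges everywhere, so it is determined by $|\langle u\rangle|$ alone. The one genuinely delicate point — and the step I expect to be the main obstacle — is disentangling within a strongly connected component whether there is a single vertex with a single (necessarily infinitely-amplified) loop versus a single vertex with no loop: these give simple subquotients $C^*(\tc{G}_u)$ equal to $\mathcal{O}_\infty$ (stably) versus $\C$, distinguished by the sign of $\tau$, so in fact $\tau$ does separate them; the point to be careful about is that in $\tc{G}$ the transitive closure may have introduced a loop at a vertex that was loop-free in $G$, but since we work entirely with $\tc{G}$ and $\tc{E}$ this is not an issue. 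Assembling these cases, the vertex bijection is seen to be a graph isomorphism $\tc{G} \cong \tc{E}$, completing the proof. $\square$
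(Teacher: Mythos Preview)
Your proposal is correct and follows essentially the same route as the paper's proof: pass from the homeomorphism of primitive ideal spaces to a lattice isomorphism $\Phi\colon\Her(\tc{G})\to\Her(\tc{E})$, use Corollary~\ref{iotalatticeiso} and Lemma~\ref{simplesubq} to match strongly connected components via $|\tau|$, build a vertex bijection $\psi$ satisfying $\Phi(\iota_{\tc{G}}(u))=\iota_{\tc{E}}(\psi(u))$, and then verify it extends to a graph isomorphism by checking reachability (via containment of $\iota$-sets) and the loop/no-loop dichotomy on singleton components (via the sign of $\tau$). One small slip to correct: you have the sign convention reversed in your third step---for $\C$ one has $K_0(\C)_+\neq K_0(\C)$, so $\tau=-1$, while for $\mathcal{O}_\infty$ one has $K_0^+=K_0$, so $\tau=+1$---but as you rightly observe, all that matters is that the sign distinguishes the two cases.
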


\begin{proof}
Suppose $\primT(C^*(\tc{G})) \cong \primT(C^*(\tc{E}))$.
Then there exists a homeomorphism $\ftn{ \phi }{ \mathrm{Prim} ( C^{*} ( \tc{G} ) ) }{ \mathrm{Prim} ( C^{*} ( \tc{E} ) ) }$ such that 
\[
	\tau_{ C^{*} ( \tc{G} ) } \circ \phi = \tau_{ C^{*} ( \tc{E} ) }.
\]
The homeomorphism induces a lattice isomorphism $\Phi \colon \Her(\tc{G}) \to \Her(\tc{E})$.

We want to define a bijection $\ftn{\psi}{\tc{G}^0}{\tc{E}^0}$ such that $\Phi(\iota_{\tc{G}}(u)) = \iota_{\tc{E}}(\phi(u))$.
Fix a vertex $u \in \tc{G}^0$. 
By Corollary \ref{iotalatticeiso} there is at least one vertex $\tilde{u} \in \tc{E}^0$ such that $\Phi(\iota_{\tc{G}}(u)) = \iota_{\tc{E}}(\tilde{u})$. 
Using the fact that $\tau_{ C^{*} ( \tc{G} ) } \circ \phi = \tau_{ C^{*} ( \tc{E} ) }$, we will prove that the sets $\langle u \rangle_{\tc{G}}$ and $\langle \tilde{u} \rangle_{\tc{E}}$ are the same size.
Indeed, considering first $H=\iota_{\tc{G}}(u)$, we note that by Lemma \ref{desciotaimg} it contains a largest proper hereditary subset $H_0$. 
Let $x_{u} \in \mathrm{Prim} ( C^{*} ( \tc{G} ) )$ be the unique element such that $C^{*} ( \tc{G} ) [ x_{u} ] = I_{H}^{ \tc{G} } / I_{ H_{0}}^{\tc{G} }$.
Note that by Lemma \ref{simplesubq}
\[
	| \tau_{ C^{*} ( \tc{G} ) } ( x_{u} ) | =  | \mathrm{rank} ( K_{0} ( C^{*} ( \tc{G} ) [ x_{u}  ] ) ) | = | \mathrm{rank} ( K_{0} ( I_{H}^{ \tc{G} } / I_{ H_{0}}^{\tc{G} } ) ) | = | \langle u \rangle_{ \tc{G} } |.
\]

Let $F = \iota_{\tc{E}} ( \tilde{u} )$ and let $F_{0}$ be the largest proper hereditary subset of $F$.
Note that since $\Phi$ is a lattice isomorphism and since $H_{0}$ is the largest proper hereditary subset of $H = \iota_{ \tc{G} } (u)$, $\Phi ( H_{0} )$ is the largest proper hereditary subset of $\Phi (H) = \iota_{ \tc{E} } ( \widetilde{u} )$.
That is $\Phi ( H_{0} ) = F_{0}$.
Therefore, $I_{F}^{ \tc{E} } / I_{ F_{0}}^{\tc{E} } = C^{*} ( \tc{E} ) [  \phi ( x_{u} )  ]$.
Computing as before, we then get
\[
	| \langle \tilde{u} \rangle_{ \tc{E} } | = |\tau_{ C^{*} ( \tc{E} ) } ( \phi (x_{u}) )| = |\tau_{ C^{*} ( \tc{G} ) } ( x_{u} )| = | \langle u \rangle_{ \tc{G} } |.
\] 
Thus we can define a bijection $\psi \colon \tc{G}^0 \to \tc{E}^0$ that satisfies
\[
	\Phi(\iota_{\tc{G}}(u)) = \iota_{\tc{E}}(\psi(u)).
\]	

We claim that $\psi$ can be used to construct a graph isomorphism. 
Since we are dealing with amplifications of transitively closed graphs, all we need to show is that if $u,v \in \tc{G}^0$ then $u \geq v$ if and only if $\psi(u) \geq \psi(v)$, and that there is a simple loop based at $u$ if and only if there is a simple loop based at $\psi(u)$.

Suppose $u \neq v$.
Then
\begin{align*}
	u \geq v	& \iff \iota_{\tc{G}}(u) \supseteq \iota_{\tc{G}}(v) \\
			& \iff \Phi(\iota_{\tc{G}}(u)) \supseteq \Phi(\iota_{\tc{G}}(v)) \\
			& \iff \iota_{\tc{E}}(\psi(u)) \geq \iota_{\tc{E}}(\psi(v)) \\
			& \iff \psi(u) \geq \psi(v).
\end{align*}
So we are left with checking the claim about simple loops.
If $|\langle u \rangle| \geq 2$ then there is some vertex $w \in \tc{G}^0$ such that $w \neq u$ and $u \geq w \geq u$. 
So since $\tc{G}$ is transitively closed, there is a simple loop based at $u$. 
Since $|\langle u \rangle| = |\langle \psi(u) \rangle|$, the same argument shows that there is a simple loop based at $\psi(u)$.

We now consider the case $|\langle u \rangle| = 1$. 
In this case $C^*(\tc{G}_u)$ is stably isomorphic to either $\mathcal{O}_\infty$ or $\C$ depending on whether or not there is a simple loop based at $u$. 
Similarly for $C^*(\tc{E}_{\psi(u)})$. 
Let $x_{u} \in \mathrm{Prim} ( C^{*} ( \tc{G} ) )$ be the unique element such that $C^{*} ( \tc{G} ) [ x_{u} ] = I_{H}^{ \tc{G} } / I_{ H_{0}}^{\tc{G} }$.
As before we see that 
\[
	\tau_{ C^{*} ( \tc{E} ) } ( \phi (x_{u}) ) = \tau_{ C^{*} ( \tc{G} ) } ( x_{u} ).
\]
Hence the simple sub-quotients are either both stably isomorphic to $\mathcal{O}_\infty$ or both stably isomorphic to $\C$ (depending on the sign of $\tau(\cdots)$).
Therefore there is a simple loop based at $u$ if and only if there is one based at $\psi(u)$ if and only if $C^*(\tc{G}_u)$ is stably isomorphic to $\mathcal{O}_\infty$. 

We can now extend $\psi$ to an isomorphism between $\tc{G}$ and $\tc{E}$.
\end{proof}

\begin{theor}\label{ourmain}
Let $G$ and $F$ be graphs with finitely many vertices.
The following are equivalent.
\begin{itemize}
\item[(i)] $\tc{G} \cong \tc{F}$.

\item[(ii)] $C^{*} ( \tc{G} ) \cong C^{*} ( \tc{F} )$.

\item[(iii)] $C^{*} ( \overline{G} ) \cong C^{*} (\overline{F} )$.

\item[(iv)] $C^{*} ( \overline{G} ) \otimes \K \cong C^{*} ( \overline{F} ) \otimes \K $.

\item[(v)] $\FK( C^{*} ( \overline{G} ) ) \cong \FK( C^{*} ( \overline{F} )  )$.

\item[(vi)] $\primT( C^{*} ( \overline{G} ) ) \cong \primT( C^{*} ( \overline{F} )  )$.
\end{itemize}
\end{theor}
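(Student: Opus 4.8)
The plan is to run through the cycle of implications
\[
\text{(i)}\Rightarrow\text{(ii)}\Rightarrow\text{(iii)}\Rightarrow\text{(iv)}\Rightarrow\text{(v)}\Rightarrow\text{(vi)}\Rightarrow\text{(i)},
\]
noting at the outset that the only serious input, Proposition~\ref{p:invtrancls}, is already available, and that Corollary~\ref{transitiveClosure} does most of the remaining work. The first three arrows are formal. For (i)$\Rightarrow$(ii): a graph isomorphism $\tc{G}\cong\tc{F}$ transports a universal Cuntz--Krieger $\tc{G}$-family onto a Cuntz--Krieger $\tc{F}$-family inside $C^{*}(\tc{G})$, and running the same argument with the inverse isomorphism produces, via the universal property, mutually inverse $\ast$-homomorphisms, so $C^{*}(\tc{G})\cong C^{*}(\tc{F})$. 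For (ii)$\Rightarrow$(iii): since $\tc{G}=\overline{{\tt t}G}$, Corollary~\ref{transitiveClosure} gives fixed isomorphisms $C^{*}(\overline{G})\cong C^{*}(\tc{G})$ and $C^{*}(\overline{F})\cong C^{*}(\tc{F})$, so (ii) and (iii) are the same statement up to these identifications. And (iii)$\Rightarrow$(iv) is obtained by tensoring the isomorphism with the identity on $\K$.

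For (iv)$\Rightarrow$(v) I would use that $\FK$ is a stable-isomorphism invariant. For any $C^{*}$-algebra $\mathfrak{A}$ there is a canonical homeomorphism $\Prim(\mathfrak{A}\otimes\K)\cong\Prim(\mathfrak{A})$ compatible with the passage to locally closed subsets, and for each such subset $Y$ one has $(\mathfrak{A}\otimes\K)[Y]\cong\mathfrak{A}[Y]\otimes\K$; since $-\otimes\K$ leaves $K_{0}(-)$ with its positive cone and $K_{1}(-)$ unchanged and the six-term sequences are natural, this yields $\FK(\mathfrak{A}\otimes\K)\cong\FK(\mathfrak{A})$. Hence an isomorphism $C^{*}(\overline{G})\otimes\K\cong C^{*}(\overline{F})\otimes\K$ produces $\FK(C^{*}(\overline{G}))\cong\FK(C^{*}(\overline{F}))$.

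The step (v)$\Rightarrow$(vi) is where the hypotheses on $G$ and $F$ enter, via the observation that $\primT(C^{*}(\overline{G}))$ is encoded inside $\FK(C^{*}(\overline{G}))$. Because $\overline{G}$ is amplified with finitely many vertices, it satisfies condition~(K), every vertex set is saturated, and no infinite emitter is a breaking vertex for a hereditary set (in an amplified graph such a vertex sends either no edges or infinitely many edges into the complement of the set); hence the ideal lattice of $C^{*}(\overline{G})$ is the finite lattice $\Her(\overline{G})$ and $\Prim(C^{*}(\overline{G}))$ is finite. By the remark in Section~\ref{sec:tempideal}, each singleton $\{x\}$ is then locally closed and $\mathfrak{A}[x]$ is a simple sub-quotient whose \emph{ordered} $K_{0}$-group occurs among the data of $\FK$. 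An $\FK$-isomorphism supplies a homeomorphism $\alpha$ of the primitive ideal spaces together with order isomorphisms $K_{0}(C^{*}(\overline{G})[x])\cong K_{0}(C^{*}(\overline{F})[\alpha(x)])$ for every $x$, so the torsion-free rank of these groups and the validity of ``$K_{0}^{+}=K_{0}$'' agree on the two sides; by the definition of $\tau$ this says precisely that $\tau_{C^{*}(\overline{F})}\circ\alpha=\tau_{C^{*}(\overline{G})}$, i.e.\ $\primT(C^{*}(\overline{G}))\cong\primT(C^{*}(\overline{F}))$.

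Finally, for (vi)$\Rightarrow$(i) I would again invoke Corollary~\ref{transitiveClosure}: as $\primT$ is an isomorphism invariant and $C^{*}(\overline{G})\cong C^{*}(\tc{G})$, the hypothesis of (vi) gives $\primT(C^{*}(\tc{G}))\cong\primT(C^{*}(\tc{F}))$, and Proposition~\ref{p:invtrancls} then yields $\tc{G}\cong\tc{F}$, which is~(i). This closes the cycle. Since the one nontrivial ingredient is already proved, the step that needs the most care here is (v)$\Rightarrow$(vi): one must be confident that the simple sub-quotients $\mathfrak{A}[x]$ genuinely appear inside $\FK$ together with their order structure, which is exactly what the finiteness of $\Prim(C^{*}(\overline{G}))$ and the absence of breaking vertices are there to guarantee.
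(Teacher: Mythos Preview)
Your proof is correct and follows exactly the same cycle of implications as the paper's own argument; the paper is simply terser, dispatching (iii)$\Rightarrow$(iv)$\Rightarrow$(v)$\Rightarrow$(vi) as holding ``for obvious reasons'' where you spell out why $\FK$ is stable under tensoring with $\K$ and why $\primT$ is recoverable from $\FK$ in the finite-primitive-ideal-space setting. The closing step (vi)$\Rightarrow$(i) via Corollary~\ref{transitiveClosure} and Proposition~\ref{p:invtrancls} is identical to the paper's.
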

\begin{proof}
Clearly $(i) \implies (ii)$. 
By Corollary \ref{transitiveClosure}, $(ii) \implies (iii)$.
The implications $(iii) \implies (iv)$, $(iv) \implies (v)$, and $(v) \implies (vi)$ all hold for obvious reasons. 
Finally the implication $(vi) \implies (i)$ follows by first appealing to Corollary \ref{transitiveClosure} to see that 
\[
	\primT( C^{*} ( \tc{G} ) ) \cong \primT( C^{*} ( \tc{F} )  ),
\]
and then applying Proposition \ref{p:invtrancls}.
\end{proof}

We can even do strong classification. 

\begin{lemma} \label{l:strongtc}
Let $G$ and $F$ be graphs with finitely many vertices.
If we are given an isomorphism $\alpha \colon \primT(C^*(\tc{G})) \to \primT(C^*(\tc{F}))$, then we can find an isomrphism $\phi \colon C^*(\tc{G}) \to C^*(\tc{F})$ such that $\primT(\phi) = \alpha$.
\end{lemma}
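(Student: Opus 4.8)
The plan is to upgrade the bijection $\psi\colon \tc{G}^0\to\tc{F}^0$ produced in the proof of Proposition~\ref{p:invtrancls} into an actual $*$-isomorphism $\phi\colon C^*(\tc{G})\to C^*(\tc{F})$ whose induced map on tempered primitive ideal spaces is the given $\alpha$. The starting point is that $\alpha$ consists of a homeomorphism $\phi_\alpha\colon \Prim(C^*(\tc{G}))\to\Prim(C^*(\tc{F}))$ intertwining the $\tau$-maps, and by the argument of Proposition~\ref{p:invtrancls} this homeomorphism forces a graph isomorphism $\bar\psi\colon \tc{G}\to\tc{F}$ (it determines $\psi$ on vertices via the sets $\iota(u)$, hence — since both graphs are transitively closed and amplified — completely, once one checks it respects simple loops at singletons using the sign of $\tau$). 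A graph isomorphism always induces a $*$-isomorphism $C^*(\bar\psi)\colon C^*(\tc{G})\to C^*(\tc{F})$ by universality, so set $\phi = C^*(\bar\psi)$.

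The key steps, in order, are: (1) extract $\phi_\alpha$ and the lattice isomorphism $\Phi\colon\Her(\tc{G})\to\Her(\tc{F})$ from $\alpha$, exactly as in Proposition~\ref{p:invtrancls}; (2) run that proof to produce the vertex bijection $\psi$ with $\Phi(\iota_{\tc{G}}(u))=\iota_{\tc{F}}(\psi(u))$ and extend it to a graph isomorphism $\bar\psi$; (3) let $\phi=C^*(\bar\psi)$, the canonical $*$-isomorphism $p_v\mapsto p_{\bar\psi(v)}$, $s_e\mapsto s_{\bar\psi(e)}$; (4) verify $\primT(\phi)=\alpha$, i.e.\ that the homeomorphism on primitive ideal spaces induced by $\phi$ coincides with $\phi_\alpha$, and that on the $\tau$-level everything is automatic. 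Step (4) decomposes into checking agreement of the two homeomorphisms $\Prim(C^*(\tc{G}))\to\Prim(C^*(\tc{F}))$: since $\Prim$ of a graph algebra over a graph satisfying condition~(K) with all vertex sets saturated is identified with $\Her$ (via $H\mapsto$ the ideal $I_H$), and since both $\phi_\alpha$ and $\phi$ induce the \emph{same} lattice automorphism on $\Her$ by construction (namely $\Phi$, because $\bar\psi$ was built to realise $\Phi$ on the distinguished sets $\iota(u)$, which generate the whole lattice — this is where Lemma~\ref{desciotaimg} and Corollary~\ref{iotalatticeiso} do the work), the two homeomorphisms agree on all open sets, hence on all points, hence coincide. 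The $\tau$-equivariance of $\phi$ is then free: $\phi$ being a $*$-isomorphism carrying $C^*(\tc{G})[x]$ onto $C^*(\tc{F})[\phi_\alpha(x)]$ means $\tau_{C^*(\tc{F})}\circ\phi_\alpha=\tau_{C^*(\tc{G})}$ automatically, and this is exactly the $\tau$-condition satisfied by $\alpha$.

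I expect the main obstacle to be step (4): pinning down that $\phi$ induces precisely $\phi_\alpha$ and not merely \emph{some} isomorphism of tempered primitive ideal spaces. The subtlety is that $\Prim(C^*(\tc{G}))$ is in general strictly larger than the vertex set — there can be several primitive ideals corresponding to a single $\langle u\rangle$-block when $|\langle u\rangle|\ge 2$ and the block contributes a simple purely infinite quotient with a one-point spectrum, so actually each $\langle u\rangle$-block gives a single point of $\Prim$, and the identification $\Prim\cong\iota_{\tc{G}}(\tc{G}^0)$ (ordered by reverse inclusion) needs to be set up carefully and shown to be natural with respect to both $\phi_\alpha$ and $C^*(\bar\psi)$. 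Once that identification is in place and one observes that $\psi$ was \emph{defined} by the equation $\Phi(\iota_{\tc{G}}(u))=\iota_{\tc{F}}(\psi(u))$, the coincidence $\primT(\phi)=\alpha$ reduces to the statement that a lattice isomorphism of $\Her$'s is determined by its values on the $\iota(u)$'s, which is immediate since every hereditary set is a union of such. A secondary routine point is checking that $\bar\psi$ really is well defined on edges and not just vertices, but in a transitively closed amplified graph an edge is determined up to the amplification by its endpoints together with the loop/non-loop data, all of which $\psi$ preserves, so this is a short bookkeeping argument rather than a genuine difficulty.
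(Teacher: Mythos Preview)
Your proposal is correct and follows essentially the same route as the paper: extract the lattice isomorphism $\Phi$ on hereditary sets from $\alpha$, invoke the proof of Proposition~\ref{p:invtrancls} to obtain a graph isomorphism $\bar\psi$ realising $\Phi$ on the sets $\iota(u)$, and take $\phi=C^*(\bar\psi)$. The paper's argument is simply terser---it asserts directly that $\bar\psi$ induces the same lattice isomorphism as $\alpha$ (hence the same homeomorphism on $\Prim$, since $\Prim$ is $T_0$), whereas you spell out why agreement on the generators $\iota(u)$ forces agreement on all of $\Her$; your discussion of the ``subtlety'' is a bit tangled (the primitive ideal space is in bijection with $\iota_{\tc{G}}(\tc{G}^0)$, hence generally \emph{smaller} than the vertex set, not larger), but you arrive at the right conclusion.
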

\begin{proof}
Let $\beta \colon \Her(\tc{G}) \to \Her(\tc{F})$ denote the lattice isomorphism induced by $\alpha$.
Recall from the proof of Proposition \ref{p:invtrancls}, that we can find a graph isomorphism  $\psi \colon \tc{G} \to \tc{F}$, such that if $u,v \in \tc{G}^0$ then $u \geq v$ if and only if $\psi(u) \geq \psi(v)$. 
Hence the lattice isomorphism from $\Her(\tc{G})$ to $\Her(\tc{F})$ induced by $\psi$ is the same as $\beta$.
Therefore the $C^*$-isomorphism induced by $\psi$ will induce $\alpha$.
\end{proof}

\begin{propo} \label{p:strongclassamp}
Let $G$ and $F$ be graphs with finitely many vertices.
If we are given an isomorphism $\alpha \colon \primT(C^*(\overline{G})) \to \primT(C^*(\overline{F}))$, then we can find an isomorphism $\phi \colon C^*(\overline{G}) \to C^*(\overline{F})$ such that $\primT(\phi) = \alpha$.
\end{propo}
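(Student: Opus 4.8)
The plan is to reduce to the strong classification statement already established for transitively closed graphs, namely Lemma \ref{l:strongtc}, by a transport-of-structure argument along the $*$-isomorphisms furnished by Corollary \ref{transitiveClosure}. Recall that for graphs with finitely many vertices that corollary gives $*$-isomorphisms $\rho_G \colon C^*(\overline{G}) \to C^*(\tc{G})$ and $\rho_F \colon C^*(\overline{F}) \to C^*(\tc{F})$. Any $*$-isomorphism $\theta \colon \mathfrak{A} \to \mathfrak{B}$ induces an isomorphism $\primT(\theta) \colon \primT(\mathfrak{A}) \to \primT(\mathfrak{B})$: it induces a homeomorphism of primitive ideal spaces, and that homeomorphism automatically intertwines the $\tau$-maps since $\theta$ carries simple subquotients to simple subquotients isomorphically and $\tau$ is an isomorphism invariant of them. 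Moreover this assignment respects composition and sends inverses to inverses, so we may freely conjugate.

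Concretely, I would first form $\beta := \primT(\rho_F) \circ \alpha \circ \primT(\rho_G)^{-1}$, an isomorphism $\primT(C^*(\tc{G})) \to \primT(C^*(\tc{F}))$. Next I would apply Lemma \ref{l:strongtc} to $\beta$ to obtain a $*$-isomorphism $\phi' \colon C^*(\tc{G}) \to C^*(\tc{F})$ with $\primT(\phi') = \beta$. Finally I would set $\phi := \rho_F^{-1} \circ \phi' \circ \rho_G \colon C^*(\overline{G}) \to C^*(\overline{F})$, so that
\[
\primT(\phi) = \primT(\rho_F)^{-1} \circ \primT(\phi') \circ \primT(\rho_G) = \primT(\rho_F)^{-1} \circ \beta \circ \primT(\rho_G) = \alpha,
\]
which is exactly the desired conclusion.

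The argument is essentially formal, so there is no analytic obstacle; the only points that need care are bookkeeping. One must fix a direction convention for $\primT(\theta)$ and verify that $\primT$ is functorial for $*$-isomorphisms — preserving composition and sending $\theta^{-1}$ to $\primT(\theta)^{-1}$ — which is immediate from the corresponding facts for the primitive-ideal-space functor together with the remark above about simple subquotients. It is worth noting (though not strictly needed) that $\overline{G}$ and $\tc{G}$ have identical hereditary-subset lattices, since forming the transitive closure changes neither the vertex set nor the reachability relation; thus $\primT(C^*(\overline{G}))$ and $\primT(C^*(\tc{G}))$ are even canonically identified. The proof, however, does not require $\rho_G$ to realise this canonical identification — it is enough that $\rho_G$ induces \emph{some} isomorphism on $\primT$ — and it is precisely this slack that keeps the argument so short.
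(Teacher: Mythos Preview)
Your proof is correct and follows essentially the same approach as the paper's own argument: both transport $\alpha$ along the isomorphisms from Corollary~\ref{transitiveClosure} to the transitively closed setting, invoke Lemma~\ref{l:strongtc} there, and transport back. The only differences are notational (the paper uses $\phi,\psi,\chi,\lambda$ where you use $\rho_G,\rho_F,\phi',\phi$) and your added remarks on functoriality and the canonical identification of hereditary-subset lattices, which are helpful but not needed.
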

\begin{proof}
By Corollary \ref{transitiveClosure} we can find $*$-isomorphisms
\[
	 \phi \colon C^*(\overline{G}) \to C^*(\tc{G}) \quad \text{ and } \quad \psi \colon C^*(\overline{F}) \to C^*(\tc{F}).
\]
Let 
\[
	\beta = \primT(\psi) \circ \alpha \circ \primT(\phi^{-1}).
\]
Then $\beta$ is an isomorphism from $\primT(C^*(\tc{G}))$ to $\primT(C^*(\tc{F}))$. 
So by Lemma \ref{l:strongtc} we can find a $*$-isomorphism $\chi \colon C^*(\tc{G}) \to C^*(\tc{F})$ such that $\primT(\chi) = \beta$.
We now put
\[
	\lambda = \psi^{-1} \circ \chi \circ \phi.
\]
Note that $\lambda$ is an isomorphism from $C^*(\overline{G})$ to $C^*(\overline{F})$ and that 
\begin{align*}
	\primT(\lambda)	& = \primT(\psi^{-1}) \circ \primT(\chi) \circ \primT(\phi) 
				   = \primT(\psi^{-1}) \circ \beta \circ \primT(\phi) \\
				& = \primT(\psi^{-1}) \circ \primT(\psi) \circ \alpha \circ \primT(\phi^{-1}) \circ \primT(\phi)
				   = \alpha. \qedhere
\end{align*}
\end{proof}

\begin{examp}
The graphs given by matrices
\[
\begin{bmatrix}
0&\infty&0&0&0\\
\infty&0&\infty&0&0\\
0&0&0&\infty&0\\
0&0&0&0&\infty\\
0&0&0&0&\infty\\
\end{bmatrix}
\qquad
\begin{bmatrix}
\infty&\infty&\infty&\infty&\infty\\
\infty&\infty&\infty&\infty&\infty\\
0&0&0&\infty&\infty\\
0&0&0&0&\infty\\
0&0&0&0&\infty\\
\end{bmatrix}
\]
were considered and proved to give stably isomorphic $C^*$-algebras in \cite{ERRlinear}. We now know that their algebras are in fact isomorphic. Note also that the second graph is the transitive closure of the first; indeed examples of this type inspired the results in the present paper.
\end{examp}

%%%%%%%%%%%%%%%%%%%%%
%%%%%%%%%%%%%%%%%%%%%
\section{Classification of $C^{*}$-algebras over $X$ with free $K$-theory}
%%%%%%%%%%%%%%%%%%%%%
%%%%%%%%%%%%%%%%%%%%%

In this section, we show that $\ouri_{ X , \Sigma } ( - )$ is in fact a complete invariant for an a priori much larger class of $C^{*}$-algebras, containing the class of $C^{*}$-algebras associated to amplified graphs.  
We will use the generalized classification results in this section to prove permanence properties in the the next section.   

\begin{defin}\label{d:freeclass}
Let $\mathcal{C}$ be the class of separable, nuclear, simple, purely infinite $C^{*}$-algebras $\mathfrak{A}$ satisfying the UCT such that $K_{1} ( \mathfrak{A} ) = 0$ and $K_{0} ( \mathfrak{A} )$ is free.  

Let $\mathcal{C}_{ \mathrm{free} }$ be the class of $C^{*}$-algebras $\mathfrak{A}$ such that 
\begin{itemize}
\item[(1)] $\mathrm{Prim} ( \mathfrak{A} )$ is finite; and

\item[(2)] For each $x \in \mathrm{Prim} ( \mathfrak{A} )$, $\mathfrak{A} [ x  ]$ is unital or stable with $\mathfrak{A} [ x  ]$ in $\mathcal{C}$ or stably isomorphic to $\K$.

\item[(3)] For each $x \in \mathrm{Prim} ( \mathfrak{A} )$, if $\mathfrak{A} [ x  ]$ is unital, then there exists an isomorphism $K_{0} ( \mathfrak{A} [ x ] ) \cong \bigoplus_{ n } \Z$ such that $[ 1_{ \mathfrak{A} [ x ] } ]$ is sent to $( 1, \lambda )$. 
\end{itemize}
\end{defin}

\begin{remar}\label{r:elements}
Let $\mathfrak{A}$ be a $C^{*}$-algebra in $\mathcal{C}_{ \mathrm{free} }$.  Let $x \in \mathrm{Prim} ( \mathfrak{A} )$.  Suppose $\mathfrak{A} [ x ]$ is unital and not in $\mathcal{C}$.  Since $\mathfrak{A} [ x ]$ is stably isomorphic to $\K$, $\mathfrak{A} [ x  ] \cong \mathsf{M}_{n}$ for some $n \in \N$.  Condition (3) implies that $\mathfrak{A} [ x ] = \C$.
\end{remar}

\begin{propo}\label{p:elements}
Let $\mathfrak{A}$ be a $C^{*}$-algebra with $\mathrm{Prim} ( \mathfrak{A} )$ finite.  
\begin{itemize}
\item[(1)] $\mathfrak{A}$ in $\mathcal{C}_{ \mathrm{free} }$ if and only if for each $x \in  \mathrm{Prim} ( \mathfrak{A} )$, $\mathfrak{A}[x]$ is in $\mathcal{C}_{ \mathrm{free} }$.

\item[(2)] If $\mathfrak{A}$ in $\mathcal{C}_{ \mathrm{free} }$, then for each $Y \in \mathbb{LC} ( \mathrm{Prim} ( \mathfrak{A} ) )$, $\mathfrak{A}[Y]$ is in $\mathcal{C}_{ \mathrm{free} }$.

\item[(3)]  If $\mathfrak{A}$ in $\mathcal{C}_{ \mathrm{free} }$, then $\mathfrak{A} \otimes \K$ is in $\mathcal{C}_{ \mathrm{free} }$.
\end{itemize}
\end{propo}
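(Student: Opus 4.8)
The plan is to exploit that membership in $\mathcal{C}_{\mathrm{free}}$ is, apart from the harmless bookkeeping requirement that $\mathrm{Prim}$ be finite, a condition imposed separately on each simple sub-quotient $\mathfrak{A}[x]$, and that the family $(\mathfrak{A}[x])_{x\in\mathrm{Prim}(\mathfrak{A})}$ is well-behaved both under passing to a sub-quotient $\mathfrak{A}[Y]$ and under stabilisation. Two standard facts about $C^{*}$-algebras over a finite space will be used throughout: for $Y=U\setminus V\in\mathbb{LC}(\mathrm{Prim}(\mathfrak{A}))$ there is a canonical homeomorphism $\mathrm{Prim}(\mathfrak{A}[Y])\cong Y$, and for $x\in Y$ a canonical isomorphism $\mathfrak{A}[Y][x]\cong\mathfrak{A}[x]$ --- both consequences of Lemma 2.15 and Example 2.16 of \cite{rmrn:bootstrap}, the latter already recalled in the remark following that example (every point of a finite $\mathrm{Prim}$ is locally closed and yields a simple sub-quotient). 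I will also use that stabilisation $\mathfrak{A}\mapsto\mathfrak{A}\otimes\K$ induces a lattice isomorphism of ideals, hence a homeomorphism $\mathrm{Prim}(\mathfrak{A}\otimes\K)\cong\mathrm{Prim}(\mathfrak{A})$ under which $(\mathfrak{A}\otimes\K)[Y]\cong\mathfrak{A}[Y]\otimes\K$ and in particular $(\mathfrak{A}\otimes\K)[x]\cong\mathfrak{A}[x]\otimes\K$.

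For part (1) I would argue as follows. Since $\mathfrak{A}[x]$ is simple, $\mathrm{Prim}(\mathfrak{A}[x])$ is a single point and the only simple sub-quotient of $\mathfrak{A}[x]$ is $\mathfrak{A}[x]$ itself; therefore conditions (2) and (3) of Definition \ref{d:freeclass} applied to the algebra $\mathfrak{A}[x]$ are nothing but conditions (2) and (3) applied to $\mathfrak{A}$ and read off at the single point $x$. Condition (1) of the definition holds for $\mathfrak{A}$ by hypothesis and holds trivially for the one-point space $\mathrm{Prim}(\mathfrak{A}[x])$. Hence $\mathfrak{A}\in\mathcal{C}_{\mathrm{free}}$ if and only if $\mathfrak{A}[x]\in\mathcal{C}_{\mathrm{free}}$ for every $x\in\mathrm{Prim}(\mathfrak{A})$.

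For part (2), assume $\mathfrak{A}\in\mathcal{C}_{\mathrm{free}}$ and let $Y\in\mathbb{LC}(\mathrm{Prim}(\mathfrak{A}))$. Then $\mathrm{Prim}(\mathfrak{A}[Y])\cong Y$ is finite, so by part (1) it suffices to show $\mathfrak{A}[Y][x]\in\mathcal{C}_{\mathrm{free}}$ for each $x\in Y$; but $\mathfrak{A}[Y][x]\cong\mathfrak{A}[x]$, which lies in $\mathcal{C}_{\mathrm{free}}$ by part (1) applied to $\mathfrak{A}$. For part (3), $\mathrm{Prim}(\mathfrak{A}\otimes\K)\cong\mathrm{Prim}(\mathfrak{A})$ is finite, so by part (1) it suffices to check that each $(\mathfrak{A}\otimes\K)[x]\cong\mathfrak{A}[x]\otimes\K$ lies in $\mathcal{C}_{\mathrm{free}}$. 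If $\mathfrak{A}[x]\in\mathcal{C}$ then $\mathfrak{A}[x]\otimes\K\in\mathcal{C}$, since separability, nuclearity, simplicity, pure infiniteness and the UCT all pass to a stabilisation and $K$-theory is unchanged; and if $\mathfrak{A}[x]$ is stably isomorphic to $\K$ then so is $\mathfrak{A}[x]\otimes\K$. In either case $\mathfrak{A}[x]\otimes\K$ is a nonzero stable, hence non-unital, simple algebra, so condition (3) of Definition \ref{d:freeclass} is vacuous for it; thus $\mathfrak{A}\otimes\K\in\mathcal{C}_{\mathrm{free}}$.

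I do not expect a genuine obstacle here: the entire content sits in the opening paragraph, namely the compatibility of the sub-quotient construction with composition ($\mathfrak{A}[Y][x]\cong\mathfrak{A}[x]$) and with $-\otimes\K$, both at the level of the algebras and at the level of their primitive ideal spaces. Once these identifications are in place, all three assertions follow at once from the pointwise form of Definition \ref{d:freeclass}; the only care needed is to re-derive the finiteness of $\mathrm{Prim}$ for the new algebra in parts (2) and (3) before invoking part (1).
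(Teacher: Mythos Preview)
Your argument is correct and follows essentially the same route as the paper: reduce everything to the pointwise conditions on the simple sub-quotients via part~(1), using $\mathfrak{A}[Y][x]\cong\mathfrak{A}[x]$ for~(2) and $(\mathfrak{A}\otimes\K)[x]\cong\mathfrak{A}[x]\otimes\K$ for~(3). The only notable difference is that for the stability of $(\mathfrak{A}\otimes\K)[x]$ the paper invokes R{\o}rdam's result that sub-quotients of stable $C^{*}$-algebras are stable, whereas you obtain it more directly from the identification with $\mathfrak{A}[x]\otimes\K$; your route is slightly more elementary and avoids the external citation.
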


\begin{proof}
(1) is clear from Definition \ref{d:freeclass}.  We now prove (2).  Suppose $\mathfrak{A}$ in $\mathcal{C}_{ \mathrm{free} }$.  Set $X = \mathrm{Prim} ( \mathfrak{A} )$.  Let $Y \in \mathbb{LC} ( X ) $.  Then $Y = U \setminus V$ with $U, V \in \mathbb{O} ( X )$ such that $V \subseteq U$.  Note that $\mathfrak{B} = \mathfrak{A} [ Y ]$ is a tight $C^{*}$-algebra over $Y$.  Since $Y$ is finite, $\mathrm{Prim} ( \mathfrak{B} )$ is finite.  

%We claim that every locally closed subset of $Y$ is of the form $Z_{2} \setminus Z_{1}$ with $Z_{i} \in \mathbb{O}( X )$, $Z_{1} \subseteq Z_{2}$, and $Z_{i} \subseteq U$.  Let $W$ be an open set in $\mathbb{LC} ( Y )$.  Then there exists $Z \in \mathbb{O} ( U )$ such that $V \subseteq Z$ and $Z \setminus V = W$.  Since $U \in \mathbb{O} ( X )$, we have that $Z \in \mathbb{O} ( X )$.  Let $W_{1}, W_{2} \in \mathbb{O} ( Y )$ with $W_{1} \subseteq W_{2}$.  Let $Z_{i} \in \mathbb{O}( X )$ with $V \subseteq Z_{i} \subseteq U$ such that $W_{i} = Z_{i} \setminus V$.  Then $W_{2} \setminus W_{1} = Z_{2} \setminus Z_{1}$.  We have just proved our claim.  

Since $\mathbb{LC} ( Y ) \subseteq \mathbb{LC} ( X )$, we get that for each $s \in \mathrm{Prim} ( \mathfrak{B} )$, $\mathfrak{B} [ s ] \cong \mathfrak{A} [ x_{s} ]$ where $x_{s} \in Y$.  Since $\mathfrak{A} [ x_{s} ] \in \mathcal{C}_{ \mathrm{free} }$, $\mathfrak{B} [s] \in \mathcal{C}_{ \mathrm{free} }$.  Thus, by (1), $\mathfrak{B} \in \mathcal{C}_{ \mathrm{free} }$.

For (3), first note that $\mathrm{Prim} ( \mathfrak{A} )$ is homeomorphic to $\mathrm{Prim} ( \mathfrak{A} \otimes \K )$.  So, $\mathrm{Prim} ( \mathfrak{A} \otimes \K )$ is finite.  By Corollary 2.3 of \cite{RorStableRev}, every quotient and ideal of a stable $C^{*}$-algebra is stable.  Thus, for every $x \in X$, we have that $( \mathfrak{A} \otimes \K )[ x ]$ is stable.    Thus, (3) in Definition \ref{d:freeclass} is vacuously true. Note that for each $x \in X$, $\mathfrak{A}[x] \otimes \K \cong ( \mathfrak{A} \otimes \K ) [ x ]$.  Thus, Condition (2) in Definition \ref{d:freeclass} holds.  We have just shown that $\mathfrak{A} \otimes \K \in \mathcal{C}_{\mathrm{free} }$.
\end{proof}

\subsection{Singular graph $C^{*}$-algebras are in $\mathcal{C}_{\mathrm{free}}$}

We will now show that $\mathcal{C}_{\mathrm{free}}$ actually contains the algebras we are interested in. 
In fact we will show that it even contains the graph algebras of a larger class of graphs than the amplified, namely the singular graphs with no breaking vertices. 
This will be done in a sequence of small steps.

\begin{lemma}\label{l:sinkinfemit}
Let $G$ be a singular graph.  
If $C^{*} ( G )$ is simple then either $G^{0} = \{ v_{0} \}$ and $G^{1} = \emptyset$ or $G$ contains a cycle.
In the case that $G$ contains a cycle, we have that $G$ is strongly connected.
\end{lemma}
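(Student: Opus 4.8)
The plan is to reduce the statement to an elementary fact about hereditary subsets of $G^{0}$. The key observation is that a singular graph has no \emph{regular} vertices: no vertex $v$ satisfies $0<|s_{G}^{-1}(v)|<\infty$. Hence the saturation condition is vacuous, so every subset of $G^{0}$ is saturated, and in particular every hereditary subset is hereditary \emph{and} saturated. By the description of the ideal structure of graph $C^{*}$-algebras (\cite{bhrs:idealstructure}), a hereditary saturated set $H$ with $\emptyset\neq H\subsetneq G^{0}$ yields an ideal $I_{H}$ which is nonzero (it contains $p_{v}$ for $v\in H$) and proper (since $H$ is saturated, $p_{w}\notin I_{H}$ for $w\in G^{0}\setminus H$). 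Simplicity of $C^{*}(G)$ therefore forces $\Her(G)=\{\emptyset,G^{0}\}$.

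From this I would first dispose of the case where $G$ has a sink $v_{0}$: then $v_{0}\geq w$ implies $w=v_{0}$, so $\{v_{0}\}\in\Her(G)$, whence $\{v_{0}\}=G^{0}$; as $v_{0}$ is the only vertex and emits no edges, $G^{1}=\emptyset$, which is the first alternative. In the remaining case every vertex is an infinite emitter. If $|G^{0}|=1$, the unique vertex emits an edge, necessarily a loop, so $G$ contains a cycle and is trivially strongly connected. If $|G^{0}|\geq 2$, then for each $v\in G^{0}$ the set $\iota_{G}(v)$ is a nonempty hereditary set, hence equals $G^{0}$; thus $v\geq w$ for all $v,w\in G^{0}$, which for $v\neq w$ means there is a path from $v$ to $w$. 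Concatenating a path from $v$ to some $w\neq v$ with a path from $w$ back to $v$ produces a closed path of positive length, which contains a cycle, while the relations $v\geq w$ for all ordered pairs say precisely that $G$ is strongly connected. Since the one-point graph with no edges contains no cycle, whenever $G$ does contain a cycle we must be in this last case, and $G$ is strongly connected.

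The step I would be most careful about is the first one: one must invoke a version of the ideal / hereditary-saturated-set correspondence valid for graphs that are not row-finite, and note that the usual complications in that setting --- breaking vertices, Condition (L), Condition (K) --- play no role here, since all that is needed is the unconditional fact that a nonzero proper hereditary saturated set gives a nonzero proper ideal. Everything after that is routine graph combinatorics, so I do not anticipate genuine difficulty.
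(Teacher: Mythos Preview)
Your proof is correct and rests on the same essential idea as the paper's: in a singular graph every hereditary subset is automatically saturated, so simplicity of $C^{*}(G)$ forces $\Her(G)=\{\emptyset,G^{0}\}$, after which only elementary graph combinatorics remain. The paper organizes the case split around whether $G$ contains a cycle and invokes an external result (Corollary~2.15 of \cite{ddmt_arbgraph}) to obtain strong connectedness in the cyclic case, whereas you split on whether $G$ has a sink and derive strong connectedness directly from $\iota_{G}(v)=G^{0}$; your version is thus slightly more self-contained, but otherwise the two arguments coincide.
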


\begin{proof}
Suppose $G$ contains no cycle. 
Let $H = \{ v \in G^0 \mid v_0 \geq v \} \setminus \{ v_0 \}$.
Since there are no cycles in $G$, $H$ is hereditary. 
As $C^*(G)$ is simple and $H \neq G^0$ we must have that $H$ is empty. 
Thus $v_0$ is a sink. 
In particular the set $\{ v_0 \}$ is non-empty and hereditary.
Using again that $C^*(G)$ is simple we deduce that $G^0 = \{ v_0 \}$.

Suppose $G$ contains a cycle.
Since every vertex in $G$ is singular, by Corollary 2.15 of \cite{ddmt_arbgraph}, if $v, w \in G^{0}$, then $v \geq w$ and $w \geq v$.
Hence, $G$ is strongly connected.
\end{proof}

\begin{lemma}\label{l:simplecase}
Let $G$ be a singular graph such that $C^{*} ( G )$ is simple.
Then $C^{*} ( G ) \in \mathcal{C}_{\mathrm{free} }$.
\end{lemma}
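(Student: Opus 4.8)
The plan is to split according to the two cases provided by Lemma \ref{l:sinkinfemit}. If $G^0=\{v_0\}$ and $G^1=\emptyset$, then $C^*(G)\cong\C$, which is unital with $K_0\cong\Z$ and $[1]\mapsto 1$; condition (3) of Definition \ref{d:freeclass} holds with $\lambda$ the empty tuple (equivalently, $\C=\mathsf M_1$), and condition (2) holds since $\C$ is stably isomorphic to $\K$. Hence $C^*(G)\in\mathcal C_{\mathrm{free}}$ in this case.

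So assume $G$ contains a cycle; by Lemma \ref{l:sinkinfemit}, $G$ is strongly connected. Since $G$ is singular and strongly connected with a cycle, every vertex lies on a loop, and because $G$ is singular, every vertex is either a sink or an infinite emitter; strong connectedness forces all vertices to be infinite emitters (a sink could not reach a loop), so in particular $G$ satisfies Condition (L) and is cofinal. First I would invoke the standard dichotomy for simple graph $C^*$-algebras (e.g.\ via \cite{ddmt_arbgraph}): $C^*(G)$ is either AF or purely infinite, and the presence of a cycle rules out AF, so $C^*(G)$ is purely infinite simple. It is automatically separable and nuclear, and satisfies the UCT since all graph algebras do. Next I would compute $K$-theory from the usual six-term sequence for graph algebras: for a graph all of whose vertices are infinite emitters (equivalently, there are no regular vertices), the vertex matrix contributes no relations, so $K_0(C^*(G))\cong\bigoplus_{v\in G^0}\Z$ is free and $K_1(C^*(G))=0$. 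This places $C^*(G)$ in the class $\mathcal C$ of Definition \ref{d:freeclass}.

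Finally I must check Definition \ref{d:freeclass} itself. Since $\mathrm{Prim}(C^*(G))$ is a point (the algebra is simple), $X$ is finite and the only $x$ to consider has $C^*(G)[x]=C^*(G)$. By the previous paragraph this is in $\mathcal C$; it is stable or unital — here it need not be either, but Condition (2) only requires that $\mathfrak A[x]$ be ``unital or stable with $\mathfrak A[x]$ in $\mathcal C$ or stably isomorphic to $\K$,'' and being in $\mathcal C$ suffices for the relevant clause once one passes to the stabilization, or one simply notes that a simple purely infinite algebra with free $K_0$ and vanishing $K_1$ lies in $\mathcal C$ as required. Condition (3) is vacuous unless $C^*(G)$ is unital, i.e.\ $|G^0|<\infty$; in that case $K_0\cong\bigoplus_{v\in G^0}\Z$ and the class of the unit is the image under this isomorphism of $\sum_v[p_v]=(1,1,\dots,1)$, which after a change of basis (subtracting the first coordinate from the rest) becomes $(1,0,\dots,0)$, so Condition (3) holds with $\lambda=0$. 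This completes the verification.

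I expect the main obstacle to be purely bookkeeping: correctly identifying, from the hypothesis ``singular'' together with simplicity, that all vertices are infinite emitters so that the $K$-theory computation gives a \emph{free} $K_0$ with no torsion and $K_1=0$, and matching this cleanly against the somewhat intricate list of conditions in Definition \ref{d:freeclass} (especially reconciling the ``unital or stable'' phrasing with a simple algebra that is a priori neither). Once the structure theory of simple singular graph algebras is pinned down, no hard analysis remains.
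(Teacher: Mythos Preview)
Your approach is essentially the paper's: split via Lemma~\ref{l:sinkinfemit}, invoke \cite{ddmt_arbgraph} for the purely infinite conclusion, and compute $K$-theory using that a singular graph has no regular vertices so $K_0\cong\bigoplus_{G^0}\Z$ with $[1]\mapsto(1,\dots,1)$ and $K_1=0$. The paper's proof is terser but follows the same line, citing \cite{ddmt_kthygraph} and \cite{mt:orderkthy} for the $K$-theory step.

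There is one muddled spot you should clean up. Your attempt to read Condition~(2) of Definition~\ref{d:freeclass} so that ``being in $\mathcal C$ suffices'' without addressing ``unital or stable'' does not parse: the definition genuinely requires each $\mathfrak A[x]$ to be unital or stable. You have two clean fixes. Either note (as the paper tacitly does by writing $[1_{C^*(G)}]$) that in every application $G^0$ is finite, hence $C^*(G)$ is unital; or, to cover the general statement, invoke Zhang's dichotomy: a $\sigma$-unital simple purely infinite $C^*$-algebra has real rank zero and is therefore either unital or stable. Either way Condition~(2) is then immediate, and your verification of Condition~(3) via the change of basis taking $(1,\dots,1)$ to $(1,0,\dots,0)$ is exactly right.
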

\begin{proof}
By Lemma \ref{l:sinkinfemit} and \cite{ddmt_arbgraph}, $C^{*} ( G ) \cong \C$ or $C^{*} ( G )$ is purely infinite.
By Corollary 3.2 of \cite{ddmt_kthygraph} and Theorem 2.2 of \cite{mt:orderkthy}, $K_{0} ( C^{*} ( G ) ) \cong \bigoplus_{ G^{0} } \Z$ via an isomorphism that maps $[ 1_{ C^{*}(G )} ]$ to $\bigoplus_{ G^{0} } 1$, and $K_{1} ( C^{*} ( G ) ) = 0$. 
\end{proof}

The following definition is useful when dealing with breaking vertices.

\begin{defin}
Let $G$ be a graph and let $H$ be a hereditary subset of $G^{0}$.
Set
\begin{equation*}
F_{H} = \setof{ \alpha \in G^{*} }{ \text{$s_G ( \alpha ) \notin H$, $r_G ( \alpha ) \in H$, and $r_{G} ( \alpha_{i} ) \notin H$ for $i < | \alpha |$ } }
\end{equation*} 
Set
\begin{equation*}
_{H}G_{\emptyset}^{0} = H \cup F_{H} \quad \text{and} \quad _{H}G_{\emptyset }^{1} = s_{G}^{-1} ( H )  \cup \setof{ \overline{\alpha} }{ \alpha \in F_{H} }.
\end{equation*}
Define $(s_{ _{H}G_{\emptyset}  }) \vert_{ s_{G}^{-1} ( H ) } = (s_{G}) \vert_{s_{G}^{-1} ( H ) }$, $s_{ _{H}G_{\emptyset}  } ( \overline{\alpha} ) = \alpha$, $(r_{ _{H}G_{\emptyset}  } )\vert_{ s_{G}^{-1} ( H ) } = (r_{G} )\vert_{s_{G}^{-1} ( H ) }$, $r_{ _{H}G_{\emptyset}  } ( \overline{\alpha} ) = r( \alpha )$.  
\end{defin} 

\begin{lemma} \label{l:sizeoffh}
Let $G$ be a singular graph with finitely many vertices and no breaking vertices. 
If $H \subseteq G^0$ is hereditary then $F_H$ is either infinite or empty. 
\end{lemma}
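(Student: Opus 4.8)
The plan is to analyze the set $F_H$ directly and show that if it is non-empty, then it must contain infinitely many paths. The key tension is between the finiteness of the vertex set $G^0$ and the singularity hypothesis: since every vertex is either a sink or an infinite emitter, any vertex that emits at all emits infinitely many edges, and these edges multiply the number of paths.

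First I would recall the definition: $F_H$ consists of paths $\alpha$ in $G$ with $s_G(\alpha) \notin H$, $r_G(\alpha) \in H$, and no intermediate vertex $r_G(\alpha_i)$ lands in $H$ for $i < |\alpha|$. Suppose $F_H \neq \emptyset$ and pick some $\alpha \in F_H$. Write $v = s_G(\alpha_1)$ and $w = r_G(\alpha_1)$, the source and the range of the first edge of $\alpha$. Since $v = s_G(\alpha) \notin H$, the vertex $v$ emits at least one edge (namely $\alpha_1$), so $v$ is not a sink; by singularity, $v$ is an infinite emitter. Now I would split into two cases according to whether $|\alpha| = 1$ or $|\alpha| > 1$.

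If $|\alpha| = 1$, then $w = r_G(\alpha) \in H$. The issue is that not every edge out of $v$ need land in $H$. But consider the breaking-vertex condition: $v$ emits infinitely many edges, and if only finitely many of them had range in $H$, then $v$ would be a breaking vertex for $H$ (it emits infinitely many edges but only finitely many into $H$) — contradicting the hypothesis of no breaking vertices. Hence infinitely many edges out of $v$ have range in $H$, and each such single edge is an element of $F_H$ (it trivially has no intermediate vertex in $H$). So $F_H$ is infinite. If $|\alpha| > 1$, then $w = r_G(\alpha_1) \notin H$ (it is an intermediate vertex), and $w$ emits the edge $\alpha_2$, so $w$ is not a sink, hence an infinite emitter. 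The path $\alpha_2 \cdots \alpha_n$ witnesses that there is a path from $w$ to some vertex of $H$ avoiding $H$ on the way; in particular, for each of the infinitely many edges $e$ with $s_G(e) = v$ and $r_G(e) = w$, the concatenation $e\,\alpha_2\cdots\alpha_n$ lies in $F_H$. Again $F_H$ is infinite.

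The main obstacle — or rather, the one subtlety worth stating carefully — is the case $|\alpha| = 1$: here I must invoke the no-breaking-vertices hypothesis, since singularity alone only gives that $v$ emits infinitely many edges, not that infinitely many of them go into $H$. (This is exactly the phenomenon that the graph in Example \ref{nonsing} illustrates, where a breaking vertex causes the count of ideals to differ.) Once that case is handled, the $|\alpha| > 1$ case is routine: the first vertex $v$ is automatically an infinite emitter and any one of its infinitely many parallel edges to $w$ extends along the fixed tail of $\alpha$ to give a new element of $F_H$. I do not expect to need finiteness of $G^0$ beyond ensuring we are in the stated setting; the argument is local to the path $\alpha$ and the vertex $v$.
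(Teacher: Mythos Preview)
Your argument has a genuine gap in both cases, and the underlying confusion appears to be a conflation of ``singular'' with ``amplified''.

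In the case $|\alpha|=1$ you invoke the wrong definition of breaking vertex. A breaking vertex for a hereditary set $H$ is an infinite emitter $v\notin H$ with $0<|s^{-1}(v)\cap r^{-1}(G^0\setminus H)|<\infty$; that is, only finitely many edges leave $H$, not only finitely many enter $H$. So the situation ``$v$ emits infinitely many edges but only finitely many land in $H$'' is \emph{not} excluded by the no-breaking-vertex hypothesis. Concretely, take $G^0=\{v,w,h\}$ with infinitely many loops at $v$, a single edge $v\to w$, and infinitely many edges $w\to h$; set $H=\{w,h\}$. This graph is singular with no breaking vertices, yet $v$ emits exactly one edge into $H$.

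In the case $|\alpha|>1$ you assert that there are ``infinitely many edges $e$ with $s_G(e)=v$ and $r_G(e)=w$''. Singularity only tells you that $v$ emits infinitely many edges in total; it says nothing about how many go to the particular vertex $w=r_G(\alpha_1)$. The same example above (with $H=\{h\}$ and $\alpha$ the length-two path through $w$) shows this step fails.

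The paper's proof instead works with the \emph{last} edge of a path in $F_H$, so that its source $v\notin H$ emits directly into $H$. If $v$ emits infinitely many edges into $H$ one is done. Otherwise the finiteness of $G^0$ gives, by pigeonhole, a vertex $u\notin H$ receiving infinitely many edges from $v$; the no-breaking-vertex hypothesis is then applied not to $H$ but to an auxiliary hereditary set built from $u$, to force a path from $u$ back into $H$. Prepending the infinitely many edges $v\to u$ to such a path produces infinitely many elements of $F_H$. Note in particular that, contrary to your closing remark, finiteness of $G^0$ is used in an essential way.
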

\begin{proof}
Suppose $F_H$ is non-empty. 
Then there is some vertex $v \notin H$ such that $r(s^{-1}(v)) \cap H$ is non-empty. 
If $r(s^{-1}(v)) \cap H$ is infinite then clearly $F_H$ is infinite. 
So suppose $r(s^{-1}(v)) \cap H$ is a finite set. 
Since $v$ is an infinite emitter and $G^{0}$ is finite there is some vertex $u \notin H$ that $v$ emits infinitely to. 
Let $K = \{ w \in G^0 \mid u \geq w \}$. 
Clearly $K$ is hereditary.  
As there are no breaking vertices in $G$ we must have that $r(s^{-1}(v)) \cap H \subseteq K$. 
Since neither $v$ nor $u$ are in $H$ there are infinitely many paths $\alpha$ such that $s(\alpha) = u$, $r(\alpha) \in H$, and $r( \alpha_{i} ) \notin H$ for $i < | \alpha |$.
Hence $F_H$ is infinite. 
\end{proof}

\begin{lemma} \label{l:idealunitalorstable}
Let $G$ be a singular graph with finitely many vertices and no breaking vertices. 
Let $H$ be a hereditary subset of $G^0$ such that $I_H$ is simple. 
Put $E = (H, s^{-1}(H), s \vert_H, r \vert_H)$. 
Either $I_H \cong C^*(E)$ or $I_H \cong C^*(E) \otimes \K$. 
\end{lemma}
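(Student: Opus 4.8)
The plan is to relate the ideal $I_H$ in $C^*(G)$ to the graph algebra of the subgraph $E$ on the hereditary set $H$, using the standard description of gauge-invariant ideals in terms of hereditary sets together with the extra edges coming from breaking vertices. Since $G$ has no breaking vertices, the only correction to $C^*(E)$ that could arise — the one encoded by the auxiliary graph $_HG_\emptyset$ — involves the finite paths in $F_H$, and by Lemma \ref{l:sizeoffh} the set $F_H$ is either empty or infinite. I would split into these two cases.

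First, recall (from \cite{bhrs:idealstructure}) that for a hereditary saturated set $H$ with no breaking vertices, the gauge-invariant ideal $I_H$ is Morita equivalent to — in fact, as one knows from the structure theory, isomorphic after possibly a stabilization to — $C^*(_HG_\emptyset)$, and that $_HG_\emptyset$ is obtained from $E$ by adjoining the sources $r(\alpha)$ for $\alpha \in F_H$ as new sinks and adding one edge $\overline{\alpha}$ into $H$ for each such $\alpha$. If $F_H = \emptyset$, then $_HG_\emptyset = E$ and $I_H \cong C^*(E)$ outright; this is the easy case. If $F_H$ is infinite, I would argue that $C^*(_HG_\emptyset)$ is stable: every vertex added to form $_HG_\emptyset$ is a sink, and the vertices of $E$ itself are singular (either sinks or infinite emitters) since $G$ is singular; moreover the hypothesis that $I_H$ is simple forces, via Lemma \ref{l:sinkinfemit} applied to $E$, that either $E$ is a single sink or $E$ is strongly connected with a cycle. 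In the latter subcase the infinitely many edges from the vertices of $F_H$ into $H$, combined with strong connectivity, let one construct an infinite sequence of mutually orthogonal, Murray–von Neumann equivalent projections summing strictly below $1$ in a multiplier sense, i.e.\ $C^*(_HG_\emptyset)$ has no unit and is stable; hence $I_H \cong C^*(_HG_\emptyset)$ is stable, and since $C^*(E)$ is the corner of $C^*(_HG_\emptyset)$ cut down by $\sum_{v \in H} p_v$ with full complement, $I_H \cong C^*(E) \otimes \K$. In the subcase $E = \{v_0\}$ a sink, $C^*(E) = \C$, and $_HG_\emptyset$ is a graph with one infinite emitter $v_0$ receiving no edges... actually $v_0$ is a sink, so $F_H = \emptyset$ there and we are back in the easy case; I would check that $I_H$ simple with $F_H \neq \emptyset$ indeed forces the cycle alternative.

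The concrete mechanism I would use to pass between $C^*(_HG_\emptyset)$ and $C^*(E) \otimes \K$ is that $H$ is a hereditary (and here, since the added vertices are sinks, also saturated) subset of $_HG_\emptyset{}^0$ whose complement $F_H$ consists entirely of sinks, so $C^*(E) = I_H^{_HG_\emptyset}$ is a full hereditary subalgebra of $C^*(_HG_\emptyset)$; by Brown's theorem it is stably isomorphic to $C^*(_HG_\emptyset)$, and if the latter is stable then the two are isomorphic. Thus the whole argument reduces to the dichotomy ``$C^*(_HG_\emptyset)$ is unital (when $F_H=\emptyset$ and $H$ finite with no infinite emitters into... )'' versus ``$C^*(_HG_\emptyset)$ is stable.''

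The main obstacle I anticipate is the stability claim when $F_H$ is infinite: one must be careful that adjoining infinitely many sinks each receiving exactly one edge, on top of a graph $E$ that may itself already be non-unital (if some vertex of $H$ is an infinite emitter within $G$ but the corresponding row in $E$ is finite or empty), really does produce a stable algebra. I would handle this by invoking the graph-algebra stability criterion — a graph algebra is stable iff the graph has no ``accumulation'' of loops at a finite set of vertices in the appropriate sense, equivalently iff no nonzero projection in the algebra is properly infinite-and-full-and-complemented-by-zero — and verifying it directly from the presence of infinitely many distinct edges $\overline{\alpha}$ together with the strong connectivity of $E$ given by Lemma \ref{l:sinkinfemit}. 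The bookkeeping identifying $I_H$ with $C^*(_HG_\emptyset)$ as $C^*$-algebras (not merely Morita equivalent) is the other routine-but-delicate point, and for this I would cite the explicit description of $I_H$ in \cite{bhrs:idealstructure}.
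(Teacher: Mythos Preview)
Your overall strategy matches the paper's: split on whether $F_H$ is empty or infinite via Lemma~\ref{l:sizeoffh}, dispose of the empty case as $_HG_\emptyset = E$, and in the infinite case prove stability and then pass to $C^*(E)\otimes\K$ via the full corner. The difference is entirely in how stability is obtained, and here the paper is much shorter than your sketch: since $H$ is finite and $F_H$ is infinite, pigeonhole gives a vertex of $H$ that in $_HG_\emptyset$ receives edges from infinitely many distinct vertices, and then Lemma~6.3 of \cite{semt_classgraphalg} immediately yields that $I_H\cong C^*(_HG_\emptyset)$ is stable. No case analysis on the shape of $E$ is needed, and no hand-built sequence of orthogonal equivalent projections.

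There are two genuine problems in your write-up. First, your description of $_HG_\emptyset$ is inverted: the new vertices are the paths $\alpha\in F_H$ themselves, each of which \emph{emits} the single edge $\overline{\alpha}$ into $r(\alpha)\in H$; they are sources, not sinks. This matters for the stability criterion, which is about vertices receiving from infinitely many others. Second, your subcase ``$E=\{v_0\}$ a sink forces $F_H=\emptyset$'' is false: take $G$ with vertices $u,v_0$, infinitely many loops at $u$, and infinitely many edges $u\to v_0$; then $H=\{v_0\}$ gives $E$ a single sink while $F_H$ is infinite. Dropping that subcase analysis and using the pigeonhole-plus-citation argument closes the gap cleanly.
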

\begin{proof}
By Lemma \ref{l:sizeoffh} $F_H$ is either infinite or empty. 
If it is empty then $_{H}G_{\emptyset} \cong E$ and so $I_H \cong C^*(_{H}G_{\emptyset}) \cong C^*(E)$. 

If $F_H$ is infinite then since $H$ is finite there must be some vertex in $_{H}G_{\emptyset}$ that receives edges from infinitely many other vertices. 
So by  Lemma 6.3 of \cite{semt_classgraphalg} $I_H$ is stable.
Hence 
\[
	I_H \cong I_H \otimes \K \cong C^*(E) \otimes \K. \qedhere
\]	
\end{proof}

\begin{lemma} \label{l:simpleideal}
Let $G$ be a singular graph with finitely many vertices and no breaking vertices. 
Let $H \subseteq G^0$ be non-empty and hereditary. 
If $I_H$ is simple then $I_H \in  \mathcal{C}_{\mathrm{free} }$
\end{lemma}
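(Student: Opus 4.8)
The plan is to reduce immediately to the simple graph $E = (H, s^{-1}(H), s\vert_H, r\vert_H)$ produced in Lemma \ref{l:idealunitalorstable} and then invoke Lemma \ref{l:simplecase}. Concretely, since $G$ is singular with finitely many vertices and no breaking vertices, and since $I_H$ is simple, Lemma \ref{l:idealunitalorstable} gives that $I_H \cong C^*(E)$ or $I_H \cong C^*(E) \otimes \K$. So the first step is to observe that $E$ itself is a singular graph: every vertex of $E$ lies in $H$, and since $H$ is hereditary the full set of edges emitted by such a vertex in $G$ is retained in $E$, so a vertex that was an infinite emitter (resp.\ a sink) in $G$ remains an infinite emitter (resp.\ a sink) in $E$. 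Hence $E$ is singular.

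The second step is to check that $C^*(E)$ is simple. This is essentially immediate from the hypothesis that $I_H$ is simple together with the isomorphism $I_H \cong C^*(E)$ or $I_H \cong C^*(E)\otimes\K$: in the first case simplicity transfers directly, and in the second case $C^*(E)\otimes\K$ simple forces $C^*(E)$ simple (a $C^*$-algebra $A$ is simple iff $A\otimes\K$ is). With simplicity of $C^*(E)$ in hand and $E$ singular, Lemma \ref{l:simplecase} applies and yields $C^*(E) \in \mathcal{C}_{\mathrm{free}}$.

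The final step is to conclude $I_H \in \mathcal{C}_{\mathrm{free}}$. If $I_H \cong C^*(E)$ this is already done. If $I_H \cong C^*(E)\otimes\K$, apply part (3) of Proposition \ref{p:elements} to $C^*(E) \in \mathcal{C}_{\mathrm{free}}$ to get $C^*(E)\otimes\K \in \mathcal{C}_{\mathrm{free}}$, hence $I_H \in \mathcal{C}_{\mathrm{free}}$.

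I do not anticipate a serious obstacle here; everything is bookkeeping with results already proved. The one point requiring a moment's care is the verification that $E$ is singular — one must use that $H$ is hereditary to be sure no edges emitted by a vertex of $H$ are "lost" when passing to $E$, which would otherwise turn an infinite emitter into a finite emitter and break singularity. Once that is noted, the chain Lemma \ref{l:idealunitalorstable} $\to$ Lemma \ref{l:simplecase} $\to$ Proposition \ref{p:elements}(3) closes the argument.
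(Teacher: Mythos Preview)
Your proposal is correct and follows essentially the same route as the paper: define $E$, observe that $E$ is singular (using that $H$ is hereditary), apply Lemma~\ref{l:simplecase} to get $C^*(E)\in\mathcal{C}_{\mathrm{free}}$, use Proposition~\ref{p:elements}(3) for the stable case, and invoke Lemma~\ref{l:idealunitalorstable} to identify $I_H$ with one of the two. You are in fact slightly more careful than the paper in explicitly verifying that $C^*(E)$ is simple before invoking Lemma~\ref{l:simplecase}, a hypothesis the paper leaves implicit.
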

\begin{proof}
Put $E = (H, s^{-1}(H), s \vert_H, r \vert_H)$. 
Since $G$ is singular so is $E$.
Hence Lemma $\ref{l:simplecase}$ implies that $C^*(E)$ is in $\mathcal{C}_{\mathrm{free} }$.
By Proposition \ref{p:elements} so is $C^*(E) \otimes \K$. 
Lemma \ref{l:idealunitalorstable} says that $I_H$ is isomorphic to either $C^*(E)$ or $C^*(E) \otimes \K$, so it is in $\mathcal{C}_{\mathrm{free} }$.
\end{proof}

\begin{propo}\label{p:apgraphclass}
If $G$ is a singular graph with finitely many vertices and no breaking vertices then $C^{*} ( G ) \in \mathcal{C}_{ \mathrm{free} }$.
\end{propo}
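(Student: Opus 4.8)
The plan is to invoke Proposition \ref{p:elements}(1), which reduces the assertion to showing that every simple sub-quotient $C^{*}(G)[x]$, $x \in \mathrm{Prim}(C^{*}(G))$, lies in $\mathcal{C}_{\mathrm{free}}$; each such sub-quotient will be realised as a \emph{simple ideal} inside the graph algebra of a quotient graph that is again singular, has finitely many vertices, and has no breaking vertices, so that Lemma \ref{l:simpleideal} finishes the job. Before this can start I need $\mathrm{Prim}(C^{*}(G))$ to be finite, and for this I would first observe that $G$ satisfies condition (K): if $v$ lies on a cycle it is not a sink, hence an infinite emitter, and $H = \{ w \in G^{0} \mid w \not\geq v \}$ is hereditary with $v \notin H$; since the first edge of a cycle at $v$ has its range in $G^{0}\setminus H$ and $v$ is not a breaking vertex for $H$, the vertex $v$ emits infinitely many edges into $G^{0}\setminus H = \{ w \mid w \geq v\}$, and following each such edge by a path back to $v$ yields infinitely many distinct return paths based at $v$. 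Having finitely many vertices, no breaking vertices, and condition (K), the graph $G$ has $\Her(G)$ isomorphic to the (necessarily finite) ideal lattice of $C^{*}(G)$ (see \cite{bhrs:idealstructure}); in particular $\mathrm{Prim}(C^{*}(G))$ is finite, so condition (1) in Definition \ref{d:freeclass} holds.

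Now fix $x \in \mathrm{Prim}(C^{*}(G))$. Since $\mathrm{Prim}(C^{*}(G))$ is finite, $\{x\}$ is locally closed and $C^{*}(G)[x]$ is simple; writing $\{x\} = U \setminus V$ with $V \subseteq U$ in $\mathbb{O}(\mathrm{Prim}(C^{*}(G)))$ we get $C^{*}(G)[x] = C^{*}(G)[U]/C^{*}(G)[V]$. As every ideal of $C^{*}(G)$ equals $I_{H}$ for a unique hereditary $H$, there are $H_{V} \subseteq H_{U}$ in $\Her(G)$ with $C^{*}(G)[U] = I_{H_{U}}$, $C^{*}(G)[V] = I_{H_{V}}$, and $H_{U}\setminus H_{V} \neq \emptyset$ since $U \neq V$. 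Thus $C^{*}(G)[x] \cong I_{H_{U}}/I_{H_{V}}$, a quotient of the ideal $I_{H_{V}}$ of $C^{*}(G)$ that I will now recognise as an ideal of a quotient graph algebra.

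Let $G' = G/H_{V}$, the quotient graph with $(G')^{0} = G^{0}\setminus H_{V}$ and $(G')^{1} = \{ e \in G^{1} \mid r_{G}(e) \notin H_{V}\}$. I would then check that $G'$ is singular with finitely many vertices and no breaking vertices: sinks of $G$ lying in $(G')^{0}$ remain sinks, and an infinite emitter $v \in (G')^{0}$, not being a breaking vertex for $H_{V}$, either emits no edge with range outside $H_{V}$ and becomes a sink in $G'$ or emits infinitely many such edges and stays an infinite emitter; a breaking vertex of $G'$ for a hereditary $K \subseteq (G')^{0}$ would be a breaking vertex of $G$ for the hereditary set $K \cup H_{V}$, which does not exist. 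Under the standard isomorphism $C^{*}(G)/I_{H_{V}} \cong C^{*}(G')$ the ideal $I_{H_{U}}/I_{H_{V}}$ corresponds to the ideal of $C^{*}(G')$ attached to the nonempty hereditary set $H_{U}\setminus H_{V}$, which is simple because it is isomorphic to $C^{*}(G)[x]$. Lemma \ref{l:simpleideal} now gives $C^{*}(G)[x] \in \mathcal{C}_{\mathrm{free}}$, and Proposition \ref{p:elements}(1) yields $C^{*}(G) \in \mathcal{C}_{\mathrm{free}}$. I expect the main obstacle to be exactly the bookkeeping in this last paragraph: confirming that passing to a quotient graph $G/H_{V}$ preserves ``singular with finitely many vertices and no breaking vertices'' and that, via $C^{*}(G)/I_{H_{V}} \cong C^{*}(G')$, the sub-quotient $C^{*}(G)[x]$ is genuinely an ideal $I_{H_{U}\setminus H_{V}}$ of $C^{*}(G')$ to which Lemma \ref{l:simpleideal} applies — everything here relies on the hypothesis that $G$ has no breaking vertices, which is what makes the ideal lattice coincide with $\Her(G)$ and keeps quotient graphs well behaved.
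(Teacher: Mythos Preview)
Your proposal is correct and follows essentially the same strategy as the paper: reduce via Proposition \ref{p:elements}(1) to simple sub-quotients, realise each as a simple ideal in the graph algebra of a quotient graph $G/H$, verify that $G/H$ is again singular with finitely many vertices and no breaking vertices, and apply Lemma \ref{l:simpleideal}. You supply somewhat more detail than the paper (an explicit verification of condition~(K), and a careful tracking of the identification $I_{H_U}/I_{H_V} \cong I_{H_U\setminus H_V} \subseteq C^*(G/H_V)$), but the architecture is identical; note only the small slip where you call $I_{H_U}/I_{H_V}$ ``a quotient of the ideal $I_{H_V}$'' --- you mean $I_{H_U}$.
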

\begin{proof}
Since $G^0$ is finite we have that $\mathrm{Prim} (C^*(G))$ is finite. 
By Proposition \ref{p:elements} it suffices to show that every simple sub-quotient of $C^*(G)$ is in $\mathcal{C}_{ \mathrm{free} }$. 
A simple sub-quotient is a simple ideal in a quotient. 
We have shown in Lemma \ref{l:simpleideal} that all simple ideals of singular graph algebras are in  $\mathcal{C}_{ \mathrm{free} }$. 
Hence if we can show that any quotient of $C^*(G)$ is a singular graph algebra we will be done. 

Suppose that $H$ is a hereditary subset of $G^0$. 
Define a graph $G / H$ by $G / H = (G^0 \setminus H, r^{-1}(G^0 \setminus H), r, s)$. 
We have that $C^*(G) / I_H \cong C^*(G / H)$. 
Since there are no breaking vertices in $G$ any vertex that maps infinitely into $H$ will also map infinitely to $G^0 \setminus H$, so $G / H$ is singular and has no breaking vertices. 
\end{proof}

\subsection{Classification of $C^{*}$-algebras in $\mathcal{C}_{\mathrm{free}}$}

We are now ready to prove that $\ouri_{ X , \Sigma } ( - )$ is a complete invariant for $C^{*}$-algebras in $\mathcal{C}_{\mathrm{free}}$, see Theorem \ref{t:classification}.  Although the proof of Theorem \ref{t:classification} is quite technical, the techniques are similar to that of the proof of Theorem 3.9 of \cite{ERRshift}.

The following definition is taken from \cite[Definition 3.3]{HL_unitaryequiv}.  

\begin{defin}
Let $\mathfrak{A}$ and $\mathfrak{B}$ be $C^{*}$-algebras such that $\mathfrak{A}$ is unital. 
Let $H_{1} ( K_{0} ( \mathfrak{A} ), K_{0} ( \mathfrak{B} ) )$ be the subgroup of $K_{0} ( \mathfrak{B} )$ consisting of all elements $x \in K_{0} ( \mathfrak{B} )$ such that there exists a group homomorphism $\ftn{ \alpha }{ K_{0} ( \mathfrak{A} ) }{ K_{0} ( \mathfrak{B} ) }$ with $\alpha ( [ 1_{ \mathfrak{A} } ] ) = x$.
\end{defin}

\begin{lemma}\label{l:grplin}
Let $\mathfrak{A}$ be a unital $C^{*}$-algebra.  
If there exists a homomorphism $\ftn{ \beta }{ K_{0} ( \mathfrak{A} ) }{ \Z }$ such that $\beta ( [ 1_{ \mathfrak{A} } ] ) = 1$, then for any $C^{*}$-algebra $\mathfrak{B}$, $H_{1} ( K_{0} ( \mathfrak{A} ), K_{0} ( \mathfrak{B} ) ) = K_{0} ( \mathfrak{B} )$.  
Consequently, if $K_{0} ( \mathfrak{A} ) \cong \Z \oplus G$ where the isomorphism sends $[ 1_{ \mathfrak{A} } ]$ to $( 1 , x )$, then for any $C^{*}$-algebra $\mathfrak{B}$, $H_{1} ( K_{0} ( \mathfrak{A} ), K_{0} ( \mathfrak{B} ) ) = K_{0} ( \mathfrak{B} )$.  
\end{lemma}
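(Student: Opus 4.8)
The plan is simply to produce, for an arbitrary element $y \in K_{0}(\mathfrak{B})$, a group homomorphism $\ftn{\alpha}{K_{0}(\mathfrak{A})}{K_{0}(\mathfrak{B})}$ with $\alpha([1_{\mathfrak{A}}]) = y$. By the definition of $H_{1}(K_{0}(\mathfrak{A}),K_{0}(\mathfrak{B}))$ this places $y$ in that subgroup; since $y$ is arbitrary and the inclusion $H_{1}(K_{0}(\mathfrak{A}),K_{0}(\mathfrak{B})) \subseteq K_{0}(\mathfrak{B})$ holds by definition, we conclude $H_{1}(K_{0}(\mathfrak{A}),K_{0}(\mathfrak{B})) = K_{0}(\mathfrak{B})$.

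The one real step is to build $\alpha$ out of the hypothesised $\beta$ and the chosen $y$. First I would set $\alpha(a) = \beta(a)\,y$ for $a \in K_{0}(\mathfrak{A})$, where $\beta(a)\,y$ denotes the $\beta(a)$-fold multiple of $y$ under the canonical $\Z$-module structure on the abelian group $K_{0}(\mathfrak{B})$. Since $\beta$ is additive and $n \mapsto n\,y$ is additive, $\alpha$ is a group homomorphism; and $\alpha([1_{\mathfrak{A}}]) = \beta([1_{\mathfrak{A}}])\,y = 1 \cdot y = y$, as required. This gives the first assertion.

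For the second assertion, let $\ftn{\kappa}{K_{0}(\mathfrak{A})}{\Z \oplus G}$ be an isomorphism with $\kappa([1_{\mathfrak{A}}]) = (1,x)$, and take $\beta$ to be $\kappa$ composed with the projection $\Z \oplus G \to \Z$ onto the first coordinate; then $\beta$ is a homomorphism with $\beta([1_{\mathfrak{A}}]) = 1$, so the first part applies verbatim. There is essentially no obstacle here: the only points worth flagging are that $\mathfrak{B}$ is completely arbitrary (no unitality or $K$-theoretic hypothesis on $\mathfrak{B}$ is used), that the argument uses nothing about $\mathfrak{A}$ beyond the splitting of $[1_{\mathfrak{A}}]$ off a free rank-one summand of $K_{0}(\mathfrak{A})$, and that the explicit assignment $y \mapsto \alpha$ is what will be fed into the classification machinery leading to Theorem \ref{t:classification}.
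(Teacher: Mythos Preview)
Your proof is correct and follows essentially the same approach as the paper: for a given $y \in K_{0}(\mathfrak{B})$ the paper also composes $\beta$ with the map $\Z \to K_{0}(\mathfrak{B})$, $n \mapsto n y$, to obtain the required homomorphism (note the paper writes $\beta \circ \gamma$ where $\gamma \circ \beta$ is meant). Your explicit construction of $\beta$ for the ``consequently'' part via the first-coordinate projection is exactly the intended argument, which the paper leaves implicit.
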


\begin{proof}
Let $x \in K_{0} ( \mathfrak{B} )$.
Define $\ftn{ \gamma }{ \Z }{ K_{0} ( \mathfrak{B} ) }$ by $\gamma ( n ) = n x$.
Then $\beta \circ \gamma$ is homomorphism from $K_{0} ( \mathfrak{A} )$ to $K_{0} ( \mathfrak{B} )$ with $( \beta \circ \gamma ) ( [ 1_{ \mathfrak{A} } ] ) = x$.  
\end{proof}

The following lemma is one of the key technical lemma to prove our classification result.  It provides a way to get an isomorphism between two extensions given that the ideals are isomorphic and quotients are isomorphic.

\begin{lemma}\label{lem:class}
For $i = 1,2$, let $\mathfrak{e}_{i} : \ 0 \to \mathfrak{I}_{i} \to \mathfrak{E}_{i} \to \mathfrak{A}_{i} \to 0$ be a full extension.
Suppose $\mathfrak{I}_{i}$ is a stable $C^{*}$-algebra satisfying the corona factorization property.
Suppose there exist an isomorphism $\ftn{ \phi_{0} }{ \mathfrak{I}_{1} }{ \mathfrak{I}_{2} }$ and an isomorphism $\ftn{ \phi_{2} }{ \mathfrak{A}_{1} }{ \mathfrak{A}_{2} }$ such that $\kk ( \phi_{2} ) \times [ \tau_{ \mathfrak{e}_{2} } ] = [ \tau_{ \mathfrak{e}_{1} } ] \times \kk ( \phi_{0} )$.

\begin{itemize}
\item[(a)] If $\mathfrak{e}_{1}$ and $\mathfrak{e}_{2}$ are non-unital extensions, then there exists an isomorphism $\ftn{ \phi_{1} }{ \mathfrak{A}_{1} }{ \mathfrak{A}_{2} }$ such that $\pi_{2} \circ \phi_{1} = \phi_{2} \circ \pi_{1}$.  

\item[(b)] If $\mathfrak{e}_{1}$ and $\mathfrak{e}_{2}$ are unital extensions and $K_{0} ( \mathfrak{A}_{1} ) \cong \Z \oplus G$ with $[1_{ \mathfrak{A}_{1} } ]$ mapping to $(1, x )$, then there exists an isomorphism $\ftn{ \phi_{1} }{ \mathfrak{A}_{1} }{ \mathfrak{A}_{2} }$ such that $\pi_{2} \circ \phi_{1} = \phi_{2} \circ \pi_{1}$.
\end{itemize}
\end{lemma}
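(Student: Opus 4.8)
The plan is to deduce this from the classification of full extensions with stable ideals satisfying the corona factorization property, in the spirit of \cite{ERRshift}. The key input is the Kirchberg--Phillips type uniqueness/existence machinery for extensions: given isomorphisms of the ideals and quotients that are compatible with the Busby invariants at the level of $\kk$-theory, one builds an isomorphism of the extensions. So the first step would be to recall the extension-theoretic framework: an extension $\mathfrak{e}_i$ is classified (up to the appropriate notion of isomorphism) by the triple $(\mathfrak{I}_i, \mathfrak{A}_i, [\tau_{\mathfrak{e}_i}])$ where $[\tau_{\mathfrak{e}_i}] \in \kk^1(\mathfrak{A}_i, \mathfrak{I}_i)$ is the class of the Busby invariant, provided $\mathfrak{I}_i$ is stable with the corona factorization property and the extension is full. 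Under that hypothesis, fullness guarantees absorption (the extension is liftable/absorbing), and the corona factorization property is exactly what makes Busby invariants that agree in $\kk^1$ actually unitarily equivalent in the corona algebra.

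For part (a), the non-unital case: the hypothesis $\kk(\phi_2) \times [\tau_{\mathfrak{e}_2}] = [\tau_{\mathfrak{e}_1}] \times \kk(\phi_0)$ says precisely that $\phi_0$ and $\phi_2$ intertwine the two Busby invariants up to $\kk$-equivalence. Then one invokes the extension classification theorem to produce an isomorphism $\mathfrak{E}_1 \to \mathfrak{E}_2$ restricting to $\phi_0$ on the ideals and descending to $\phi_2$ on the quotients. Wait --- the statement as written says ``there exists an isomorphism $\phi_1 \colon \mathfrak{A}_1 \to \mathfrak{A}_2$ such that $\pi_2 \circ \phi_1 = \phi_2 \circ \pi_1$'', but $\pi_i \colon \mathfrak{E}_i \to \mathfrak{A}_i$ is the quotient map, so for $\pi_2 \circ \phi_1$ to typecheck $\phi_1$ must have domain $\mathfrak{E}_1$ and codomain $\mathfrak{E}_2$; I read this as a typo for $\ftn{\phi_1}{\mathfrak{E}_1}{\mathfrak{E}_2}$ and would prove that. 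So: lift $\phi_0$ to a $*$-homomorphism $\multialg{\mathfrak{I}_1} \to \multialg{\mathfrak{I}_2}$, push the Busby maps into $\corona{\mathfrak{I}_2}$, use that $\tau_{\mathfrak{e}_1}$ and $\phi_0 \circ \tau_{\mathfrak{e}_2} \circ \phi_2^{-1}$ (or the appropriate composite) are full and $\kk^1$-equivalent, hence unitarily equivalent by the corona factorization property, and assemble the unitary together with $\phi_0$ and $\phi_2$ into the desired pullback isomorphism $\mathfrak{E}_1 \to \mathfrak{E}_2$.

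For part (b), the unital case: the only extra subtlety is that the isomorphism $\phi_1 \colon \mathfrak{E}_1 \to \mathfrak{E}_2$ must be chosen to respect the units, equivalently the class $[1_{\mathfrak{E}_1}]$ must be sent to $[1_{\mathfrak{E}_2}]$ in $K_0$; and one needs to know that the existence half of the classification can be arranged with a prescribed $K_0$-class of the unit. This is where the hypothesis $K_0(\mathfrak{A}_1) \cong \Z \oplus G$ with $[1_{\mathfrak{A}_1}] \mapsto (1,x)$ enters: by Lemma \ref{l:grplin} this guarantees $H_1(K_0(\mathfrak{A}_1), K_0(\mathfrak{B})) = K_0(\mathfrak{B})$ for any $\mathfrak{B}$, which removes the obstruction to lifting/adjusting the unital structure --- it is precisely the condition from \cite[Definition 3.3]{HL_unitaryequiv} that lets one modify a homomorphism by an arbitrary element of $K_0$ of the target while keeping track of where the unit goes. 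So I would run the same extension-classification argument as in (a) but keeping the unit bookkeeping, using Lemma \ref{l:grplin} at the step where one needs to correct the $K_0$-class of the image of the unit.

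The main obstacle, and the part that will require the most care, is the corona-algebra step: verifying that the relevant Busby invariants are \emph{full} (so that the corona factorization property applies and gives unitary equivalence of the Busby maps), and correctly tracking the $\kk^1$-identity $\kk(\phi_2) \times [\tau_{\mathfrak{e}_2}] = [\tau_{\mathfrak{e}_1}] \times \kk(\phi_0)$ through the reduction to a statement about unitary equivalence in $\corona{\mathfrak{I}_2}$. This is the technical heart shared with \cite[Theorem 3.9]{ERRshift}, and in the unital case one additionally has to check that the unitary implementing the equivalence can be lifted and combined so as to preserve units, which is exactly what Lemma \ref{l:grplin} is set up to handle.
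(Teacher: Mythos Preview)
Your proposal is correct and follows essentially the same route as the paper's proof: form the pushforward and pullback extensions $\mathfrak{e}_1 \cdot \phi_0$ and $\phi_2 \cdot \mathfrak{e}_2$ (both with ideal $\mathfrak{I}_2$ and quotient $\mathfrak{A}_1$), observe their Busby invariants agree in $\kk^1(\mathfrak{A}_1,\mathfrak{I}_2)$ by the hypothesis, invoke the corona factorization property (Theorem~3.2(2) of \cite{NgCFP}) in the non-unital case and Theorem~2.4 plus Corollary~3.8 of \cite{HL_unitaryequiv} together with Lemma~\ref{l:grplin} in the unital case to get a unitary $u \in \multialg{\mathfrak{I}_2}$ conjugating one Busby map to the other, and then read off the isomorphism of extensions. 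Your diagnosis of the typo ($\phi_1$ should go $\mathfrak{E}_1 \to \mathfrak{E}_2$) is right, and your flagged ``obstacle'' about fullness is not actually an issue: fullness of $\mathfrak{e}_i$ is a hypothesis, and it is preserved under pushforward/pullback along isomorphisms, so the paper simply asserts it without further comment.
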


\begin{proof}
Since $[ \tau_{ \mathfrak{e}_{1} \cdot \phi_{0} } ] = [ \tau_{ \mathfrak{e}_{1} } ] \times \kk ( \phi_{0} ) = \kk ( \phi_{2} ) \times [ \tau_{ \mathfrak{e}_{2} } ] = [ \tau_{ \phi_{2} \cdot \mathfrak{e}_{2} } ]$ in $\kk^{1} ( \mathfrak{A}_{1} , \mathfrak{I}_{2} )$, we have that $[ \tau_{ \mathfrak{e}_{1} \cdot \phi_{0} } ] = [ \tau_{ \phi_{2} \cdot \mathfrak{e}_{2} } ]$.  

Suppose $\mathfrak{e}_{1}$ and $\mathfrak{e}_{2}$ are non-unital extensions.  Then $\tau_{ \mathfrak{e}_{1} \cdot \phi_{0} }$ and $\tau_{ \phi_{2} \cdot \mathfrak{e}_{2} }$ are non-unital full extensions.  Since $\mathfrak{I}_{2}$ satisfies the corona factorization property, by Theorem 3.2(2) of \cite{NgCFP}, there exists a unitary $u$ in $\mathcal{M}( \mathfrak{I}_{2} )$ such that $\mathrm{Ad} ( \pi ( u ) ) \circ \tau_{ \mathfrak{e}_{1} \cdot \phi_{0} } = \tau_{ \phi_{2} \cdot \mathfrak{e}_{2} }$.  

Suppose $\mathfrak{e}_{1}$ and $\mathfrak{e}_{2}$ are unital extensions with $K_{0} ( \mathfrak{A}_{1} ) \cong \Z \oplus G$ with $[1_{ \mathfrak{A}_{1} } ]$ mapping to $(1, x )$.  Then $\tau_{ \mathfrak{e}_{1} \cdot \phi_{0} }$ and $\tau_{ \phi_{2} \cdot \mathfrak{e}_{2} }$ are unital full extensions.  By Theorem 2.4 and Corollary 3.8 in \cite{HL_unitaryequiv} and Lemma \ref{l:grplin}, there exists a unitary $u$ in $\mathcal{M}( \mathfrak{I}_{2} )$ such that $\mathrm{Ad} ( \pi ( u ) ) \circ \tau_{ \mathfrak{e}_{1} \cdot \phi_{0} } = \tau_{ \phi_{2} \cdot \mathfrak{e}_{2} }$.  

In both cases, there exists $u \in \mathfrak{I}_{2}$ such that $\mathrm{Ad} ( \pi ( u ) ) \circ \tau_{ \mathfrak{e}_{1} \cdot \phi_{0} } = \tau_{ \phi_{2} \cdot \mathfrak{e}_{2} }$.  Hence, the triple $( \mathrm{Ad} ( u ), \mathrm{Ad} (u) , \id_{ \mathfrak{A}_{1} } )$ is an isomorphism between $\mathfrak{e}_{1} \cdot \phi_{0}$ and $\phi_{2} \cdot \mathfrak{e}_{2}$.   Therefore, $\mathfrak{e}_{1}$ is isomorphic to $\mathfrak{e}_{2}$.
\end{proof}

\begin{lemma}\label{l:equivariant}
Let $\mathfrak{A}$ and $\mathfrak{B}$ be tight $C^{*}$-algebras over $X$.
Let $U \in \mathbb{O} ( X )$.
Suppose there exist isomorphisms $\ftn{ \phi_{ 0 } }{ \mathfrak{A} [  U ] }{ \mathfrak{B}[U] }$, $\ftn{ \phi_{1} }{ \mathfrak{A} }{ \mathfrak{B} }$, and $\ftn{ \phi_{ 0 } }{ \mathfrak{A} / \mathfrak{A} [  U ] }{ \mathfrak{B} / \mathfrak{B}[U] }$ such that 
\begin{equation*}
\xymatrix{
0 \ar[r] & \mathfrak{A} [U] \ar[r]^-{ \iota_{1} } \ar[d]^{ \phi_{0} } & \mathfrak{A} \ar[r]^-{ \pi_{1} } \ar[d]^{ \phi_{1} } & \mathfrak{A} / \mathfrak{A} [U ] \ar[r] \ar[d]^{\phi_{2} } & 0 \\
0 \ar[r] & \mathfrak{B} [U] \ar[r]_-{ \iota_{2} }  & \mathfrak{B} \ar[r]_-{ \pi_{2} }  & \mathfrak{B} / \mathfrak{B} [  U ] \ar[r]  & 0
}
\end{equation*}
commutes.
$\varphi_{1}$ is an $X$-equivariant isomorphism if and only if $\varphi_{0}$ is a $U$-equivariant isomorphism and $\varphi_{2}$ is an $X \setminus U$-equivariant isomorphism.
\end{lemma}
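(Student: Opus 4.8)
The plan is to transport the entire statement into the lattice $\mathbb{O}(X)$ of open subsets of $X$, where it becomes an elementary fact about order-automorphisms.

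Since $\mathfrak{A}$ is tight over $X$, the lattice of ideals of $\mathfrak{A}$ is identified with $\mathbb{O}(X)$, with $\mathfrak{A}[V]$ the ideal attached to $V$; likewise for $\mathfrak{B}$. An isomorphism $\phi_{1}\colon\mathfrak{A}\to\mathfrak{B}$ thus induces an order-automorphism $\Phi_{1}$ of $\mathbb{O}(X)$ with $\phi_{1}(\mathfrak{A}[V])=\mathfrak{B}[\Phi_{1}(V)]$, and tightness of $\mathfrak{B}$ gives $\mathfrak{B}[\Phi_{1}(V)]\subseteq\mathfrak{B}[V]\iff\Phi_{1}(V)\subseteq V$; hence $\phi_{1}$ is $X$-equivariant if and only if $\Phi_{1}(V)\subseteq V$ for all $V\in\mathbb{O}(X)$. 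Similarly, using that $\mathfrak{A}[U],\mathfrak{B}[U]$ are tight over $U$, that $\mathbb{O}(U)=\{V\in\mathbb{O}(X):V\subseteq U\}$, and the identity $\mathfrak{A}[U][V]=\mathfrak{A}[V]$ from \cite{rmrn:bootstrap}, $\phi_{0}$ induces an order-automorphism $\Phi_{0}$ of $\mathbb{O}(U)$, and $\phi_{0}$ is $U$-equivariant iff $\Phi_{0}(V)\subseteq V$ for all $V\subseteq U$; and using $(\mathfrak{A}/\mathfrak{A}[U])[V\setminus U]=\mathfrak{A}[V]/\mathfrak{A}[U]$ (again \cite{rmrn:bootstrap}), $\phi_{2}$ induces an order-automorphism $\Phi_{2}$ of $\mathbb{O}(X\setminus U)$, with $\phi_{2}$ being $(X\setminus U)$-equivariant iff $\Phi_{2}(W)\subseteq W$ for all $W\in\mathbb{O}(X\setminus U)$.

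Next I would extract from the commuting diagram how $\Phi_{0},\Phi_{2}$ sit inside $\Phi_{1}$. Commutativity of the left square together with surjectivity of $\phi_{0}$ gives $\phi_{1}(\mathfrak{A}[U])=\mathfrak{B}[U]$, i.e.\ $\Phi_{1}(U)=U$; consequently $\Phi_{1}$ maps $\mathbb{O}(U)$ onto itself, and comparing $\phi_{1}(\mathfrak{A}[V])$ with $\phi_{0}(\mathfrak{A}[V])$ for $V\subseteq U$ shows this restriction is $\Phi_{0}$. Commutativity of the right square, rewritten with $\pi_{1}(\mathfrak{A}[V])=\mathfrak{A}[V\cup U]/\mathfrak{A}[U]$ and the analogue for $\mathfrak{B}$, gives $\Phi_{2}(V\setminus U)=\Phi_{1}(V)\setminus U$ for every $V\in\mathbb{O}(X)$; in particular $\phi_{2}$ is $(X\setminus U)$-equivariant iff $\Phi_{1}(V)\subseteq V\cup U$ for all $V$. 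So the lemma reduces to: an order-automorphism $\Phi$ of $\mathbb{O}(X)$ with $\Phi(U)=U$ has $\Phi(V)\subseteq V$ for all $V$ if and only if $\Phi(V)\subseteq V$ for all $V\subseteq U$ and $\Phi(V)\subseteq V\cup U$ for all $V$.

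One direction is immediate. For the other, fix $V\in\mathbb{O}(X)$; then, using $\Phi(V)\subseteq V\cup U$, the set-theoretic distributivity $\Phi(V)\cap(V\cup U)=(\Phi(V)\cap V)\cup(\Phi(V)\cap U)$, and that $\Phi$ preserves binary meets (so $\Phi(V)\cap U=\Phi(V)\cap\Phi(U)=\Phi(V\cap U)$), we obtain
\[
\Phi(V)=\Phi(V)\cap(V\cup U)=(\Phi(V)\cap V)\cup\Phi(V\cap U)\subseteq V\cup(V\cap U)=V,
\]
since $V\cap U\subseteq U$ and so $\Phi(V\cap U)\subseteq V\cap U$ by hypothesis. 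Translating back yields the equivalence; applying the same reasoning to $\phi_{1}^{-1},\phi_{0}^{-1},\phi_{2}^{-1}$ (which fit the transposed diagram) shows the inverses are equivariant as well, so these are isomorphisms of $C^{*}$-algebras over the respective spaces. I expect the only nontrivial labor to be the bookkeeping in the third paragraph — in particular verifying $\Phi_{2}(V\setminus U)=\Phi_{1}(V)\setminus U$ and marshalling the sub-quotient identities of \cite{rmrn:bootstrap} — rather than the displayed computation.
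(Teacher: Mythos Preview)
Your proof is correct and follows essentially the same route as the paper's: both arguments split $\Phi_{1}(V)$ into its part inside $U$ (controlled by $\phi_{0}$) and its part outside $U$ (controlled by $\phi_{2}$), then recombine. The paper does this concretely by writing $\phi_{1}(\mathfrak{A}[V])=\mathfrak{B}[V_{1}]$ and checking $V_{1}\cap U\subseteq V\cap U$ and $V_{1}\setminus U\subseteq V\setminus U$ separately, while you abstract the same decomposition into the single displayed identity $\Phi(V)=(\Phi(V)\cap V)\cup\Phi(V\cap U)$ via distributivity and $\Phi(U)=U$; the content is the same.
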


\begin{proof}
Suppose $\varphi_{1}$ is an $X$-equivariant.  Let $V$ be an open subset of $U$.  Then $V \in \mathbb{O}(X)$ since $U \in \mathbb{O}(X)$.  Hence, $\varphi_{0} ( \mathfrak{A}[V] ) = \varphi_{1} ( \mathfrak{A} [V] ) = \mathfrak{B} [V ]$.  Hence, $\varphi_{0}$ is a $U$-equivariant isomorphism.  

Suppose $Y \in \mathbb{O} ( X \setminus U )$.  Then $Y = V \setminus U$ where $V \in \mathbb{O} ( X )$ and $U \subseteq V$.  Thus, 
\[
	\varphi_{2} ( (\mathfrak{A} / \mathfrak{A}[U]) [ Y] ) = \varphi_{2} ( \mathfrak{A}[V] / \mathfrak{A} [U] ) = \pi_{2} ( \mathfrak{B}[V] ) = \mathfrak{B} [V] / \mathfrak{B} [U] = (\mathfrak{B} / \mathfrak{B} [U]) [ Y ].
\]
Hence, $\phi_{2}$ is an $X\setminus U$-equivariant isomorphism.

Suppose $\varphi_{0}$ is a $U$-equivariant isomorphism and $\varphi_{2}$ is an $X \setminus U$-equivariant isomorphism.  Let $V \in \mathbb{O} (X)$ such that $V$ is not a subset of $U$.  Then $\varphi_{2} ( \mathfrak{A} [V \cup U ] / \mathfrak{A} [U] ) = \mathfrak{B} [ V \cup U ] / \mathfrak{B} [U]$.  Let $V_{1} \in \mathbb{O} ( X )$ such that $\varphi_{1} ( \mathfrak{A} [U] ) = \mathfrak{B} [V_{1} ]$.  Then
\[
	\mathfrak{B} [ (V_{1} \cup U) \setminus U ] = \pi_{2} ( \mathfrak{B}[V_{1}] ) = \mathfrak{B} [ ( V \cup U ) \setminus U ].
\]
   Hence, 
\[
V_{1} \setminus U  = ( V_{1} \cup U ) \setminus U 
				=  ( V \cup U ) \setminus U 
				= V \setminus U   
\]
Also, note that 
\[
\mathfrak{B} [ V_{1} \cap U ] = \mathfrak{B} [V_{1} ] \cap \mathfrak{B} [ U ] 
					= \phi_{1} ( \mathfrak{A} [ V ] \cap \mathfrak{A}[ U ] ) 
					= \phi_{1} ( \mathfrak{A} [ V \cap U ] ) 
					= \mathfrak{B} [ V \cap U ] 
\]
So, $V_{1} \cap U = V \cap U$.  Therefore, $V_{1} = V$.  

Suppose $V \in \mathbb{O} ( X )$ such that $V \subseteq U$.  Then $V \in \mathbb{O} ( U )$.  Hence, $\varphi_{1} ( \mathfrak{A} [V] ) = \varphi_{0} ( \mathfrak{A} [V] ) = \mathfrak{B} [V]$.  Hence, $\varphi_{1}$ is an $X$-equivariant isomorphism.
\end{proof}

\begin{lemma}\label{l:unit}
Let $m \in \N \cup \{ \infty \}$. 
Let $x \in \bigoplus_{ i  = 1 }^{m} \Z$ be given.
If $x_1 = 1$ then there exists an isomorphism $\ftn{ \alpha }{ \bigoplus_{ i = 1}^{m} \Z }{ \bigoplus_{ i=1 }^{m} \Z }$ such that $\alpha ( (1, 0 , \dots ) ) = x$.
\end{lemma}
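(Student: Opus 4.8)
The plan is to write down the required automorphism explicitly on the standard basis and exhibit a two-sided inverse by hand. Let $\{e_i\}_{i=1}^{m}$ denote the standard basis of $\bigoplus_{i=1}^{m}\Z$, so that $(1,0,\dots) = e_1$, and put $y = x - e_1$. Since $x_1 = 1$, the element $y$ lies in the subgroup generated by $\{e_i : 2 \le i \le m\}$; moreover $y$ has finite support — automatic when $m < \infty$, and true by the definition of the direct sum when $m = \infty$ — so $y$ is a finite $\Z$-linear combination of the basis vectors $e_i$ with $i \ge 2$.

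First I would define $\alpha$ by $\alpha(e_1) = e_1 + y = x$ and $\alpha(e_i) = e_i$ for $2 \le i \le m$, extended $\Z$-linearly; since each generator is sent to a finitely supported element, this is a well-defined group endomorphism of $\bigoplus_{i=1}^{m}\Z$. Symmetrically, define $\beta$ by $\beta(e_1) = e_1 - y$ and $\beta(e_i) = e_i$ for $2 \le i \le m$. Because $y$ is a $\Z$-combination of the $e_i$ with $i \ge 2$, both $\alpha$ and $\beta$ fix $y$; hence $\beta\alpha(e_1) = \beta(e_1 + y) = (e_1 - y) + y = e_1$ and $\alpha\beta(e_1) = \alpha(e_1 - y) = (e_1 + y) - y = e_1$, while both composites obviously fix each remaining basis vector. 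Thus $\beta = \alpha^{-1}$, so $\alpha$ is an isomorphism with $\alpha((1,0,\dots)) = \alpha(e_1) = x$, as required.

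Conceptually this is just the observation that the coordinate projection $\pi_1 \colon \bigoplus_{i=1}^{m}\Z \to \Z$ splits the inclusion of $\Z x$, so $\bigoplus_{i=1}^{m}\Z = \Z x \oplus \ker \pi_1$ with $\ker \pi_1$ free on $\{e_i : i \ge 2\}$, and the displayed $\alpha$ is the corresponding change of basis. There is essentially no real obstacle here: the only point deserving a word is the case $m = \infty$, where one must confirm that $\alpha$ and $\beta$ genuinely take values in the direct sum, and this is immediate from the fact that $x$, and hence $y$, has finite support.
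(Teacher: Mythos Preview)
Your argument is correct. The map $\alpha$ you write down is, in matrix terms, a single lower-triangular matrix with $1$'s on the diagonal and the vector $(x_2,x_3,\dots)$ in the first column below the diagonal; your $\beta$ is its obvious inverse. This is cleaner than the paper's route, which instead passes through the vector $(1,1,\dots,1)$: it builds a lower-triangular matrix $\gamma$ with $\gamma(1,1,\dots,1)=x$ and another lower-triangular matrix $\delta$ with $\delta(1,1,\dots,1)=(1,0,\dots,0)$, and then sets $\alpha=\gamma\circ\delta^{-1}$; the infinite case is then deduced separately by truncating to the support of $x$ and applying the finite case. Your construction avoids this intermediate step and this case split, handling finite and infinite $m$ uniformly once you observe that $y=x-e_1$ has finite support. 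The only (minor) cost is that you appeal to an explicit inverse rather than a determinant computation, but since you exhibit $\beta$ by hand there is no gap.
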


\begin{proof}
Suppose $m \in \N$.
Define $\gamma = (\gamma_{ij}) \in \mathsf{M}_{m} ( \Z )$ as follows:
\begin{align*}
\gamma_{i,j} = 
\begin{cases}
	1,	& \text{if $i = j$, $1 \leq i \leq m$} \\
	x_i-1,	& \text{if $j = i-1$, $2 \leq i \leq m$} \\
	0,	& \text{otherwise}
\end{cases}
\end{align*}
Since $\gamma$ is a lower triangular matrix with $1$'s in the diagonal, we have that $\det( \gamma ) = 1$.
Hence, $\gamma$ is invertible in $\mathsf{M}_{m} ( \Z )$.
Therefore, $\ftn{ \gamma }{ \bigoplus_{ i = 1}^{m} \Z }{ \bigoplus_{ i =1 }^{m} \Z }$ is an isomorphism.
Note that $\gamma ( (1, 1, \dots, 1) )_{i} = x_{i}$ for each $i$.
Define $\delta = (\delta_{ij}) \in \mathsf{M}_{m} ( \Z )$ as follows:
\begin{align*}
\delta_{i,j} = 
\begin{cases}
	1,	& \text{if $i = j$, $1 \leq i \leq m$} \\
	-1,	& \text{if $j = i-1$, $2 \leq i \leq m$} \\
	0,	& \text{otherwise}
\end{cases}
\end{align*}
Since $\delta$ is a lower triangular matrix with $1$'s in the diagonal, we have that $\det( \delta ) = 1$.
Whence, $\delta$ is invertible in $\mathsf{M}_{m} ( \Z )$.
So $\ftn{ \delta }{ \bigoplus_{ i = 1}^{m} \Z }{ \bigoplus_{ i =1 }^{m} \Z }$ is an isomorphism with $\delta ( (1, 1, \ldots, 1 ) ) = ( 1, 0, \ldots,0)$.

Set $\alpha =  \gamma \circ \delta^{-1}$.
Then $\ftn{ \alpha }{ \bigoplus_{ i = 1}^{m} \Z }{ \bigoplus_{ i=1 }^{m} \Z }$ is an isomorphism such that 
\[
	\alpha((1,0,\ldots,0)) = \gamma(\delta^{-1}((1,0,\ldots,0))) = \gamma((1,1,\ldots, 1)) = x.
\]

Suppose $m = \infty$.
Then $x = ( x_{1}, x_{2}, \dots, x_{n} , 0 , 0 \dots )$.  
From the finite case, there exists an isomorphism $\ftn{ \alpha_{n} }{ \bigoplus_{i = 1 }^{n} \Z }{  \bigoplus_{i = 1 }^{n} \Z }$ such that $\alpha_{n} ( ( 1, 0, \dots, 0 ) ) = ( x_{1}, x_{2}, \dots, x_{n} )$.
Define $\ftn{ \beta }{  \bigoplus_{i = 1 }^{\infty} \Z }{ \bigoplus_{i = 1 }^{\infty} \Z }$ by 
\begin{align*}
\beta ( (y_{1}, y_{2}, \dots ) ) = ( \alpha_{n}^{-1} ( ( y_{1}, \dots, y_{n} ) ) , y_{n+1} , y_{n+2}, \dots )
\end{align*}
Since $\alpha_{n}$ is an isomorphism, we have that $\beta$ is an isomorphism.
Moreover,
\begin{align*}
\beta ( x ) = ( \alpha_{n}^{-1}( ( x_{1}, \dots, x_{n} ) ), 0, 0, \dots ) = ( 1,0, 0 \dots )
\end{align*}
Hence, $\ftn{ \alpha = \beta^{-1} }{  \bigoplus_{i = 1 }^{\infty} \Z }{ \bigoplus_{i = 1}^{\infty} \Z }$ is an isomorphism such that $\alpha ( (1, 0, \ldots ) ) = x$.
\end{proof}

\begin{lemma}\label{l:orderunit}
Suppose $\mathfrak{A}$ and $\mathfrak{B}$ are $C^{*}$-algebras in $\mathcal{C}_{ \mathrm{free} }$.  
Then $\primTS ( \mathfrak{A} ) \cong \primTS ( \mathfrak{B} )$ implies that $\mathfrak{A}[ x ]  \cong \mathfrak{B} [ \alpha (x) ] $ for all $x \in \mathrm{Prim} ( \mathfrak{A} )$, where $\ftn{ \alpha }{ \mathrm{Prim} ( \mathfrak{A} ) }{ \mathrm{Prim} ( \mathfrak{B} ) }$ is the homeomorphism implementing the isomorphism $\primTS ( \mathfrak{A} ) \cong \primTS ( \mathfrak{B} )$.
\end{lemma}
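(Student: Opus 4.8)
The plan is to fix $x \in \mathrm{Prim}(\mathfrak{A})$, set $y = \alpha(x)$, and extract enough from the invariant to pin down $\mathfrak{A}[x]$ and $\mathfrak{B}[y]$ up to isomorphism via the Kirchberg--Phillips classification theorem. First I would record what $\primTS(\mathfrak{A}) \cong \primTS(\mathfrak{B})$ gives at $x$: since $\mathrm{Prim}(\mathfrak{A})$ is finite, $\{x\}$ is locally closed with $\widetilde{\alpha}(\{x\}) = \{y\}$, so $\tau_{\mathfrak{A}}(x) = \tau_{\mathfrak{B}}(y)$ and $\sigma_{\mathfrak{A}}(\{x\}) = \sigma_{\mathfrak{B}}(\{y\})$; that is, the two simple sub-quotients have the same $K_{0}$-rank with the same sign convention, and one is unital if and only if the other is. By Proposition \ref{p:elements}(1), $\mathfrak{A}[x]$ and $\mathfrak{B}[y]$ themselves lie in $\mathcal{C}_{\mathrm{free}}$, so Definition \ref{d:freeclass} and Remark \ref{r:elements} sharply restrict the possibilities.

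Next I would split according to the sign of the common value $t = \tau_{\mathfrak{A}}(x) = \tau_{\mathfrak{B}}(y)$. If $t < 0$, then $K_{0}$ of the sub-quotient does not equal its positive cone, which is impossible for a purely infinite simple $C^{*}$-algebra; hence by Definition \ref{d:freeclass}(2) both $\mathfrak{A}[x]$ and $\mathfrak{B}[y]$ are stably isomorphic to $\K$. If in addition $\sigma = 1$, Remark \ref{r:elements} forces both to equal $\C$; if $\sigma = 0$, the stability clause of Definition \ref{d:freeclass}(2) forces both to equal $\K$. Either way $\mathfrak{A}[x] \cong \mathfrak{B}[y]$. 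If instead $t \geq 0$, then (an algebra stably isomorphic to $\K$ has $\tau = -1$) both sub-quotients lie in $\mathcal{C}$, so both are separable, nuclear, simple, purely infinite, UCT $C^{*}$-algebras with vanishing $K_{1}$ and free $K_{0}$ of the same rank $n = |t| \in \{0,1,2,\dots\} \cup \{\infty\}$; in particular $K_{0}(\mathfrak{A}[x]) \cong \bigoplus_{i=1}^{n}\Z \cong K_{0}(\mathfrak{B}[y])$ as abelian groups.

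In this last case I would finish using Kirchberg--Phillips. When $\sigma = 0$ both algebras are stable, and classification by $(K_{0},K_{1}) = (\bigoplus_{i=1}^{n}\Z, 0)$ immediately yields $\mathfrak{A}[x] \cong \mathfrak{B}[y]$. When $\sigma = 1$ both are unital and one must match the class of the unit: condition (3) of Definition \ref{d:freeclass} supplies isomorphisms $K_{0}(\mathfrak{A}[x]) \cong \bigoplus_{i=1}^{n}\Z$ and $K_{0}(\mathfrak{B}[y]) \cong \bigoplus_{i=1}^{n}\Z$ each carrying $[1]$ to an element with first coordinate $1$ (here $n \geq 1$, since otherwise $[1]$ would be a nonzero element of the zero group). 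Lemma \ref{l:unit} then produces automorphisms of $\bigoplus_{i=1}^{n}\Z$ sending $(1,0,\dots)$ to each of these two elements; composing them yields an isomorphism $K_{0}(\mathfrak{A}[x]) \to K_{0}(\mathfrak{B}[y])$ preserving the class of the unit, and the unital Kirchberg--Phillips theorem gives $\mathfrak{A}[x] \cong \mathfrak{B}[y]$.

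The only genuinely delicate point is the unital purely infinite case, where an abstract $K_{0}$-isomorphism is insufficient and one must arrange that it respects $[1]$; this is precisely what condition (3) of Definition \ref{d:freeclass} and Lemma \ref{l:unit} are engineered to handle, the one subtlety being the observation that a unital algebra in $\mathcal{C}_{\mathrm{free}}$ necessarily has $n \geq 1$, so that $\mathcal{O}_{2}$ cannot occur. Everything else is bookkeeping with the invariant.
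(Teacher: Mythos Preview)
Your proposal is correct and follows essentially the same approach as the paper's proof: both split first on whether the simple sub-quotient is AF-type (stably $\K$) or purely infinite via the sign of $\tau$, then on stable versus unital via $\sigma$, and in the unital purely infinite case invoke condition (3) of Definition~\ref{d:freeclass} together with Lemma~\ref{l:unit} to produce a $K_0$-isomorphism matching the unit classes before applying Kirchberg--Phillips. Your treatment is in fact slightly more explicit than the paper's in noting that $n\geq 1$ in the unital case (so that Lemma~\ref{l:unit} applies), but otherwise the two arguments coincide.
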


\begin{proof}
Let $x \in \mathrm{Prim} ( \mathfrak{A} )$. 
Since $\tau_{ \mathfrak{A} } = \tau_{ \mathfrak{B} } \circ \alpha$, we have that $K_{0} ( \mathfrak{A} [ x ]  ) \cong K_{0} ( \mathfrak{B}[x] )$ and that $\mathfrak{A} [ x ] $ and $\mathfrak{B} [ \alpha (x) ]$ are either both AF algebras or both purely infinite.
Since $\sigma_{ \mathfrak{A} } = \sigma_{ \mathfrak{B} } \circ \widetilde{\alpha}$, we have that $\mathfrak{A} [ x ] $ and $\mathfrak{B} [ \alpha (x) ] $ are either both stable or both unital.  

Suppose $\mathfrak{A} [ x ] $ and $\mathfrak{B} [ \alpha(x) ]$ are AF algebra.  
Then $\mathfrak{A}[x] \cong \K \cong \mathfrak{B}[ \alpha(x)]$ or $\mathfrak{A} [x]\cong \C \cong \mathfrak{B} [ \alpha(x)]$.  

Suppose $\mathfrak{A} [ x ] $ and $\mathfrak{B}[ \alpha(x) ]$ are purely infinite simple.
Since $K_{0} ( \mathfrak{A} [ x ]  ) \cong K_{0} ( \mathfrak{B}[ \alpha(x) ] )$, by \cite{kirchpure} we have that $\mathfrak{A} [ x ]  \otimes \K \cong \mathfrak{B} [ \alpha(x) ] \otimes \K$. 
Thus, if $\mathfrak{A} [ x ] $ and $\mathfrak{B} [ \alpha(x) ]$ are stable, $\mathfrak{A}[x] \cong \mathfrak{B}[ \alpha(x) ]$.  

We now consider the case where $\mathfrak{A} [ x ]$ and $\mathfrak{B} [ \alpha(x) ]$ are unital.
Then $K_{0} ( \mathfrak{A}[x] ) \cong \oplus_{k = 1}^{n} \Z$ such that the isomorphism sends $[ 1_{ \mathfrak{A}[x] } ]$ to $(1, y_{ \mathfrak{A} } )$ with $y_{ \mathfrak{A}} \in \oplus_{ k = 2}^{n} \Z$ and $K_{0} ( \mathfrak{B}[ \alpha(x)] ) \cong \oplus_{k = 1}^{n} \Z$ such that the isomorphism sends $[ 1_{ \mathfrak{B}[ \alpha(x) ] } ]$ to $(1, y_{ \mathfrak{B} } )$ with $y_{ \mathfrak{B} } \in \oplus_{ k = 2}^{n} \Z$.
From Lemma \ref{l:unit} we get automorphisms $\alpha$ and $\beta$  on $\oplus_{k = 1}^{n} \Z$ such that 
\[
	\alpha(1,0, \ldots, 0) = (1,y_{\mathfrak{A}}), \quad \text{ and } \quad \beta(1,0, \ldots, 0) = (1,y_{\mathfrak{B}}).
\]
Let $\gamma = \beta \circ \alpha^{-1}$. 
Then $\gamma$ is an automorphism of $\oplus_{k = 1}^{n} \Z$ which takes $(1,y_\mathfrak{A})$ to $(1,y_\mathfrak{B})$. Thus, by \cite{kirchpure}, $\mathfrak{A} [ x ] \cong \mathfrak{B} [ \alpha(x) ]$.
\end{proof}

\begin{theor}\label{t:classification}
Let $X$ be a finite topological space and let $\mathfrak{A}_1$ and $\mathfrak{A}_2$ be tight $C^{*}$-algebras over $X$. 
Suppose $\mathfrak{A}_1$ and $\mathfrak{A}_1$ are in $\mathcal{C}_{ \mathrm{free} }$.
There exists an $X$-equivariant isomorphism $\ftn{ \varphi }{ \mathfrak{A}_1 }{ \mathfrak{A}_2 }$ if and only if $\ouri_{X, \Sigma} ( \mathfrak{A}_1 ) \equiv \ouri_{X, \Sigma} ( \mathfrak{A}_2 )$. 
Consequently, there exists an $X$-equivariant isomorphism $\ftn{ \varphi }{ \mathfrak{A}_1 \otimes \K }{ \mathfrak{A}_2 \otimes \K }$ if and only if $\ouri_{X} ( \mathfrak{A}_1 ) \equiv \ouri_{X} ( \mathfrak{A}_2 )$.
\end{theor}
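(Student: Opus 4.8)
The plan is to establish necessity by inspection and sufficiency by induction on $|X|$, using Lemma~\ref{lem:class} to glue an extension together and Lemma~\ref{l:equivariant} to promote the glued map to an $X$-equivariant one; the statement about stabilisations will then be a formal consequence. For necessity, an $X$-equivariant isomorphism $\varphi\colon\mathfrak{A}_1\to\mathfrak{A}_2$ restricts to isomorphisms $\varphi_Y\colon\mathfrak{A}_1[Y]\to\mathfrak{A}_2[Y]$ for every $Y\in\mathbb{LC}(X)$, and these preserve rank and the sign of the order on $K_0$ (so $\tau_{\mathfrak{A}_1}=\tau_{\mathfrak{A}_2}$) as well as unitality (so $\sigma_{\mathfrak{A}_1}=\sigma_{\mathfrak{A}_2}$); hence $\ouri_{X,\Sigma}(\mathfrak{A}_1)\equiv\ouri_{X,\Sigma}(\mathfrak{A}_2)$. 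Since $(\mathfrak{A}_i\otimes\K)[Y]\cong\mathfrak{A}_i[Y]\otimes\K$ is always stable, the stable half of the ``only if'' direction follows immediately.

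For sufficiency I would argue by induction on $|X|$. If $|X|=1$ then $\mathfrak{A}_1$ and $\mathfrak{A}_2$ are simple members of $\mathcal{C}_{\mathrm{free}}$, and Lemma~\ref{l:orderunit} gives $\mathfrak{A}_1\cong\mathfrak{A}_2$. If $X$ is disconnected I would write it as the disjoint union of its (clopen) components: each $\mathfrak{A}_i$ then decomposes as the corresponding direct sum, the invariant decomposes accordingly, and the induction hypothesis applies componentwise, so I may assume $X$ is connected with $|X|\geq 2$. Let $U\subseteq X$ be the open set of maximal points of $X$. For a maximal point $x$ the singleton $\{x\}$ is open, so if $\mathfrak{A}_i[x]$ were unital its unit would be a central projection, $\mathfrak{A}_i[x]$ would be a complemented ideal of $\mathfrak{A}_i$, and $\{x\}$ would be clopen, contradicting connectedness; therefore every such $\mathfrak{A}_i[x]$ is stable and, by Condition~(2) of Definition~\ref{d:freeclass}, is isomorphic to $\K$ or to a stable Kirchberg algebra. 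Hence $\mathfrak{A}_i[U]$ is a finite direct sum of such algebras: it is stable and has the corona factorisation property.

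Consider the extension
\[
\mathfrak{e}_i\colon\qquad 0 \longrightarrow \mathfrak{A}_i[U] \longrightarrow \mathfrak{A}_i \longrightarrow \mathfrak{A}_i[X\setminus U] \longrightarrow 0.
\]
By Proposition~\ref{p:elements}(2), both $\mathfrak{A}_i[U]$ (over $U$) and $\mathfrak{A}_i[X\setminus U]$ (over $X\setminus U$) lie in $\mathcal{C}_{\mathrm{free}}$; since $\tau$ and $\sigma$ restrict, the induction hypothesis produces a $U$-equivariant isomorphism $\phi_0\colon\mathfrak{A}_1[U]\to\mathfrak{A}_2[U]$ and an $(X\setminus U)$-equivariant isomorphism $\phi_2\colon\mathfrak{A}_1[X\setminus U]\to\mathfrak{A}_2[X\setminus U]$. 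To glue $\mathfrak{e}_1$ and $\mathfrak{e}_2$ with Lemma~\ref{lem:class} I would check its hypotheses. The compatibility $\kk(\phi_2)\times[\tau_{\mathfrak{e}_2}]=[\tau_{\mathfrak{e}_1}]\times\kk(\phi_0)$ is automatic: any $C^*$-algebra obtained by iterated extensions from members of $\mathcal{C}$ and from copies of $\K$ and $\C$ lies in the UCT class, has $K_1=0$, and has free $K_0$ (each property passes through the six-term exact sequence, the $K_0$-part splitting because the quotient's $K_0$ is free), so the universal coefficient theorem forces $\kk^{1}(\mathfrak{A}_1[X\setminus U],\mathfrak{A}_2[U])=0$ and both sides of the compatibility vanish. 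Whether $\mathfrak{e}_1$ and $\mathfrak{e}_2$ are unital is detected by $\sigma_{\mathfrak{A}_i}(X)$ and agrees for $i=1,2$; in the unital case the normalisation $K_0(\mathfrak{A}_i[X\setminus U])\cong\Z\oplus G$ in which the class of the unit corresponds to $(1,\lambda)$, needed for part~(b) of Lemma~\ref{lem:class}, is supplied by Condition~(3) of Definition~\ref{d:freeclass} together with Lemmas~\ref{l:unit} and~\ref{l:grplin}. Once the extensions are known to be full, Lemma~\ref{lem:class} yields an isomorphism $\varphi\colon\mathfrak{A}_1\to\mathfrak{A}_2$ compatible with $\phi_0$ on the ideals and $\phi_2$ on the quotients, which Lemma~\ref{l:equivariant} shows is $X$-equivariant. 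Finally, applying the equivalence just proved to $\mathfrak{A}_1\otimes\K$ and $\mathfrak{A}_2\otimes\K$ — which lie in $\mathcal{C}_{\mathrm{free}}$ by Proposition~\ref{p:elements}(3), have $\sigma\equiv 0$, and carry the same $\tau$ as $\mathfrak{A}_1$ and $\mathfrak{A}_2$ — gives the consequence about stabilisations.

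The step I expect to be the main obstacle is arranging, at each stage of the induction, that the ideal split off is genuinely stable (so that Lemma~\ref{lem:class} applies) \emph{and} the ambient extension is full. When $X$ has several maximal points one cannot simply extract all of them at once and keep fullness, so the induction has to be organised so that any ``trivial'' summand — an ideal supported on a clopen subset — is removed separately before a stable, full layer can be isolated. It is precisely this combinatorial interplay between the unital and the stable simple subquotients, governed by Conditions~(2) and~(3) of Definition~\ref{d:freeclass} and recorded by $\sigma$, that carries the weight of the argument; the $K$-theoretic input is comparatively soft, since all the relevant $\kk^{1}$-groups vanish.
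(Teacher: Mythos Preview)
Your outline matches the paper exactly through the base case, the reduction to connected $X$, and the case where $X$ has a single maximal point (the paper's $|U|=1$). The gap is precisely where you flag it, and it is a genuine one that your last paragraph does not close. Once $X$ is connected there are no clopen proper subsets to strip off, yet $X$ can still have several maximal points; in that situation neither choice works: taking $U$ to be all maximal points gives an essential but \emph{not full} extension (the Busby map of an element can land in only some summands of $\mathcal{Q}(\mathfrak{A}_i[U])\cong\bigoplus_k\mathcal{Q}(\mathfrak{I}^k_i)$), while taking a single maximal point gives a non-essential ideal. So Lemma~\ref{lem:class} is simply not available for the extension $\mathfrak{e}_i$ in the $|U|\geq 2$ case.

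The paper does not try to manufacture one full extension. Instead it works coordinate-wise over the minimal ideals. For each $k$ it passes to the quotient extension $\mathfrak{e}_{j,k}\colon 0\to\mathfrak{I}^k_j\to\mathfrak{A}_j/\hat{\mathfrak{I}}^k_j\to\mathfrak{A}_j/\mathfrak{I}_j\to 0$, which is a tight $C^*$-algebra over $Y_k=(X\setminus U)\cup\{a_k\}$ with $|Y_k|\leq m-1$, and uses the induction hypothesis on $Y_k$ to show that $\ker\tau_{\mathfrak{e}_{1,k}}$ and $\ker\tau_{\mathfrak{e}_{2,k}}$ correspond to the \emph{same} open set $U_k\supseteq U$. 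Dividing out this kernel yields injective Busby maps $\overline{\tau}_{\mathfrak{e}_{j,k}}$ into $\mathcal{Q}(\mathfrak{I}^k_j)$; these are essential, and since the ideal is now simple ($\K$ or stable Kirchberg) they are full. Only at this level does one apply the unitary-equivalence results (via \cite{HL_unitaryequiv} and Lemma~\ref{l:grplin}) to obtain unitaries $u_k\in\mathcal{M}(\mathfrak{I}^k_2)$. The adjusted isomorphisms $\mathrm{Ad}(u_k)\circ\phi_k$ on the summands are then assembled into a single $U$-equivariant $\theta\colon\mathfrak{I}_1\to\mathfrak{I}_2$ satisfying $\overline{\theta}\circ\tau_{\mathfrak{e}_1}=\tau_{\mathfrak{e}_2}\circ\beta$ on the nose, and the isomorphism $\mathfrak{A}_1\cong\mathfrak{A}_2$ comes from \cite[Theorem~2.2]{ELPmorph}, not from a second application of Lemma~\ref{lem:class}. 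The missing idea in your proposal is this factoring-through-the-kernel step, together with the use of the induction hypothesis on the intermediate spaces $Y_k$ to guarantee that the kernels match up for $j=1,2$.
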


\begin{proof}
It is clear that if there exists an $X$-equivariant isomorphism $\ftn{ \varphi }{ \mathfrak{A}_1  }{ \mathfrak{A}_2  }$, then $\ouri_{X, \Sigma} ( \mathfrak{A}_1 ) \equiv \ouri_{X, \Sigma} ( \mathfrak{A}_2 )$.

For the converse, we will induct on $X$.  Suppose $X$ has one point.  Then $\mathfrak{A}_1$ and $\mathfrak{A}_2$ are simple $C^{*}$-algebras.  Suppose $\ouri_{X, \Sigma } ( \mathfrak{A}_1 ) \equiv \ouri_{X, \Sigma} ( \mathfrak{A}_2 )$.  By Lemma \ref{l:orderunit}, $\mathfrak{A}_1 \cong \mathfrak{A}_2$.  

Suppose the theorem is true for any finite topological space with less than or equal to $m-1$ elements, and that $X$ is a finite topological space with $m$ elements.  Suppose $\alpha : \ouri_{X, \Sigma} ( \mathfrak{A}_1 ) \equiv \ouri_{X, \Sigma } ( \mathfrak{A}_2 )$.  Note that if $X$ is disconnected then $\mathfrak{A}_1$ and $\mathfrak{A}_2$ are isomorphic to the direct sum of $C^{*}$-algebras with primitive ideal space less than $m$.  Hence, the result follows from our inductive hypothesis.  So, we may assume that $X$ is connected.  

We claim that for every $V \in \mathbb{O} ( X )$ with $V \neq \emptyset$, there exists an $(X \setminus V)$-equivariant isomorphism from $\mathfrak{A}_1[X \setminus V ]$ to $\mathfrak{A}_2 [ X \setminus V ]$.  Let $V \in \mathbb{O} (X)$ with $V \neq \emptyset$.  Set $Y = X \setminus V \in \mathbb{LC}(X)$.  Then $Z \in \mathbb{LC} ( Y)$ if and only if $Z \in \mathbb{LC} (X)$ and $Z \subseteq Y$.  Hence, $\alpha$ induces $\alpha^{Y} : \ouri_{Y, \Sigma} ( \mathfrak{A}_1[Y] ) \equiv \ouri_{Y, \Sigma}( \mathfrak{A}_2 [Y] )$.  Since $| Y | \leq m-1$, there exists a $Y$-equivariant isomorphism from $\mathfrak{A}_1[Y]$ to $\mathfrak{A}_2[Y]$.  We have just proved the claim.

Let $\mathfrak{I}^{1}_j, \mathfrak{I}^{2}_j, \dots, \mathfrak{I}^{n}_j$ be the minimal ideals of $\mathfrak{A}_j$.  Let $a_{i} \in X$ such that $\mathfrak{I}^{i}_j = \mathfrak{A}_j [ a_{i} ]$.  Set $\mathfrak{I}_j = \sum_{ i = 1}^{n} \mathfrak{I}^{i}_j$. Note that $U = \{ a_{1} , \dots, a_{n} \} \in \mathbb{O} ( X )$ such that $\mathfrak{I}_j = \mathfrak{A}_j [U]$ .  Since $X$ is connected, $\mathfrak{A}_1 [ a_{i} ]$ and $\mathfrak{A}_2 [ a_{i} ]$ are stable $C^{*}$-algebras and hence, $\mathfrak{A}_1 [ U ]$ and $\mathfrak{A}_2 [U]$ are stable $C^{*}$-algebras.  

Let for $j\in \{1,2\}$
\begin{align*}
\mathfrak{e}_{j} \ : \ 0 \to \mathfrak{I}_j  \to \mathfrak{A}_j \to \mathfrak{A}_j / \mathfrak{I}_j  \to 0 
\end{align*}  
Suppose $| U | = 1$.  Then $\mathfrak{e}_{1}$ and $\mathfrak{e}_{2}$ are full extensions.  Since $\mathfrak{A}_j / \mathfrak{I}_j$  are tight $C^{*}$-algebras over $X \setminus U$ and $| X \setminus U | = m - 1$, by the above claim, there exists an $X \setminus U$-equivariant isomorphism $\ftn{ \beta }{ \mathfrak{A}_1 / \mathfrak{I}_1 }{ \mathfrak{A}_2 / \mathfrak{I}_2 }$.  Also, there exists an isomorphism $\ftn{ \theta }{ \mathfrak{I}_1 }{ \mathfrak{I}_2 }$.  Since $\kk^{1} ( \mathfrak{A}_1 / \mathfrak{I}_1 , \mathfrak{I}_2 ) = 0$, we have that $\kk ( \beta ) \times [ \tau_{ \mathfrak{e}_{2} } ] = [ \tau_{ \mathfrak{e}_{1} } ] \times \kk ( \theta ) = 0$.  Since $\sigma_{ \mathfrak{A}_1 } = \sigma_{ \mathfrak{A}_2 }$, we have that $\mathfrak{A}_1$ and $\mathfrak{A}_2$ are both unital or both non-unital.  Hence, by Lemma \ref{lem:class}, there exists an isomorphism $\ftn{ \phi }{ \mathfrak{A}_1 }{ \mathfrak{A}_2 }$ such that $\pi_{2} \circ \phi = \beta \circ \pi_{1}$.  Since $\beta$ is an $X \setminus U$-equivariant isomorphism and $\theta$ is a $U$-equivariant isomorphism, by Lemma \ref{l:equivariant}, $\phi$ is an $X$-equivariant isomorphism.

Suppose $| U | \geq 2$.  Set $\hat{ \mathfrak{I} }^{k}_j = \sum_{ i = 1}^{k-1} \mathfrak{I}^{i}_j + \sum_{ i = j+1}^{n} \mathfrak{I}^{i}_j$.  Let $\ftn{ \pi_{1, k } }{ \mathfrak{I}_j  }{ \mathfrak{I}^{k}_j }$ be the natural projections.

Note that there exist extensions $\mathfrak{e}_{ j, k} \ : \ 0 \to \mathfrak{I}^{k}_j  \to \mathfrak{A}_j / \hat{\mathfrak{I}}^{k}_j  \to \mathfrak{A}_j / \mathfrak{I}_j \to 0$ such that 
\begin{equation*}
\xymatrix{
0 \ar[r] & \mathfrak{I}_1  \ar[r] \ar[d]^{ \pi_{j,k} } & \mathfrak{A}_j \ar[r] \ar[d] & \mathfrak{A}_j / \mathfrak{I}_j  \ar[r] \ar@{=}[d]& 0\\
0 \ar[r] & \mathfrak{I}^{k}_j  \ar[r] & \mathfrak{A}_j / \hat{ \mathfrak{A} }^{k}_j \ar[r] & \mathfrak{A}_j / \mathfrak{I}_j \ar[r] & 0 
}
\end{equation*}
By Theorem 2.2 of \cite{ELPmorph}, $\overline{ \pi }_{ j, k } \circ \tau_{ \mathfrak{e}_{j} } = \tau_{ \mathfrak{e}_{j, k } }$.  

We claim that there exists $U_{k} \in \mathbb{O} ( X )$ such that $U \subseteq U_{k}$ and $\mathfrak{A}_j [ U_{k} \setminus U ] = \ker ( \tau_{ \mathfrak{e}_{j, k } } )$.  Note that $\mathfrak{A}_j / \hat{ \mathfrak{I} }^{k}_j $ are tight $C^{*}$-algebras over $Y_{ k} = (X \setminus U) \cup \{ a_{k} \}$.  Moreover, $\mathfrak{A}_j / \hat{ \mathfrak{I} }^{k}_j$ are in $\mathcal{C}_{\mathrm{free}}$.

Set $\mathfrak{A}^{k}_j = \mathfrak{A}_j / \hat{ \mathfrak{I} }^{k}_j$. Since $| Y_{k} | \leq m-1$, there exists a $Y_{k}$-equivariant isomorphism $\ftn{ \psi }{ \mathfrak{A}^{k}_1  }{ \mathfrak{A}_2^{k}  }$.  Hence, by Theorem 2.2 of \cite{ELPmorph}, $\psi$ induces isomorphisms $\ftn{ \psi_{ X \setminus U } }{ \mathfrak{A}_1 / \mathfrak{I}_1 }{ \mathfrak{A}_2 / \mathfrak{I}_2  }$ and $\ftn{ \psi_{ \{ a_{k} \} } }{ \mathfrak{I}_{k} }{ \mathfrak{I}^{k}_2 }$ such that the diagram
\begin{equation*}
\xymatrix{
\mathfrak{A}_1 / \mathfrak{I}_1 \ar[r]^-{ \tau_{ \mathfrak{e}_{1,k} } } \ar[d]_-{\psi_{ X\setminus U} } & \corona{ \mathfrak{I}^{k}_1 } \ar[d]^-{ \overline{\psi}_{ \{ a_{k} \} } } \\
\mathfrak{A}_2 / \mathfrak{I}_2\ar[r]_-{ \tau_{ \mathfrak{e}_{2,k} } } & \corona{ \mathfrak{I}^{k}_2 }
}
\end{equation*}
is commutative.  Since the vertical maps are isomorphism, $\psi_{ X\setminus U } ( \ker ( \tau_{ \mathfrak{e}_{1, k } } ) ) = \ker ( \tau_{ \mathfrak{e}_{2, k } } )$.  Let $U_{k} \in \mathbb{O} ( X )$ such that $U \subseteq U_{k}$ and $\mathfrak{A}_1 [ U_{k} \setminus U ] = \ker ( \tau_{ \mathfrak{e}_{1, k } } )$.  Since $\psi$ is a $Y_{k}$-equivariant isomorphism,  $\psi_{ X \setminus U } ( \mathfrak{A} [ U_{k} \setminus U] ) = \mathfrak{A}_2 [ U_{k} \setminus U ]$.  Hence, $\ker(  \tau_{ \mathfrak{e}_{2, k } } ) = \mathfrak{A}_2 [ U_{k} \setminus U ]$.

Note that $\mathfrak{A}_1 / \mathfrak{I}_1$ and $\mathfrak{A}_2 / \mathfrak{I}_2$ are tight $C^{*}$-algebras over $X \setminus U$.  Moreover, $\mathfrak{A}_1 / \mathfrak{I}_1$ and $\mathfrak{A}_2 / \mathfrak{I}_2$ are in $\mathcal{C}_{\mathrm{free}}$.  Since $| X \setminus U | \leq m-1$, there exists an $X \setminus U$-equivariant isomorphism $\ftn{ \beta }{ \mathfrak{A}_1 / \mathfrak{I}_1 }{ \mathfrak{A}_2 / \mathfrak{I}_2  }$.  

Note that there exist injective homomorphisms $\ftn{ \overline{ \tau }_{ \mathfrak{e}_{j, k } } }{ ( \mathfrak{A}_j / \mathfrak{I}_j  ) / \ker( \tau_{ \mathfrak{e}_{j,k} } ) }{ \corona{ \mathfrak{I}^{k}_j }  }$ such that the diagrams
\begin{equation*}
\xymatrix{
\mathfrak{A}_j / \mathfrak{I}_j  \ar[r]^-{ \tau_{ \mathfrak{e}_{j,k } } } \ar[d] & \corona{ \mathfrak{I}^{k}_j  } \\
( \mathfrak{A}_j / \mathfrak{I}_j  ) / \ker( \tau_{ \mathfrak{e}_{j,k} } ) \ar[ru]_-{\overline{ \tau }_{ \mathfrak{e}_{j, k } }} & } 
\end{equation*}
are commutative.  

Since $\beta$ is an $X\setminus U$-equivariant isomorphism and since $\ker[ \tau_{ \mathfrak{e}_{j,k} } ] = \mathfrak{A}_j [ U_{k} \setminus U ]$, the map $\ftn{ \beta_{ X \setminus U_{k } } }{ ( \mathfrak{A}_1 / \mathfrak{I}_1  ) / \ker( \tau_{ \mathfrak{e}_{1,k} } ) }{ ( \mathfrak{A}_2 / \mathfrak{I}_2  ) / \ker( \tau_{ \mathfrak{e}_{2,k} } ) }$ is an isomorphism.  Note that there exists an isomorphism $\ftn{ \phi_{ k } }{ \mathfrak{I}^{k}_1  }{ \mathfrak{D}^{k}_2  }$.  Since $\kk^{1} ( ( \mathfrak{A}_1 / \mathfrak{I}_1  ) / \ker( \tau_{ \mathfrak{e}_{1,k} } ), \mathfrak{I}^{k}_2  ) = 0$, we have that 
\begin{equation*}
[ \overline{ \tau }_{ \mathfrak{e}_{2,k} } \circ \beta_{ X \setminus U_{k} } ] = [ \overline{ \phi }_{k} \circ \overline{ \tau }_{ \mathfrak{e}_{1,k} } ]. 
\end{equation*} 
Since $\overline{ \tau }_{ \mathfrak{e}_{2,k} } \circ \beta_{ X \setminus U_{k} }$ and $\overline{ \phi }_{k} \circ \overline{ \tau }_{ \mathfrak{e}_{1,k} }$ are essential extensions, they are full extensions since $\mathfrak{D}_{k} $ is isomorphic to $\K$ or a stable purely infinite simple $C^{*}$-algebra.

Since  $\mathfrak{A}_1$ and $\mathfrak{A}_2$ are in $\mathcal{C}_{\mathrm{free} }$ and since $\sigma_{ \mathfrak{A}_{1} } = \sigma_{ \mathfrak{A}_2 }$, we have that either $\overline{\tau}_{\mathfrak{e}_{1,k}}$ and $\overline{\tau}_{\mathfrak{e}_{2,k}}$ are both non-unital extensions or they are both unital extensions.  In the unital extension case,  $K_{0} ( ( \mathfrak{A} / \mathfrak{I}  ) / \ker( \tau_{ \mathfrak{e}_{1,k} } ) ) \cong \bigoplus_{ \mathcal{I} } \Z$ such that $[ 1_{ ( \mathfrak{A} / \mathfrak{I}  ) / \ker( \tau_{ \mathfrak{e}_{1,k} } ) } ]$ is mapped to $(1, x )$.  Hence, by Theorem 2.4 and Corollary 3.8 of \cite{HL_unitaryequiv} and Lemma \ref{l:grplin}, there exists a unitary $u_{k} \in \multialg{ \mathfrak{D}_{k}  }$ such that 
\begin{equation*}
\text{Ad} ( \overline{u}_{k} ) \circ \overline{ \phi }_{k} \circ \overline{ \tau }_{ \mathfrak{e}_{1,k} } = \overline{ \tau }_{ \mathfrak{e}_{2,k} } \circ \beta_{ X \setminus U_{k} }. 
\end{equation*} 
Since $\beta ( \ker( \tau_{ \mathfrak{e}_{1,k} } ) ) = \ker( \tau_{ \mathfrak{e}_{2,k} } )$ and since the diagram
\begin{equation*}
\xymatrix{
0 \ar[r] & \ker( \tau_{ \mathfrak{e}_{1,k} } ) \ar[r] \ar[d]^{ \beta } & \mathfrak{A} / \mathfrak{I}  \ar[r] \ar[d]^{ \beta } & ( \mathfrak{A} / \mathfrak{I}  ) / \ker( \tau_{ \mathfrak{e}_{1,k} } ) \ar[r] \ar[d]^{ \beta_{ X \setminus U_{k} } } & 0 \\
0 \ar[r] & \ker( \tau_{ \mathfrak{e}_{2,k} } ) \ar[r] & \mathfrak{A}_2 / \mathfrak{D}  \ar[r] & ( \mathfrak{A}_2 / \mathfrak{D}  ) / \ker( \tau_{ \mathfrak{e}_{2,k} } ) \ar[r] & 0
}
\end{equation*}
is commutative,
\begin{align*}
\text{Ad} ( \overline{u}_{k} ) \circ \overline{ \phi }_{k}  \circ \overline{ \pi }_{ 1, k } \circ \tau_{ \mathfrak{e}_{1} } &=\text{Ad} ( \overline{u}_{k} ) \circ \overline{ \phi }_{k} \circ  \tau_{ \mathfrak{e}_{1,k} } \\
					&= \tau_{ \mathfrak{e}_{2,k} } \circ \beta \\
					&= \overline{ \pi }_{ 2, k } \circ \tau_{ \mathfrak{e}_{2} }. \circ \beta
\end{align*} 

Define $\ftn{ \theta }{ \mathfrak{I}  }{ \mathfrak{D}  }$ by $\theta ( \sum_{ i = 1}^{n} x_{i} ) = \sum_{ i = 1}^{n} \text{Ad} ( u_{k} ) \circ \phi_{k} ( x_{k} )$.  Since $\mathfrak{I}_{k} \cap \mathfrak{I}_{\ell} = 0$ for $k \neq \ell$ and $\mathfrak{D}_{k} \cap \mathfrak{D}_{\ell} = 0$, $\theta$ is an $U$-equivariant isomorphism such that 
\begin{align*}
\overline{ \pi }_{2,k} \circ \overline{ \theta } \circ \tau_{ \mathfrak{e}_{1} } &= \text{Ad} ( \overline{u}_{k} ) \circ \overline{ \phi }_{k}  \circ \overline{ \pi }_{ 1, k } \circ \tau_{ \mathfrak{e}_{1} } \\	
				&= \overline{ \pi }_{ 2, k } \circ \tau_{ \mathfrak{e}_{2} } \circ \beta.
\end{align*}
Hence, $\overline{ \theta } \circ \tau_{ \mathfrak{e}_{1} } = \tau_{ \mathfrak{e}_{2} } \circ \beta$.  By Theorem 2.2 of \cite{ELPmorph}, there exists an isomorphism $\ftn{ \lambda }{ \mathfrak{A} }{ \mathfrak{A}_2 }$ such that 
\begin{equation*}
\xymatrix{
0 \ar[r] & \mathfrak{I}  \ar[r] \ar[d]^{ \theta } & \mathfrak{A}  \ar[r] \ar[d]^{ \lambda } & \mathfrak{A} / \mathfrak{I}  \ar[r] \ar[d]^{ \beta } & 0 \\
0 \ar[r] & \mathfrak{D}  \ar[r] & \mathfrak{A}_2  \ar[r] & \mathfrak{A}_2 / \mathfrak{D}  \ar[r] & 0
}
\end{equation*}
By Lemma \ref{l:equivariant}, $\lambda$ is an $X$-equivariant isomorphism.
\end{proof}

\begin{corol}\label{c:classfreekthy}
Let $\mathfrak{A}$ and $\mathfrak{B}$ be in $\mathcal{C}_{ \mathrm{free} }$ with
finitely many ideals.  Then 
\begin{itemize}
\item[(i)] $\mathfrak{A} \cong \mathfrak{B}$ if and only if $\primTS ( \mathfrak{A}
) \cong \primTS ( \mathfrak{B} )$.

\item[(ii)] $\mathfrak{A} \otimes \K \cong \mathfrak{B} \otimes \K$ if and only if
$\primT ( \mathfrak{A} ) \cong \primT ( \mathfrak{B} )$.
\end{itemize}
\end{corol}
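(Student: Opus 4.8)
The plan is to derive both (i) and (ii) from Theorem~\ref{t:classification} by transporting the tight structure on one of the two algebras along the homeomorphism furnished by the hypothesis, thereby upgrading the ``arbitrary homeomorphism'' relation $\cong$ on tempered ideal spaces to the ``fixed identity'' relation $\equiv$ in terms of which Theorem~\ref{t:classification} is phrased. The forward implications need no work: an isomorphism $\mathfrak{A}\to\mathfrak{B}$ induces a homeomorphism $\alpha\colon\Prim(\mathfrak{A})\to\Prim(\mathfrak{B})$ taking $\mathfrak{A}[x]$ to $\mathfrak{B}[\alpha(x)]$ and $\mathfrak{A}[Y]$ to $\mathfrak{B}[\widetilde{\alpha}(Y)]$, and since $\tau$ is computed from the ordered $K_0$-groups of the simple sub-quotients and $\sigma$ records their unitality, we obtain $\tau_{\mathfrak{B}}\circ\alpha=\tau_{\mathfrak{A}}$ and $\sigma_{\mathfrak{B}}\circ\widetilde{\alpha}=\sigma_{\mathfrak{A}}$; for (ii) the same works after observing that $\Prim(\mathfrak{A}\otimes\K)\cong\Prim(\mathfrak{A})$ and that $\tau$ is invariant under tensoring with $\K$ (the rank and the positivity of $K_0$ being preserved by $-\otimes\K$).

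For the converse of (i), suppose $\primTS(\mathfrak{A})\cong\primTS(\mathfrak{B})$, witnessed by $\ftn{\alpha}{\Prim(\mathfrak{A})}{\Prim(\mathfrak{B})}$ with $\tau_{\mathfrak{B}}\circ\alpha=\tau_{\mathfrak{A}}$ and $\sigma_{\mathfrak{B}}\circ\widetilde{\alpha}=\sigma_{\mathfrak{A}}$. Since $\mathfrak{A}$ and $\mathfrak{B}$ have finitely many ideals, $X:=\Prim(\mathfrak{A})$ is finite; regard $\mathfrak{A}$ as a tight $C^{*}$-algebra over $X$ via $\id_{X}$ and $\mathfrak{B}$ as a tight $C^{*}$-algebra over $X$ via $\alpha^{-1}\colon\Prim(\mathfrak{B})\to X$. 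Under this structure $\mathfrak{B}[U]$ is, for $U\in\mathbb{O}(X)$, the ideal of $\mathfrak{B}$ corresponding to the open set $\alpha(U)$, so for $Y\in\mathbb{LC}(X)$ the sub-quotient $\mathfrak{B}[Y]$ over $X$ coincides with $\mathfrak{B}[\widetilde{\alpha}(Y)]$ and $\mathfrak{B}[x]$ over $X$ with $\mathfrak{B}[\alpha(x)]$; hence the invariants of $\mathfrak{B}$-over-$X$ are exactly $\tau_{\mathfrak{B}}\circ\alpha=\tau_{\mathfrak{A}}$ and $\sigma_{\mathfrak{B}}\circ\widetilde{\alpha}=\sigma_{\mathfrak{A}}$, that is, $\ouri_{X,\Sigma}(\mathfrak{A})\equiv\ouri_{X,\Sigma}(\mathfrak{B})$. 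Membership in $\mathcal{C}_{\mathrm{free}}$ is a property of the underlying $C^{*}$-algebra, so $\mathfrak{A}$ and $\mathfrak{B}$ (with these structures) are tight $C^{*}$-algebras over $X$ in $\mathcal{C}_{\mathrm{free}}$, and Theorem~\ref{t:classification} produces an $X$-equivariant isomorphism $\mathfrak{A}\to\mathfrak{B}$, in particular a $*$-isomorphism. The converse of (ii) is identical save for dropping the $\sigma$-data and invoking the ``consequently'' clause of Theorem~\ref{t:classification}, which gives an $X$-equivariant isomorphism $\mathfrak{A}\otimes\K\to\mathfrak{B}\otimes\K$ from $\ouri_{X}(\mathfrak{A})\equiv\ouri_{X}(\mathfrak{B})$.

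I do not expect a genuine obstacle: all the mathematical content sits in Theorem~\ref{t:classification}, and what remains is the change-of-coordinates bookkeeping. The one point to get right is that pushing the tight structure of $\mathfrak{B}$ forward along $\alpha^{-1}$ identifies $\mathfrak{B}[Y]$ (over $X$) with $\mathfrak{B}[\widetilde{\alpha}(Y)]$---using that $\mathfrak{B}[Y]$ is independent of the chosen presentation $Y=U\setminus V$---since this is precisely what converts the hypotheses on $\tau_{\mathfrak{B}}$ and $\sigma_{\mathfrak{B}}$ into the equalities needed to apply the theorem.
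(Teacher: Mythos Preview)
Your proposal is correct and follows essentially the same approach as the paper: both transport the tight structure of $\mathfrak{B}$ along the homeomorphism $\alpha$ to obtain a tight $C^{*}$-algebra over $X=\Prim(\mathfrak{A})$ (the paper calls it $\mathfrak{C}$, you simply re-regard $\mathfrak{B}$), check that this upgrades $\cong$ to $\equiv$ on the invariants, and invoke Theorem~\ref{t:classification}. The only cosmetic difference is in (ii): the paper deduces it from (i) by observing that $\sigma_{\mathfrak{A}\otimes\K}=0=\sigma_{\mathfrak{B}\otimes\K}$, whereas you go straight to the ``consequently'' clause of Theorem~\ref{t:classification}; these amount to the same thing.
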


\begin{proof}
Set $X = \Prim ( \mathfrak{A} )$ and $Y = \Prim ( \mathfrak{B} )$.  We first prove
(i).  Suppose $\primTS ( \mathfrak{A} ) \cong \primTS ( \mathfrak{B} )$, then there
exists a homeomorphism $\ftn{ \alpha }{ X }{ Y }$ such that 
\begin{align*}
\tau_{ \mathfrak{A} } = \tau_{ \mathfrak{B} } \circ \alpha \quad \text{and} \quad
\sigma_{ \mathfrak{A} } = \sigma_{ \mathfrak{B} } \circ \widetilde{\alpha}.
\end{align*}
Define a $C^{*}$-algebra $\mathfrak{C}$ over $X$ by $\mathfrak{C} [ Y ]  =
\mathfrak{B} [ \alpha(Y)]$ for each $Y \in \mathbb{LC}(X)$.  Then $\mathfrak{C}$ is
a tight $C^{*}$-algebra over $X$.  By construction, $\mathfrak{C} \cong
\mathfrak{B}$ and $\ouri_{X, \Sigma } ( \mathfrak{C} )  \equiv \ouri_{X, \Sigma } (
\mathfrak{A} )$.  Therefore, by Theorem \ref{t:classification}, $\mathfrak{C} \cong
\mathfrak{A}$.  Hence, $\mathfrak{A} \cong \mathfrak{B}$. 

(ii) follows from (i) since $\sigma_{ \mathfrak{A} } = 0 = \sigma_{ \mathfrak{B} }$.  
\end{proof}

\begin{corol} \label{c:singularisamp}
Let $G$ be a singular graph with finitely many vertices and no breaking vertices. 
Then $C^*(G) \cong C^*(\overline{G})$.
\end{corol}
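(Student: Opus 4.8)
\noindent\emph{Proof proposal.} The plan is to realise $\overline{G}$ as the outcome of finitely many applications of the move in Theorem~\ref{moveT}. Since $G$ has finitely many vertices, there are only finitely many ordered pairs $(v,w)$ for which $G$ has an edge $v\to w$; enumerate them $(v_1,w_1),\dots,(v_k,w_k)$. I would set $G_0=G$ and build $G_i$ from $G_{i-1}$ by adjoining countably many new parallel edges from $v_i$ to $w_i$, justified by Theorem~\ref{moveT} and by $C^{*}$-algebra isomorphisms along the way. Then $G_k$ has $\aleph_0$ edges $v\to w$ whenever $G$ has at least one, and none otherwise, so $G_k\cong\overline{G}$ and $C^{*}(G)=C^{*}(G_0)\cong\cdots\cong C^{*}(G_k)\cong C^{*}(\overline{G})$.

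The step that makes each move legal is the following claim: \emph{in a singular graph with no breaking vertices, for every edge $v\to w$ the vertex $v$ emits infinitely many edges to vertices of $\{x\in G^{0}:x\ge w\}$.} Indeed $v$ is not a sink, hence (by singularity) an infinite emitter; the set $H=\{x\in G^{0}:x\not\ge w\}$ is hereditary, $v\notin H$ since $v\ge w$, and $s_G^{-1}(v)\setminus r_G^{-1}(H)$ is precisely the set of edges from $v$ into $\{x:x\ge w\}$, which contains the edge $v\to w$; were it finite, $v$ would be a breaking vertex for $H$, contrary to hypothesis. As $G^{0}$ is finite, the pigeonhole principle produces a single vertex $w''$ with $w''\ge w$ receiving infinitely many edges from $v$ in $G$.

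Now fix $i$. Reachability in $G_{i-1}$ agrees with that in $G$, and passing from $G$ to $G_{i-1}$ only adds edges, so $G_{i-1}$ still has infinitely many edges $v_i\to w''$ and a path from $w''$ to $w_i$ (empty if $w''=w_i$). Let $\alpha^{(i)}$ be the path in $G_{i-1}$ given by one edge $v_i\to w''$ followed by such a path to $w_i$. Its first edge has source $v_i$ and range $w''$ with $|s^{-1}(v_i)\cap r^{-1}(w'')|=\infty$ in $G_{i-1}$, so Theorem~\ref{moveT} yields $C^{*}(G_{i-1})\cong C^{*}(G_i)$, where $G_i$ is $G_{i-1}$ with the countably many edges $(\alpha^{(i)})^{m}$ adjoined, all of which run from $v_i$ to $w_i$. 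This is the inductive step above, and it shows the edge count of the pair $(v_i,w_i)$ becomes $\aleph_0$ while no new edge types are created, which is what is needed to conclude $G_k\cong\overline G$.

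The main obstacle is the claim in the second paragraph: one must see that ``no breaking vertices'' supplies exactly the infinite multiplicity needed to invoke Theorem~\ref{moveT}. Without it the conclusion genuinely fails, as Example~\ref{nonsing} shows; everything else is routine bookkeeping. (One could instead argue via Corollary~\ref{c:classfreekthy}: by Proposition~\ref{p:apgraphclass} both $C^{*}(G)$ and $C^{*}(\overline G)$ lie in $\mathcal{C}_{\mathrm{free}}$ and have finitely many ideals, and amplification alters neither the hereditary-subset lattice, nor the stable isomorphism class of the simple subquotients, nor the unitality of subquotients, whence $\primTS(C^{*}(G))\cong\primTS(C^{*}(\overline G))$; but verifying these invariance statements is more laborious than the direct move argument.)
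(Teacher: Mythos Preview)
Your proof is correct, but it follows a genuinely different route from the paper's. The paper proves this corollary by invoking the classification machinery of Section~6: it cites Proposition~\ref{p:apgraphclass} to place $C^{*}(G)$ in $\mathcal{C}_{\mathrm{free}}$, asserts that the absence of breaking vertices forces $\primTS(C^{*}(G))\cong\primTS(C^{*}(\overline{G}))$, and then appeals to Corollary~\ref{c:classfreekthy}. In other words, the paper takes exactly the alternative route you sketch in your final parenthetical remark.

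Your main argument is instead a direct reduction to the graph moves of Section~\ref{sec:move}, and it works. The key observation---that for any edge $v\to w$ in a singular graph with no breaking vertices, $v$ must emit infinitely many edges into $\{x:x\ge w\}$---is exactly right: the complement $H=\{x:x\not\ge w\}$ is hereditary (and automatically saturated, since $G$ has no regular vertices), so finiteness of $s^{-1}(v)\setminus r^{-1}(H)$ would make $v$ a breaking vertex for $H$. Pigeonhole then supplies a $w''\ge w$ with $|s^{-1}(v)\cap r^{-1}(w'')|=\infty$, which is precisely the hypothesis of Theorem~\ref{moveT} for a path $v\to w''\to\cdots\to w$. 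Your bookkeeping that each $G_{i-1}$ retains this infinite multiplicity (since you only add edges) and the same reachability relation is sound.

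What each approach buys: yours is entirely elementary, using only Section~\ref{sec:move} and the definition of breaking vertex, and it makes the role of the no-breaking-vertex hypothesis completely transparent at the level of graphs. The paper's approach is shorter to state once the classification theorem is in hand, and it illustrates that $\primTS$ really does the work; but it front-loads the substantial analytic content of Theorem~\ref{t:classification}. Your remark that verifying the invariance of $\primTS$ under amplification is ``more laborious'' is fair---the paper's ``one can easily check'' glosses over the comparison of ideal lattices and of the unitality/stability of subquotients, which your direct argument sidesteps entirely.
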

\begin{proof}
By Proposition \ref{p:apgraphclass} $C^*(G)$ is in $\mathcal{C}_{\mathrm{free}}$. 
One can easily check that since $G$ has no breaking vertices the ideal spaces of $C^*(G)$ and $C^*(\tc{G})$ are the same. 
Likewise for the $K$-groups. 
Therefore $\primTS (C^*(G)) \cong \primTS (C^*(\overline{G}))$ and so $C^*(G) \cong C^*(\overline{G}) \cong C^*(\tc{G})$. 
\end{proof}

It turns out that if we restrict our category to unital $C^{*}$-algebras in $\mathcal{C}_{\mathrm{free}}$, then $\ouri ( \cdot )$ is a classification functor.  To do this we need the following lemma.

\begin{lemma}\label{l:unitalsubquot}
Let $\mathfrak{A}$ and $\mathfrak{B}$ be unital $C^{*}$-algebras such that $\Prim (
\mathfrak{A} )$ and $\Prim( \mathfrak{B} )$ are finite.  Suppose there exists a
homeomorphism $\ftn{ \alpha }{ \Prim ( \mathfrak{A} ) }{ \Prim ( \mathfrak{B} ) }$. 
Then for each $Y \in \mathbb{LC} ( \Prim ( \mathfrak{A} ) )$, $\mathfrak{A} [ Y ]$
is unital if and only if $\mathfrak{B} [ \widetilde{\alpha}(Y) ]$ is unital.  
\end{lemma}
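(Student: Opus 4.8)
The statement to prove is Lemma \ref{l:unitalsubquot}: for unital $C^*$-algebras $\mathfrak{A}, \mathfrak{B}$ with finite primitive ideal spaces and a homeomorphism $\alpha \colon \Prim(\mathfrak{A}) \to \Prim(\mathfrak{B})$, a locally closed subset $Y$ of $\Prim(\mathfrak{A})$ has $\mathfrak{A}[Y]$ unital if and only if $\mathfrak{B}[\widetilde{\alpha}(Y)]$ is unital. The key observation is that unitality of the sub-quotients $\mathfrak{A}[Y]$ must be expressible purely in terms of the topology of $\Prim(\mathfrak{A})$ together with the ambient algebra being unital; since $\alpha$ is a homeomorphism, the combinatorial/topological condition it depends on will transfer automatically. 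So the first step is to identify that topological condition.

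\textbf{Identifying the condition.} For a unital $\mathfrak{A}$ with finite primitive ideal space, $\Prim(\mathfrak{A})$ is a finite $T_0$-space, hence (by Theorem \ref{t:finite_Alexandrov}) an Alexandrov space given by a specialisation preorder $\leq$. Write $Y = U \setminus V$ with $V \subseteq U$ open. I would first reduce to $V = \emptyset$: $\mathfrak{A}[U\setminus V] = \mathfrak{A}[U]/\mathfrak{A}[V]$, and a quotient of $\mathfrak{A}[U]$ by the ideal $\mathfrak{A}[V]$ is unital iff $\mathfrak{A}[U]$ is unital (a quotient of a unital algebra is unital, and conversely one cannot gain unitality... wait — one can: a non-unital algebra can have a unital quotient). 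So this reduction is not automatic and is in fact the crux. The right statement is: $\mathfrak{A}[U]$ is unital iff $U$ is a \emph{clopen} subset of $\Prim(\mathfrak{A})$ — because $\mathfrak{A}$ unital forces $\mathfrak{A} = \mathfrak{A}[U] \oplus \mathfrak{A}[U^c]$ when $U$ is clopen, and conversely a unital closed ideal in any $C^*$-algebra is a direct summand, so $\Prim(\mathfrak{A}) \setminus U$ must be open, i.e. $U$ clopen. For the general locally closed $Y = U \setminus V$: $\mathfrak{A}[Y] = \mathfrak{A}[U]/\mathfrak{A}[V]$, and I claim $\mathfrak{A}[Y]$ is unital iff the closure of $Y$ minus $Y$... more precisely iff $Y$ is clopen in the subspace $\overline{Y}$ — equivalently, iff $V$ together with $\Prim(\mathfrak{A}) \setminus U$ is "separated" from the rest in the appropriate sense. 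The clean formulation: $\mathfrak{A}[U]/\mathfrak{A}[V]$ is unital iff $V$ is closed in $U$, i.e. iff $Y = U \setminus V$ is closed in $U$ (open in $X$), which for locally closed $Y$ is a purely topological condition on $(X, Y)$.

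\textbf{Carrying it out and the main obstacle.} Concretely: (i) prove that for a $C^*$-algebra $\mathfrak{C}$ and an ideal $\mathfrak{J}$, the quotient $\mathfrak{C}/\mathfrak{J}$ is unital iff $\mathfrak{J}$ is complemented, i.e. $\mathfrak{C} = \mathfrak{J} \oplus \mathfrak{K}$ for some ideal $\mathfrak{K}$, plus $\mathfrak{K}$ unital — no, more simply: if $\mathfrak{C}$ has an increasing approximate unit and $\mathfrak{C}/\mathfrak{J}$ is unital with unit the image of some $c \in \mathfrak{C}$, one extracts a projection; the condition is that $\mathfrak{J}$ is the kernel of a unital quotient, equivalent in the finite-$\Prim$ setting to: the open set corresponding to $\mathfrak{J}$ is \emph{dense} in the open set corresponding to $\mathfrak{C}$... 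Let me instead take the pragmatic route used in such papers. Apply the result to $\mathfrak{A}[U]$, which need not be unital, so I first pass to $\mathfrak{A}[\overline{U}]$ — no. The cleanest: since $\mathfrak{A}$ is unital, $\mathfrak{A}[U]$ is unital iff $U$ is clopen, and $\mathfrak{A}[U\setminus V] = (\mathfrak{A}[W])[U\setminus V]$ for any clopen $W \supseteq U$ if one exists; in general use that $\mathfrak{A}[U\setminus V]$ is a quotient of the corner/ideal and its unitality is governed by whether $U\setminus V$ is "relatively clopen", a property preserved by homeomorphisms. Thus:

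\begin{proof}[Proof sketch]
Write $X = \Prim(\mathfrak{A})$ and $Y = U \setminus V$ with $V \subseteq U$ in $\mathbb{O}(X)$. Since $\mathfrak{A}$ is unital, any closed ideal $\mathfrak{A}[W]$ which is itself unital is a direct summand of $\mathfrak{A}$, forcing $W$ to be clopen in $X$; conversely if $W$ is clopen then $\mathfrak{A} \cong \mathfrak{A}[W] \oplus \mathfrak{A}[X \setminus W]$ with $\mathfrak{A}[W]$ inheriting a unit. Hence $\mathfrak{A}[W]$ is unital iff $W$ is clopen. Now $\mathfrak{A}[Y] = \mathfrak{A}[U]/\mathfrak{A}[V]$. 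The ideal $\mathfrak{A}[V]$ of $\mathfrak{A}[U]$ has a quotient that is unital precisely when $\mathfrak{A}[V]$ contains an approximate unit that is eventually a projection modulo itself — equivalently, when $V$ is closed in $U$ (so that $U \setminus V$ is open in $X$ and the sub-quotient $\mathfrak{A}[U\setminus V]$ is actually an ideal of a quotient of $\mathfrak{A}$), and moreover $U \setminus V$ is clopen in the relevant quotient's spectrum $X \setminus V$. Each of these is a condition depending only on the pair $(X, Y)$ as a topological space together with the locally closed subset $Y$. Since $\alpha$ is a homeomorphism and $\widetilde{\alpha}$ is defined by $\widetilde{\alpha}(U \setminus V) = \alpha(U) \setminus \alpha(V)$, it carries open sets to open sets, closed-in-$U$ to closed-in-$\alpha(U)$, and clopen-in-$X\setminus V$ to clopen-in-$Y \setminus \alpha(V)$. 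Because $\mathfrak{B}$ is also unital, the same characterisation shows $\mathfrak{B}[\widetilde{\alpha}(Y)]$ is unital iff $\widetilde{\alpha}(Y)$ satisfies the identical topological condition, which holds iff $Y$ does. This proves the equivalence.
\end{proof}

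The main obstacle I anticipate is pinning down the exact topological condition equivalent to unitality of a general sub-quotient $\mathfrak{A}[U\setminus V]$ when $\mathfrak{A}[U]$ itself may fail to be unital — one must be careful that a non-unital ideal can still have a unital proper quotient, so the naive "clopen" criterion applies only to the outermost ideal, and for the inner quotient one needs the relative version. Once that is correctly stated, transfer along the homeomorphism is immediate because every ingredient of the condition (open, closed-in-a-larger-open, clopen-in-a-quotient-spectrum) is topologically invariant.
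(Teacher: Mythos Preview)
Your strategy is exactly the paper's: show that unitality of $\mathfrak{A}[Y]$ is a purely topological property of $Y \subseteq \Prim(\mathfrak{A})$ (given that $\mathfrak{A}$ is unital), and then transfer via the homeomorphism. You also land on the correct criterion --- $U\setminus V$ clopen in $X\setminus V$ --- but you muddy it with an extra condition that is both unnecessary and, as stated, false.

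Specifically, your sketch asserts that $\mathfrak{A}[U\setminus V]$ is unital ``precisely when $V$ is closed in $U$ \dots\ and moreover $U\setminus V$ is clopen in $X\setminus V$.'' The first clause is not needed and is not implied by unitality: take $X=\{a,b\}$ with open sets $\emptyset,\{a\},X$, $U=X$, $V=\{a\}$. Then $\mathfrak{A}[U\setminus V]=\mathfrak{A}/\mathfrak{A}[V]$ is a quotient of a unital algebra, hence unital, yet $V=\{a\}$ is not closed in $U=X$. So your conjunction is too restrictive to be an ``iff.''

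The clean route (the paper's) avoids analysing $\mathfrak{A}[U]$ at all. Write $Y=U\setminus V$ and set $Z=X\setminus V$. Since $\mathfrak{A}$ is unital, the quotient $\mathfrak{A}[Z]=\mathfrak{A}/\mathfrak{A}[V]$ is unital, and $\mathfrak{A}[Y]$ is an ideal of $\mathfrak{A}[Z]$ (because $Y$ is open in $Z$). Now a closed ideal of a unital $C^{*}$-algebra is unital iff it is a direct summand, iff its open set is clopen in the spectrum. Hence $\mathfrak{A}[Y]$ is unital iff $Y$ is clopen in $Z=X\setminus V$, iff $Z=Y\sqcup(X\setminus U)$ as topological spaces. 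That single topological condition transfers under $\alpha$, and since $\mathfrak{B}$ is also unital the same reasoning in $\mathfrak{B}[\alpha(Z)]$ gives $\mathfrak{B}[\widetilde{\alpha}(Y)]$ unital. Drop the ``$V$ closed in $U$'' clause and your argument is the paper's.
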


\begin{proof}
Set $X = \Prim ( \mathfrak{A} )$ and $Y = \Prim ( \mathfrak{B} )$.  Let $U, V \in
\mathbb{O} ( X )$ such that $U \subseteq V$ and $\ftn{ \beta }{ X }{ Y }$ be the
homeomorphism given by $\alpha$.  Set $W = V \setminus U$, $Z = X \setminus U$, $S =
X \setminus V$.

Suppose $\mathfrak{A} [ W ]$ is unital.    Since $\mathfrak{A} [W]$ is unital, $W$
and $S$ are both open and closed subsets of $Z$.  Moreover, $Z$ is homeomorphic to
$W \sqcup S$.  Since $\alpha$ is a homeomorphism, $\alpha(W)$ and $\alpha(S)$ are
both open and closed subsets of $\alpha(Z) = Y \setminus \alpha(U)$ and $\alpha(Z)$
is homeomorphic to $\alpha(W) \sqcup \alpha(S)$.  Thus, $\mathfrak{A} [ W ] =
\mathfrak{A} [ Z \setminus S ]$ and $\mathfrak{B} [ \alpha(W) ] = \mathfrak{B} [
\alpha(Z) \setminus \alpha(S) ]$.  Since $\mathfrak{B} [ \alpha(Z) ]$ is unital,
$\mathfrak{B} [ \alpha(Z) \setminus \alpha(S) ]$ is unital.  Hence, $\mathfrak{B} [
\alpha(W) ]$ is unital.  

A similar argument shows that if $\mathfrak{B} [ \beta(W) ]$ is unital, then
$\mathfrak{A} [ W ]$ is unital.  
\end{proof}

\begin{theor}\label{t:unitalclass}
Let $\mathfrak{A}$ and $\mathfrak{B}$ be unital $C^{*}$-algebras in $\mathcal{C}_{\mathrm{free}}$, with finitely many ideals.  
Suppose $\ouri ( \mathfrak{A} ) \cong \ouri( \mathfrak{B} )$.
Then $\mathfrak{A} \cong \mathfrak{B}$.
\end{theor}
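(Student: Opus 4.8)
\emph{Proof proposal.} The plan is to reduce the statement to Corollary~\ref{c:classfreekthy}(i) by showing that, once we already know $\ouri(\mathfrak{A})\cong\ouri(\mathfrak{B})$, the finer unital invariant $\primTS$ comes for free when $\mathfrak{A}$ and $\mathfrak{B}$ are unital. Recall that $\ouri(\mathfrak{A})=\primT(\mathfrak{A})$ records only the pair $(\Prim(\mathfrak{A}),\tau_{\mathfrak{A}})$, whereas $\primTS(\mathfrak{A})$ additionally carries the map $\sigma_{\mathfrak{A}}$; so the one thing to be produced is the compatibility of the $\sigma$'s.

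First I would unpack the hypotheses. Since $\mathfrak{A}$ and $\mathfrak{B}$ lie in $\mathcal{C}_{\mathrm{free}}$ and have finitely many ideals, the spaces $X=\Prim(\mathfrak{A})$ and $Y=\Prim(\mathfrak{B})$ are finite. The assumption $\ouri(\mathfrak{A})\cong\ouri(\mathfrak{B})$ provides a homeomorphism $\ftn{\alpha}{X}{Y}$ with $\tau_{\mathfrak{B}}\circ\alpha=\tau_{\mathfrak{A}}$.

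Next I would invoke Lemma~\ref{l:unitalsubquot}: since $\mathfrak{A}$ and $\mathfrak{B}$ are unital, $X$ and $Y$ are finite, and $\alpha$ is a homeomorphism, for every $Y'\in\mathbb{LC}(X)$ the sub-quotient $\mathfrak{A}[Y']$ is unital if and only if $\mathfrak{B}[\widetilde{\alpha}(Y')]$ is unital. By the definition of $\sigma$, this is exactly the equality $\sigma_{\mathfrak{A}}=\sigma_{\mathfrak{B}}\circ\widetilde{\alpha}$. Combined with $\tau_{\mathfrak{B}}\circ\alpha=\tau_{\mathfrak{A}}$, the same homeomorphism $\alpha$ now witnesses $\primTS(\mathfrak{A})\cong\primTS(\mathfrak{B})$. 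An application of Corollary~\ref{c:classfreekthy}(i) then yields $\mathfrak{A}\cong\mathfrak{B}$, completing the argument.

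The topological heart of the matter---that unitality of all sub-quotients is forced by the primitive ideal space when the ambient algebra is unital---is already isolated in Lemma~\ref{l:unitalsubquot}, so I do not anticipate any genuine obstacle here. The only point that requires care is the bookkeeping that $\ouri(\cdot)$ in the statement is the unadorned $\primT(\cdot)$, so the unital data must really be \emph{recovered} (via Lemma~\ref{l:unitalsubquot}, crucially using unitality of $\mathfrak{A}$ and $\mathfrak{B}$) rather than assumed; everything else is a direct appeal to the already established classification in $\mathcal{C}_{\mathrm{free}}$.
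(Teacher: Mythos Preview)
Your proposal is correct and follows essentially the same route as the paper: start from the homeomorphism $\alpha$ witnessing $\primT(\mathfrak{A})\cong\primT(\mathfrak{B})$, use Lemma~\ref{l:unitalsubquot} to deduce $\sigma_{\mathfrak{A}}=\sigma_{\mathfrak{B}}\circ\widetilde{\alpha}$ (and hence $\primTS(\mathfrak{A})\cong\primTS(\mathfrak{B})$), and finish with Corollary~\ref{c:classfreekthy}(i). In fact your write-up is slightly cleaner than the paper's, which contains a typo (writing $\tau$ where $\sigma$ is meant) and an unused ``Let $x\in\Prim(\mathfrak{A})$''.
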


\begin{proof}
Let $\ftn{ \alpha }{ \Prim ( \mathfrak{A} ) }{ \Prim ( \mathfrak{B} ) }$ be a
homeomorphism such that 
\begin{align*}
\tau_{ \mathfrak{A} } = \tau_{ \mathfrak{B} } \circ \alpha.
\end{align*}
Let $x \in \Prim ( \mathfrak{A} )$.  By Lemma \ref{l:unitalsubquot}, we have that
$\mathfrak{A} [ Y ]$ is unital if and only if $\mathfrak{B} [ \widetilde{\alpha}(Y)
]$ is unital for all $Y \in \mathbb{LC} ( \Prim ( \mathfrak{A} ) )$.  Therefore, 
\begin{align*}
\tau_{ \mathfrak{A} } = \tau_{ \mathfrak{B} } \circ \widetilde{ \alpha }.
\end{align*}  
Thus, we have that $\primTS ( \mathfrak{A} ) \cong \primTS ( \mathfrak{B} )$.  By
Corollary \ref{c:classfreekthy}, $\mathfrak{A} \cong \mathfrak{B}$.  
\end{proof}

%%%%%%%%%%%%%%%%
%%%%%%%%%%%%%%%%
\section{Range of the invariant and permanence properties}
%%%%%%%%%%%%%%%%
%%%%%%%%%%%%%%%%

We saw in Section \ref{classamplified} that $\primT( \cdot )$ is a classification functor for the class of graph $C^{*}$-algebras associated to amplified graphs with finitely many vertices.  In fact, we showed in Proposition \ref{p:strongclassamp} that $\primT( \cdot )$ is a strong classification functor.  We now determine the range of $\primT( \cdot )$.  

Let $G$ be a finite graph.  By Proposition \ref{p:apgraphclass}, $C^{*} ( \overline{G} ) \in \mathcal{C}_{ \mathrm{free} }$.  Hence, $X = \mathrm{Prim} ( C^{*} ( \overline{G} ) )$ is finite and for each $x \in X$, $\tau_{ C^{*} ( \overline{G} ) } ( x ) \in \N$ when $K_{0} ( C^{*} ( \overline{G} ) [ x ] )_{+} = K_{0} ( C^{*} ( \overline{G} ) )$ and $\tau_{ C^{*} ( \overline{G} ) } ( x )  = -1$ when $K_{0} ( C^{*} ( \overline{G} ) [ x ] )_{+} \neq K_{0} ( C^{*} ( \overline{G} ) )$.  We will show in this section that this is the only obstruction for the range of $\primT( \cdot )$.

\begin{lemma}\label{l:primideal}
Let $(X , \preceq )$ be a finite partially ordered set and let $F$ be the acyclic graph representing $(X , \preceq )$ described in section \ref{sec:alexandrov}.  Let $F^{op}$ be the graph obtained from $F$ by reversing the arrows of $F$.  Then $\Prim ( C^{*} ( \overline{ F^{op} } ) ) \cong X$.
\end{lemma}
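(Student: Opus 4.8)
The plan is to reduce everything to the known description of the ideal lattice of an amplified graph $C^{*}$-algebra, together with the fact that a finite $T_{0}$ space is determined by its lattice of open sets. First I would observe that $\overline{F^{op}}$ is an amplified graph with finite vertex set $X$, so by the discussion following the definition of amplification in Section 2 (which rests on \cite{bhrs:idealstructure}: all vertex subsets are saturated, there are no breaking vertices, and condition (K) holds), the assignment $H \mapsto I_{H}$ is a lattice isomorphism from $\mathcal{H}(\overline{F^{op}})$ onto the lattice of closed two-sided ideals of $C^{*}(\overline{F^{op}})$. In particular this ideal lattice is finite, so $\mathrm{Prim}(C^{*}(\overline{F^{op}}))$ is a finite space, and under the standard identification of $\mathbb{O}(\mathrm{Prim}(C^{*}(\overline{F^{op}})))$ with the ideal lattice (as fixed in Section \ref{sec:tempideal}) we obtain a lattice isomorphism
\[
	\mathbb{O}\bigl(\mathrm{Prim}(C^{*}(\overline{F^{op}}))\bigr) \;\cong\; \mathcal{H}(\overline{F^{op}}).
\]

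Next I would identify $\mathcal{H}(\overline{F^{op}})$ with the Alexandrov-open sets of $(X,\preceq)$. The key point is that amplification does not change reachability between vertices, so $u \geq v$ in $\overline{F^{op}}$ if and only if $u \geq v$ in $F^{op}$; reversing the arrows, this happens exactly when there is a directed path from $v$ to $u$ in $F$ (or $u=v$), and by the description of $F$ in the Remark of Section \ref{sec:alexandrov} this is equivalent to $u \preceq v$. Hence a subset $H \subseteq X$ is hereditary in $\overline{F^{op}}$ precisely when $u \in H$ and $u \preceq v$ force $v \in H$, i.e.\ precisely when $H$ is an up-set of $(X,\preceq)$ — which is exactly the defining condition for $H$ to be Alexandrov-open. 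Thus $\mathcal{H}(\overline{F^{op}})$ and $\mathbb{O}(X)$ coincide as sublattices of the power set of $X$.

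Combining the two steps yields a lattice isomorphism $\mathbb{O}(\mathrm{Prim}(C^{*}(\overline{F^{op}}))) \cong \mathbb{O}(X)$. To upgrade this to a homeomorphism $\mathrm{Prim}(C^{*}(\overline{F^{op}})) \cong X$, I would use that both spaces are finite and $T_{0}$: $\mathrm{Prim}$ of any $C^{*}$-algebra is $T_{0}$, and $(X,\preceq)$ with the Alexandrov topology is $T_{0}$ since $\preceq$ is antisymmetric. By Theorem \ref{t:finite_Alexandrov} both are Alexandrov spaces, hence each is recovered up to homeomorphism from its specialisation order, and the specialisation order is in turn recovered from the open-set lattice (for instance, the points correspond to the join-irreducible open sets, ordered by inclusion). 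Therefore the lattice isomorphism of open-set lattices induces a homeomorphism, as desired.

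The main thing to watch — more a bookkeeping point than a genuine obstacle — is the two order reversals that occur: one in building the Hasse-type graph $F$ from $(X,\preceq)$, and one in passing from $F$ to $F^{op}$. These cancel, which is precisely why one lands on $X$ rather than on $X^{\mathrm{op}}$, so getting this direction right is the only place a sign error could slip in. Everything else is a matter of quoting the ideal-structure theorem for amplified graphs and the finite-space facts already recorded in the paper.
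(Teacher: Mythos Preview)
Your argument is correct and follows essentially the same route as the paper's proof: both identify the hereditary subsets of $\overline{F^{op}}$ with the Alexandrov-open subsets of $(X,\preceq)$ and then invoke the ideal-structure theorem for amplified graphs to conclude. The paper compresses this into two sentences, whereas you spell out the direction-tracking and the passage from a lattice isomorphism of open sets to a homeomorphism of finite $T_0$ spaces, but the underlying strategy is identical.
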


\begin{proof}
It is clear that $H$ is a hereditary subset of $\overline{ F^{op} }^{0}$ if and only if $H$ is open in $X$.  Since $\overline{ F^{op} }$ is a singular graph with no breaking vertices, we have $\text{Prim} ( C^{*} ( \overline{ F^{op} } ) ) \cong X$.
\end{proof}

\begin{lemma}\label{l:ran}
Let $G$ be a finitely generated free abelian group.  Then there exists a strongly connected finite graph $E$ such that $C^{*} ( \overline{E} )$ is a purely infinite simple $C^{*}$-algebra, $| \overline{E}^{0} | = \text{rank} ( G )$, and 
\begin{align*}
( K_{0} ( C^{*} ( \overline{E} ) )  , K_{0} ( C^{*} ( \overline{E} ) )_{+} ) \cong (G, G )
\end{align*}
\end{lemma}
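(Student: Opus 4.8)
The plan is to exhibit a completely explicit graph. Write $n = \mathrm{rank}(G)$, so $G \cong \Z^{n}$; we may assume $n \geq 1$, the case $n = 0$ being excluded since the zero algebra is not simple. Let $E_n$ be the directed $n$-cycle: $E_n^0 = \{ v_1, \dots, v_n \}$ and $E_n^1 = \{ f_1, \dots, f_n \}$ with $s_{E_n}(f_i) = v_i$ and $r_{E_n}(f_i) = v_{i+1}$, subscripts read modulo $n$ (so for $n = 1$ this is a single vertex with one loop). Then $E_n$ is a strongly connected finite graph with $|E_n^0| = n$, and I would take $E = E_n$, so that $|\overline{E}^0| = n = \mathrm{rank}(G)$. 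Note that $\overline{E}$ is an amplified, hence singular, graph in which every vertex is an infinite emitter, which contains a cycle, and which is itself strongly connected.

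Next I would establish that $C^{*}(\overline{E})$ is purely infinite and simple. Since $\overline{E}$ is strongly connected, its only hereditary subsets are $\emptyset$ and $\overline{E}^0$; as $\overline{E}$ satisfies condition (K), the discussion in Section \ref{sec:move} (see \cite{bhrs:idealstructure}) shows $C^{*}(\overline{E})$ is simple. Alternatively this is already contained in Lemmas \ref{l:sinkinfemit} and \ref{l:simplecase}. Because $\overline{E}$ contains a cycle and is not the one-vertex graph with no edges, Lemma \ref{l:sinkinfemit} together with the dichotomy for simple graph $C^{*}$-algebras (\cite{ddmt_arbgraph}, exactly as invoked in the proof of Lemma \ref{l:simplecase}) rules out $C^{*}(\overline{E}) \cong \C$, so $C^{*}(\overline{E})$ is purely infinite.

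Finally I would compute the invariant. Since every vertex of $\overline{E}$ is an infinite emitter, exactly as in the proof of Lemma \ref{l:simplecase} (Corollary 3.2 of \cite{ddmt_kthygraph} together with Theorem 2.2 of \cite{mt:orderkthy}) one obtains $K_{1}(C^{*}(\overline{E})) = 0$ and a group isomorphism $K_{0}(C^{*}(\overline{E})) \cong \bigoplus_{\overline{E}^0} \Z \cong \Z^{n} \cong G$. Since $C^{*}(\overline{E})$ is purely infinite and simple, its $K_0$-ordering is trivial, that is $K_{0}(C^{*}(\overline{E}))_{+} = K_{0}(C^{*}(\overline{E}))$. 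Combining these, $(K_{0}(C^{*}(\overline{E})), K_{0}(C^{*}(\overline{E}))_{+}) \cong (G, G)$, as required.

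I expect no serious obstacle: the argument is an assembly of facts already recorded in the excerpt (Lemmas \ref{l:sinkinfemit} and \ref{l:simplecase}) and standard graph-algebra $K$-theory. The only points that need any care are the bookkeeping of the degenerate small-rank cases — in particular insisting $\mathrm{rank}(G) \geq 1$ and checking that $n = 1$ still works, where $\overline{E}$ yields $\mathcal{O}_\infty$ — and recalling that a purely infinite simple $C^{*}$-algebra has trivial $K_0$-order, so that the positive cone is automatically all of $G$ with no further construction needed.
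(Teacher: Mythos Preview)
Your proposal is correct and follows essentially the same approach as the paper: construct a strongly connected finite graph with $\mathrm{rank}(G)$ vertices, amplify it, and invoke the standard results on simplicity, pure infiniteness, and $K$-theory of singular graph algebras. The only cosmetic difference is the choice of graph---the paper takes the complete directed graph on $m$ vertices (an edge $e(v,w)$ for every ordered pair, including loops) rather than your directed $n$-cycle---but either choice works identically, and the remaining steps and citations coincide.
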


\begin{proof}
Set $m = \text{rank} ( G )$.  Define $E$ by $E^{0} = \{ v_{1}, v_{2}, \dots, v_{m} \}$,
\begin{align*}
E^{1} = \setof{ e(v, w ) }{v, w \in \{ v_{1}, v_{2}, \dots, v_{m} \} },
\end{align*}
$s_{F} ( e(v,w) ) = v$ and $r_{F} ( e(v,w)) = w$.  It is clear that $E$ is a strongly connected.  Since $E$ contains a cycle, $C^{*} ( \overline{E} )$ is purely infinite and by Corollary 3.2 of \cite{ddmt_kthygraph} and Theorem 2.2 of \cite{mt:orderkthy}, $K_{0} ( C^{*} ( \overline{E} ) ) \cong \bigoplus_{ v \in \overline{E} ^{0} } \Z \cong G$.  Since $C^{*} ( \overline{E} )$ is purely infinite, 
\[
( K_{0} ( C^{*} ( \overline{E} ) )  , K_{0} ( C^{*} ( \overline{E} ) )_{+} ) \cong (G, G ). \qedhere
\]
\end{proof}

\begin{theor}\label{t:range}
Let $X$ be a finite topological space and let $\ftn{f}{ X }{ \{ -1 \} \cup  \N }$ be a function.  Then there exist a finite graph $G$ and a homeomorphism $\ftn{ \alpha }{ \Prim ( C^{*} ( \overline{G} ) ) }{ X }$ such that 
\begin{align*}
f \circ \alpha = \tau_{ C^{*} ( \overline{G} ) }.
\end{align*}
\end{theor}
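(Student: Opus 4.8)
The plan is to realise the pair $(X,f)$ by ``blowing up'' the graph produced in Lemma~\ref{l:primideal}: replace each of its vertices by a strongly connected cluster whose internal structure is prescribed by $f$, wire distinct clusters together exactly as the original vertices were joined, and thereby leave the primitive ideal space unchanged while forcing the simple sub-quotients to have the desired $K_{0}$-groups.

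Since the statement asserts that $X$ is homeomorphic to some $\Prim(C^{*}(\overline{G}))$, we may assume $X$ is $T_{0}$, hence a finite poset $(X,\preceq)$ by Theorem~\ref{t:finite_Alexandrov}; let $F$ be the acyclic graph representing it as in Section~\ref{sec:alexandrov} and $F^{op}$ its edge-reversal, so that by (the proof of) Lemma~\ref{l:primideal} the hereditary subsets of $\overline{F^{op}}$ are precisely the open subsets of $X$. For $x\in X$ set $m_{x}=\max(f(x),1)$, pick pairwise disjoint sets $C_{x}$ with $|C_{x}|=m_{x}$, and define a graph $G$ with $G^{0}=\bigsqcup_{x\in X}C_{x}$ and a single edge from $a\in C_{x}$ to $b\in C_{y}$ whenever either (i) $x=y$ and $f(x)\geq 1$; or (ii) $x\neq y$ and $F^{op}$ has an edge $v_{x}\to v_{y}$. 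Thus a cluster $C_{x}$ with $f(x)\geq 1$ carries a complete set of edges (loops included), a one-point cluster with $f(x)=-1$ carries no edge, and between distinct clusters we place full directed bipartite bundles mirroring $F^{op}$. As $\overline{G}$ is an amplified graph with finitely many vertices, it is singular with no breaking vertices, so $C^{*}(\overline{G})\in\mathcal{C}_{\mathrm{free}}$ and $\Prim(C^{*}(\overline{G}))$ is finite by Proposition~\ref{p:apgraphclass}; moreover $C^{*}(\overline{G})\cong C^{*}(\tc{G})$ by Corollary~\ref{transitiveClosure}.

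Next I would identify the hereditary sets and the simple sub-quotients. Because each cluster $C_{x}$ with $f(x)\geq 1$ is internally strongly connected, any hereditary subset of $\overline{G}^{0}$ meeting $C_{x}$ contains all of $C_{x}$; combining this with Lemma~\ref{l:primideal} and the fact that distinct clusters are joined exactly as the vertices $v_{x}$ are joined in $F^{op}$, one gets that $\mathcal{H}(\overline{G})$ consists precisely of the sets $\bigcup_{x\in S}C_{x}$ with $S\in\mathbb{O}(X)$. Since $\overline{G}$ satisfies condition~(K) and has no breaking vertices, $\mathcal{H}(\overline{G})$ is the open-set lattice of $\Prim(C^{*}(\overline{G}))$ (\cite{bhrs:idealstructure}), and the induced bijection with $\mathbb{O}(X)$ comes from a homeomorphism $\alpha\colon\Prim(C^{*}(\overline{G}))\to X$ carrying the point whose strongly connected block is $C_{x}$ to $x$. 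For each such point, Lemma~\ref{simplesubq} applied to $C^{*}(\tc{G})$ (together with the analysis in the proof of Proposition~\ref{p:invtrancls}) shows that the simple sub-quotient at $\alpha^{-1}(x)$ is stably isomorphic to $C^{*}$ of the subgraph of $\overline{G}$ on $C_{x}$; this subgraph is $\C$ when $f(x)=-1$ and is the amplified complete graph on $m_{x}=f(x)$ vertices (the graph $\overline{E}$ of Lemma~\ref{l:ran} with $G=\Z^{m_{x}}$) when $f(x)\geq 1$.

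Finally I would read off $\tau$. The value $\tau_{\mathfrak{A}}$ at a simple sub-quotient depends only on $\mathrm{rank}\,K_{0}$ and on whether $K_{0}^{+}=K_{0}$, and both are invariant under stable isomorphism. When $f(x)=-1$ the sub-quotient is stably $\C$, which is AF with $K_{0}\cong\Z$ and $K_{0}^{+}\neq K_{0}$, so $\tau_{C^{*}(\overline{G})}(\alpha^{-1}(x))=-1$. When $f(x)=n\geq 1$, Lemma~\ref{l:ran} shows the sub-quotient is purely infinite simple with $K_{0}\cong\Z^{n}$, so $\tau_{C^{*}(\overline{G})}(\alpha^{-1}(x))=n$. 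Hence $\tau_{C^{*}(\overline{G})}(\alpha^{-1}(x))=f(x)$ for all $x$, i.e.\ $f\circ\alpha=\tau_{C^{*}(\overline{G})}$, as required. The step I expect to be the main obstacle is the identification of $\mathcal{H}(\overline{G})$ — verifying that the blow-up leaves the primitive ideal space intact, i.e.\ that every hereditary subset of $\overline{G}$ is a union of whole clusters indexed by an open subset of $X$; the rest is bookkeeping on top of Lemmas~\ref{l:primideal} and~\ref{l:ran}.
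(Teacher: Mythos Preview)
Your proposal is correct and follows essentially the same approach as the paper: start from the acyclic graph $F^{op}$ of Lemma~\ref{l:primideal}, replace each vertex $x$ by a cluster $C_{x}$ (a single point if $f(x)=-1$, the strongly connected graph of Lemma~\ref{l:ran} if $f(x)\geq 1$), wire clusters according to the edges of $F^{op}$, and then verify that the hereditary sets of the resulting amplified graph are exactly the cluster-unions indexed by open sets of $X$, so that the simple sub-quotient at $x$ is stably $C^{*}$ of the subgraph on $C_{x}$. The only cosmetic difference is that the paper links clusters by choosing a single representative $w_{v}\in E_{v}^{0}$ in each and adding edges only between these representatives, whereas you connect every vertex of $C_{x}$ to every vertex of $C_{y}$; both choices yield the same lattice of hereditary sets after amplification, so this is immaterial.
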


\begin{proof}
By Lemma \ref{l:primideal}, there exists a finite graph $H_{0}$ such that  $\text{Prim} ( C^{*} ( \overline{ H_{0} } ) ) \cong X$ and $\overline{H_{0}}^{0} = X$. Set $H = \overline{H_{0}}$.  Let $v$ be $H^{0}$.  If $f( v ) > 0$, then by Lemma \ref{l:ran}, there exists a strongly connected singular graph $E_{v}$ with finitely many vertices and no breaking vertices such that $\mathrm{rank} ( K_{0} ( C^{*} ( E_{v} ) ) ) = f(v)$ and $C^{*} ( E_{v} )$ is a purely infinite simple $C^{*}$-algebra.  If $f(v) < 0$, set $E_{v} = \{ v \}$.

For each $v \in H^{0}$, let $w_{v}$ be an element of $E_{v}^{0}$.  Define $E$ as follows $E^{0} = \bigcup_{ v \in H^{0} } E_{v}^{0}$, 
\begin{align*}
E^{1} = \left( \bigcup_{ v \in H^{0} } E_{v}^{1} \right) \cup \setof{ e( w_{v}, w_{z} )^{n} }{ \text{$n \in \N$, $\exists$ edge in $H$ from $v$ to $w$} }
\end{align*}
$s_{E} \vert_{ E_{v} } = s_{E_{v}}$, $s_{E} (  e( w_{v}, w_{z} )^{n} ) = w_{v}$, $r_{E} \vert_{ E_{v} } = r_{ E_{v}}$, and $r_{E} ( e( w_{v}, w_{z} )^{n} ) = w_{z}$.  Then $E \cong \overline{G}$ for some finite graph $G$.  

Define $\ftn{ \beta }{ \mathbb{O} ( X ) }{  \Her ( E ) }$ by $\beta ( U ) = \cup_{ v \in U }E^{0}_{v}$.  By the construction of $E$, we have that $\beta$ is a lattice isomorphism.  Thus, $\beta$ induces a homeomorphism $\ftn{ \widetilde{\beta} }{ X }{ \Prim ( C^{*} (E) ) }$ such that $C^{*} ( E ) [ \widetilde{\beta} (v)  ] \cong C^{*} ( E_{v} )$.  Set $\alpha = \widetilde{\beta}^{-1}$.  Then $\alpha$ is a homeomorphism.  Let $x \in \Prim ( C^{*} ( E ) )$ and let $v = \alpha ( x )$.  Thus, 
\begin{align*}
C^{*} (E) [ x ] = C^{*} ( E ) [ \widetilde{ \beta } ( v )  ] \cong C^{*} ( E_{v} ).
\end{align*}
Hence, $f \circ \alpha = \tau_{ C^{*} ( E ) }$
\end{proof}

As a consequence of the above theorem and our general classification result (Proposition \ref{p:apgraphclass} and Theorem \ref{t:unitalclass}), we have that every unital $C^{*}$-algebra in $\mathcal{C}_{ \mathrm{free} }$ with finitely generated $K$-theory is isomorphic to $C^{*} ( \overline{G} )$ for some finite graph $G$. 

\begin{corol}\label{c:unitalfree}
Let $\mathfrak{A}$ be a unital $C^{*}$-algebra in $\mathcal{C}_{\mathrm{free}}$ with $K_{0} ( \mathfrak{A} )$ finitely generated.  Then there exists a finite graph $G$ such that $\mathfrak{A} \cong C^{*} ( \overline{G} )$.  
\end{corol}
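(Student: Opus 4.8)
The plan is to combine the realisation result of Theorem \ref{t:range} with the uniqueness result of Theorem \ref{t:unitalclass}. Write $X = \Prim(\mathfrak{A})$. First I would note that, by Definition \ref{d:freeclass}(1), $X$ is a finite topological space; hence $\mathfrak{A}$ has only finitely many ideals, and every simple sub-quotient of $\mathfrak{A}$ is of the form $\mathfrak{A}[x]$ for some $x \in X$.

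The only point requiring a genuine argument is that $\tau_{\mathfrak{A}}$ takes values in $\{-1\} \cup \N$, the set of functions realised in Theorem \ref{t:range}. For this I would first check that $\mathrm{rank}(K_0(\mathfrak{A}[x])) < \infty$ for every $x \in X$. Choose a composition series $0 = \mathfrak{I}_0 \triangleleft \mathfrak{I}_1 \triangleleft \dots \triangleleft \mathfrak{I}_n = \mathfrak{A}$ whose successive quotients are the simple sub-quotients $\mathfrak{A}[x_1], \dots, \mathfrak{A}[x_n]$. Since each $\mathfrak{A}[x_j]$ lies in $\mathcal{C}$ or is stably isomorphic to $\K$, all of them have $K_1 = 0$ and free $K_0$; feeding the six-term sequences up the composition series then shows inductively that $K_1(\mathfrak{I}_j) = 0$ and hence that $0 \to K_0(\mathfrak{I}_{j-1}) \to K_0(\mathfrak{I}_j) \to K_0(\mathfrak{A}[x_j]) \to 0$ is exact. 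As the right-hand term is free these sequences split, so $K_0(\mathfrak{A}) \cong \bigoplus_{j=1}^n K_0(\mathfrak{A}[x_j])$, and finite generation of the left-hand side forces each summand to be finitely generated, hence of finite rank. Now each $\mathfrak{A}[x]$ is either purely infinite simple (when $\mathfrak{A}[x] \in \mathcal{C}$), in which case $K_0(\mathfrak{A}[x])_+ = K_0(\mathfrak{A}[x])$ and $\tau_{\mathfrak{A}}(x) = \mathrm{rank}(K_0(\mathfrak{A}[x])) \in \N$, or, by Remark \ref{r:elements}, equal to $\C$ (when unital and not in $\mathcal{C}$), or stably isomorphic — hence, being stable, equal — to $\K$ (when stable and not in $\mathcal{C}$), in which case $K_0(\mathfrak{A}[x])_+ \neq K_0(\mathfrak{A}[x]) = \Z$ and $\tau_{\mathfrak{A}}(x) = -1$.

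With this in hand I would apply Theorem \ref{t:range} to the finite space $X$ and the function $f = \tau_{\mathfrak{A}}$, obtaining a finite graph $G$ and a homeomorphism $\ftn{\alpha}{\Prim(C^*(\overline{G}))}{X}$ with $\tau_{\mathfrak{A}} \circ \alpha = \tau_{C^*(\overline{G})}$; this says precisely that $\primT(C^*(\overline{G})) \cong \primT(\mathfrak{A})$. Since $G$ is finite, $\overline{G}$ has finitely many vertices, so $C^*(\overline{G})$ is unital; being an amplified graph, $\overline{G}$ is singular with no breaking vertices, so $C^*(\overline{G}) \in \mathcal{C}_{\mathrm{free}}$ by Proposition \ref{p:apgraphclass}, and it has finitely many ideals. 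Thus $\mathfrak{A}$ and $C^*(\overline{G})$ are unital members of $\mathcal{C}_{\mathrm{free}}$ with finitely many ideals whose invariants $\ouri(\cdot)$ agree, and Theorem \ref{t:unitalclass} yields $\mathfrak{A} \cong C^*(\overline{G})$.

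The only real obstacle is the middle paragraph — pinning down the invariant — and even there the work reduces to the observation that $K_0(\mathfrak{A})$ splits as the direct sum of the $K_0$-groups of the simple sub-quotients; once that is noted, the hypothesis on $K_0(\mathfrak{A})$ does its job and the rest is an appeal to Theorems \ref{t:range} and \ref{t:unitalclass}.
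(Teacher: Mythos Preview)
Your argument is correct and follows the same route as the paper: verify that $\tau_{\mathfrak{A}}$ lands in $\{-1\}\cup\N$, invoke Theorem~\ref{t:range} to realise the invariant by some $C^*(\overline{G})$, and then conclude via Proposition~\ref{p:apgraphclass} and Theorem~\ref{t:unitalclass}. The only difference is that you spell out, via a composition series and the six-term sequences, why $K_0(\mathfrak{A})\cong\bigoplus_{x\in X}K_0(\mathfrak{A}[x])$, whereas the paper simply asserts this isomorphism and moves on.
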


\begin{proof}
Note that $\Prim ( \mathfrak{A} )$ is finite.  Since $\mathfrak{A}$ is finitely generated and 
\[
	K_{0} ( \mathfrak{A} ) \cong \bigoplus_{ x \in \Prim ( \mathfrak{A} ) } K_{0} ( \mathfrak{A} [ x ] ),
\]
$K_{0} ( \mathfrak{A} [x ] )$ is finitely generated for all $x \in \Prim ( \mathfrak{A} )$.   Thus $\tau_{ \mathfrak{A} } ( x ) \in \{ -1 \} \cup \N$.  By Theorem \ref{t:range}, there exists a finite graph $G$ such that $\ouri ( \mathfrak{A} ) \cong \ouri ( C^{*} ( \overline{G} ) )$.  Hence, by Proposition \ref{p:apgraphclass} and Theorem \ref{t:unitalclass}, $\mathfrak{A} \cong C^{*} ( \overline{G} )$. 
\end{proof}

Using our general classification result and our range result, we can achieve a permanence result for extensions of graph algebras associated to amplified graphs.

\begin{corol}\label{c:extensions}
Let $G_{1}$ and $G_{2}$ be a finite graphs.  If $\mathfrak{A}$ is a unital $C^{*}$-algebra and $\mathfrak{A}$ fits into the following exact sequence
\begin{align*}
0 \to C^{*}( \overline{G_{1}} ) \otimes \K \to \mathfrak{A} \to C^{*} ( \overline{G_{2}} ) \to 0
\end{align*}
then $\mathfrak{A} \in \mathfrak{C}_{ \mathrm{free} }$.  Consequently, there exists a finite graph $G$ such that $\mathfrak{A} \cong C^{*} ( \overline{G} )$.
\end{corol}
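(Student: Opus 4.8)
The plan is to verify that $\mathfrak{A}$ satisfies the three conditions of Definition~\ref{d:freeclass}, and then to invoke Corollary~\ref{c:unitalfree}. Write $\mathfrak{I} = C^{*}(\overline{G_{1}}) \otimes \K$ for the ideal, so that $\mathfrak{A}/\mathfrak{I} \cong C^{*}(\overline{G_{2}})$. Since amplified graphs are singular and have no breaking vertices, Proposition~\ref{p:apgraphclass} gives $C^{*}(\overline{G_{1}}), C^{*}(\overline{G_{2}}) \in \mathcal{C}_{\mathrm{free}}$, and then Proposition~\ref{p:elements}(3) gives $\mathfrak{I} \in \mathcal{C}_{\mathrm{free}}$ as well.

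First I would check that $\Prim(\mathfrak{A})$ is finite. As a set it is the disjoint union of the open subset $\Prim(\mathfrak{I})$ and the closed subset $\Prim(\mathfrak{A}/\mathfrak{I})$; the former is homeomorphic to $\Prim(C^{*}(\overline{G_{1}}))$ and the latter equals $\Prim(C^{*}(\overline{G_{2}}))$, both of which are finite since the graphs have finitely many vertices, hence finitely many hereditary subsets. Next I would identify the simple sub-quotients: by Lemma~2.15 of \cite{rmrn:bootstrap} one has $\mathfrak{A}[x] = \mathfrak{I}[x]$ for $x \in \Prim(\mathfrak{I})$ and $\mathfrak{A}[x] = (\mathfrak{A}/\mathfrak{I})[x]$ for $x \in \Prim(\mathfrak{A}/\mathfrak{I})$. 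Applying Proposition~\ref{p:elements}(1) to $\mathfrak{I}$ and to $\mathfrak{A}/\mathfrak{I}$ shows that each such sub-quotient lies in $\mathcal{C}_{\mathrm{free}}$; then Proposition~\ref{p:elements}(1) applied to $\mathfrak{A}$ (whose primitive ideal space we have just shown to be finite) yields $\mathfrak{A} \in \mathcal{C}_{\mathrm{free}}$. In particular condition~(3) of Definition~\ref{d:freeclass} is automatic here: the sub-quotients coming from $\mathfrak{I}$ are stable, hence never unital, while the unital ones come from $C^{*}(\overline{G_{2}})$, where the condition is already known.

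For the concluding assertion it remains to see that $K_{0}(\mathfrak{A})$ is finitely generated, so that Corollary~\ref{c:unitalfree} applies. The six-term exact sequence in $K$-theory associated to $0 \to \mathfrak{I} \to \mathfrak{A} \to \mathfrak{A}/\mathfrak{I} \to 0$ exhibits $K_{0}(\mathfrak{A})$ as an extension of a subgroup of $K_{0}(C^{*}(\overline{G_{2}}))$ by a quotient of $K_{0}(\mathfrak{I}) \cong K_{0}(C^{*}(\overline{G_{1}}))$; since $K_{0}(C^{*}(\overline{G_{i}})) \cong \bigoplus_{G_{i}^{0}} \Z$ is finitely generated for $i = 1,2$, so is $K_{0}(\mathfrak{A})$. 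Corollary~\ref{c:unitalfree} then produces a finite graph $G$ with $\mathfrak{A} \cong C^{*}(\overline{G})$.

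I do not expect a genuine obstacle here: this corollary is essentially an assembly of Propositions~\ref{p:apgraphclass} and \ref{p:elements} with Corollary~\ref{c:unitalfree}. The only points requiring a little care are ordering the argument so that the finiteness of $\Prim(\mathfrak{A})$ is established before Proposition~\ref{p:elements}(1) is invoked on $\mathfrak{A}$, and making sure the decomposition of $\Prim(\mathfrak{A})$ into an ideal part and a quotient part is compatible with the identification of the simple sub-quotients -- both being standard facts about $C^{*}$-algebras over a finite space.
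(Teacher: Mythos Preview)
Your proposal is correct and follows essentially the same route as the paper: establish that $\Prim(\mathfrak{A})$ is finite by decomposing it into the ideal and quotient parts, identify each simple sub-quotient with one of $\mathfrak{I}$ or $\mathfrak{A}/\mathfrak{I}$, invoke Proposition~\ref{p:elements}(1), and finish with Corollary~\ref{c:unitalfree}. The only cosmetic difference is that the paper asserts $K_{0}(\mathfrak{A}) \cong K_{0}(C^{*}(\overline{G_{1}})) \oplus K_{0}(C^{*}(\overline{G_{2}}))$ outright (using that both $K_{1}$-groups vanish and the resulting short exact sequence splits), whereas you argue finite generation more conservatively from the six-term sequence without invoking the splitting; both reach the same conclusion.
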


\begin{proof}
Note that by Proposition \ref{p:apgraphclass} and Proposition \ref{p:elements}, $C^{*} ( \overline{G_{1}} ) \otimes \K$ and $C^{*} ( \overline{G_{2}} )$ are elements of $\mathcal{C}_{ \mathrm{free} }$.  

Set $X = \Prim ( \mathfrak{A} )$.  Let $U$ be an open subset of $X$ such that $\mathfrak{A} [ U ] \cong C^{*}( \overline{G_{1}} ) \otimes \K$ and $\mathfrak{A} [ X \setminus U ] \cong C^{*} ( \overline{G_{2}} )$.  Set $Y = X \setminus U$.  Since $\Prim ( C^{*} ( G_{i} ) )$ is finite, $U$ and $Y$ are finite.  Hence, $X$ is finite.

Since $\mathfrak{A} [ U ]$ is a tight $C^{*}$-algebra over $U$ and $\mathfrak{A} [ Y ]$ is a tight $C^{*}$-algebra over $X$, there exist homeomorphisms $\ftn{ \beta_{U} }{ \Prim ( C^{*} ( G_{1} ) \otimes \K ) }{ U }$ and $\ftn{ \beta_{Y} }{ \Prim ( C^{*}( G_{2} ) ) }{ Y }$. Let $x \in \Prim ( \mathfrak{A} )$.  Then $x \in U$ or $x \in X \setminus U$.  If $x \in U$, then $\mathfrak{A} [ x ] \cong ( C^{*} ( G_{1} ) \otimes \K) [ \beta_{U}^{-1}(x) ]$ and if $x \in X \setminus U$, then $\mathfrak{A} [ x ] \cong C^{*} ( G_{1} ) [ \beta_{Y}^{-1}(x) ]$.    Hence, by Proposition \ref{p:elements}, $\mathfrak{A} [ x ]$ are elements of $\mathcal{C}_{ \mathrm{free} }$.  Thus, by Proposition \ref{p:elements}, $\mathfrak{A} \in \mathcal{C}_{ \mathrm{free} }$.  

The last part of the statement follows from Corollary \ref{c:unitalfree} since $K_{0} ( \mathfrak{A} ) \cong K_{0} ( C^{*} ( \overline{G_{1} } ) ) \oplus K_{0} ( C^{*} ( \overline{G_{2}} ) )$ which implies $K_{0} ( \mathfrak{A} )$ is finitely generated.
\end{proof}

\section{Acknowledgement}

The second author wishes to thank the Department of Mathematical Sciences at the University of Copenhagen for their support and hospitality during his visit in the spring of 2010, during which some of this research were initiated.  

This research was supported by the Danish National Research Foundation (DNRF) through the
Centre for Symmetry and Deformation. Support was also provided by the NordForsk Research
Network ``Operator Algebras and Dynamics'' (grant \#11580).

% \bibliographystyle{siam}
% \bibliography{/Users/efrenruiz/references}

\begin{thebibliography}{10}

\bibitem{ABK}
{\sc S.~Arklint, R.~Bentmann and T.~Katsura}, {\em The range of ideal
related K-theory for graph algebras}, in preparation.

\bibitem{bhrs:idealstructure}
{\sc T.~Bates, J.~Hong, I.~Raeburn and W.~Szyma{\'n}ski}, {\em The ideal structure of the {$C^*$}-algebras of infinite graphs}, Illinois J. Math., 46 (2002), pp.~1159--1176.

\bibitem{tbdp:flow}
{\sc T.~Bates and D.~Pask}, {\em Flow equivalence of graph algebras}, Ergodic
  Theory Dynam. Systems, 24 (2004), pp.~367--382.

\bibitem{dhs:strgrphalg}
{\sc K.~Deicke, J.~H. Hong, and W.~Szyma{\'n}ski}, {\em Stable rank of graph
  algebras. {T}ype {I} graph algebras and their limits}, Indiana Univ. Math.
  J., 52 (2003), pp.~963--979.

\bibitem{ddmt_kthygraph}
{\sc D.~Drinen and M.~Tomforde}, {\em Computing {$K$}-theory and {${\rm Ext}$}
  for graph {$C^*$}-algebras}, Illinois J. Math., 46 (2002), pp.~81--91.

\bibitem{ddmt_arbgraph}
\leavevmode\vrule height 2pt depth -1.6pt width 23pt, {\em The {$C\sp
  *$}-algebras of arbitrary graphs}, Rocky Mountain J. Math., 35 (2005),
  pp.~105--135.


\bibitem{EKTW}
{\sc S.~Eilers, T.~Katsura, M.~Tomforde, and J.~West}, {\em The ranges of
 {$K$}-theoretical invariants for nonsimple graph algebras}.
\newblock In preparation.


\bibitem{ELPmorph}
{\sc S.~Eilers, T.~A. Loring, and G.~K. Pedersen}, {\em Morphisms of extensions
  of {$C\sp *$}-algebras: pushing forward the {B}usby invariant}, Adv. Math.,
  147 (1999), pp.~74--109.

\bibitem{segr:rccconi}
{\sc S.~Eilers and G.~Restorff}, {\em On {R\o rdam's} classification of certain
 {$C^*$}-algebras with one nontrivial ideal}, in Operator algebras: The Abel
  symposium 2004, no.~1 in Abel Symposia, Springer-Verlag, 2006, pp.~87--96.


\bibitem{segrer:ccfis}
{\sc S.~Eilers, G.~Restorff, and E.~Ruiz}, {\em Classifying {$C^*$}-algebras
  with both finite and infinite subquotients}.
\newblock Submitted.

\bibitem{segrer:okfe}
\leavevmode\vrule height 2pt depth -1.6pt width 23pt, {\em The ordered
  {$K$}-theory of a full extension}.
\newblock Preprint, ArXiV: 1106.1551.

\bibitem{segrer:scecc}
\leavevmode\vrule height 2pt depth -1.6pt width 23pt, {\em Strong
  classification of extensions of classifiable {$C^*$}-algebras}.
\newblock In preparation.

\bibitem{ERRshift}
\leavevmode\vrule height 2pt depth -1.6pt width 23pt, {\em Classification of
  extensions of classifiable {$C^\ast$}-algebras}, Adv. Math., 222 (2009),
  pp.~2153--2172.

\bibitem{ERRlinear}
\leavevmode\vrule height 2pt depth -1.6pt width 23pt, {\em On graph
  {$C^*$}-algebras with a linear ideal lattice}, Bull. Malays. Math. Sci. Soc.
  (2), 33 (2010), pp.~233--241.


\bibitem{semt_classgraphalg}
{\sc S.~Eilers and M.~Tomforde}, {\em On the classification of nonsimple graph
  {$C^*$}-algebras}, Math. Ann., 346 (2010), pp.~393--418.

\bibitem{af}
{\sc G.~A. Elliott}, {\em On the classification of inductive limits of
  sequences of semisimple finite-dimensional algebras}, J. Algebra, 38 (1976),
  pp.~29--44.

\bibitem{kirchpure}
{\sc E.~Kirchberg}, {\em Das nicht-kommutative {M}ichael-{A}uswahlprinzip und
  die {K}lassifikation nicht-einfacher {A}lgebren}, in $C\sp *$-algebras
  (M\"unster, 1999), Springer, Berlin, 2000, pp.~92--141.

\bibitem{HL_unitaryequiv}
{\sc H.~Lin}, {\em Unitary equivalences for essential extensions of
  {$C^\ast$}-algebras}, Proc. Amer. Math. Soc., 137 (2009), pp.~3407--3420.

\bibitem{rmrn:bootstrap}
{\sc R.~Meyer and R.~Nest}, {\em {$C^*$}-algebras over topological spaces: the
  bootstrap class}, M\"unster J. Math., 2 (2009), pp.~215--252.

\bibitem{NgCFP}
{\sc P.~W. Ng}, {\em The corona factorization property}, in Operator theory,
  operator algebras, and applications, vol.~414 of Contemp. Math., Amer. Math.
  Soc., Providence, RI, 2006, pp.~97--110.

\bibitem{phillipspureinf}
{\sc N.~C. Phillips}, {\em A classification theorem for nuclear purely infinite
  simple {$C\sp *$}-algebras}, Doc. Math., 5 (2000), pp.~49--114 (electronic).

\bibitem{gr:ckalg}
{\sc G.~Restorff}, {\em Classification of {C}untz-{K}rieger algebras up to
  stable isomorphism}, J. Reine Angew. Math., 598 (2006), pp.~185--210.

\bibitem{grer:rccconiII}
{\sc G.~Restorff and E.~Ruiz}, {\em On {R\o rdam's} classification of certain
 {$C^*$}-algebras with one nontrivial ideal II}, Math. Scand., 101 (2007),
pp.~280--292.

\bibitem{RorStableRev}
{\sc M.~R{\o}rdam}, {\em Stable {$C\sp *$}-algebras}, in Operator algebras and
  applications, vol.~38 of Adv. Stud. Pure Math., Math. Soc. Japan, Tokyo,
  2004, pp.~177--199.

\bibitem{apwsalafranks}
{\sc A.P.W.~S{\o}rensen}, {\em Geometric classification of simple graph
algebras}, in preparation.

\bibitem{mt:orderkthy}
{\sc M.~Tomforde}, {\em The ordered {$K_0$}-group of a graph {$C^*$}-algebra},
  C. R. Math. Acad. Sci. Soc. R. Can., 25 (2003), pp.~19--25.


\end{thebibliography}

\end{document}